\documentclass[a4paper]{amsart}
\input xypic
\input xy \xyoption{all}
\usepackage{bbm}
\usepackage{amssymb,graphicx,float,fancyhdr,tikz,dsfont,enumerate,mathtools,booktabs,blkarray,multirow}
\usepackage{amsmath}
\usepackage{graphicx}
\usetikzlibrary{matrix}

\makeindex

\oddsidemargin 0.200 true in
\evensidemargin 0.200 true in
\marginparwidth 0.75 true in
\topmargin -0.5 true in
\textheight 9 true in
\textwidth 6.0 true in

\newtheorem{theorem}{Theorem}[section]
\newtheorem{proposition}[theorem]{Proposition}
\newtheorem{lemma}[theorem]{Lemma}

\theoremstyle{definition}

\newtheorem{rmk}[theorem]{Remark}

\def\multiset#1#2{\ensuremath{\left(\kern-.3em\left(\genfrac{}{}{0pt}{}{#1}{#2}\right)\kern-.3em\right)}}
\def\supermultiset#1#2{\ensuremath{\:\left(\kern-.5em\left(\genfrac{}{}{0pt}{}{#1}{#2}\right)\kern-.5em\right)\:}}

\DeclareMathOperator\Map{\mathrm{Map}}

\begin{document}
\title{The cohomology of free loop spaces of rank 2 flag manifolds}

\author{Matthew I. Burfitt}
\address{\scriptsize{School of Mathematics, University of Southampton,Southampton SO17 1BJ, United Kingdom}}  
\email{M.Burfitt@soton.ac.uk}

\author{Jelena Grbi\'c} 
\address{\scriptsize{School of Mathematics, University of Southampton,Southampton SO17 1BJ, United Kingdom}}  
\email{J.Grbic@soton.ac.uk} 

\subjclass[2010]{}
\keywords{} 
\thanks{Research supported in part by The Leverhulme Trust Research Project Grant  RPG-2012-560.}

\begin{abstract}
A complete flag manifold is the quotient of a Lie group by its maximal torus. The rank of a flag manifold is the dimension of the maximal torus of the Lie group. The rank 2 complete flag manifolds are $SU(3)/T^2$, $Sp(2)/T^2$, $Spin(4)/T^2$, $Spin(5)/T^2$ and $G_2/T^2$. In this paper we calculate the cohomology of the free loop space of rank 2 complete flag manifolds.  \end{abstract}

\maketitle

\section{Introduction}

One of the most influential problems in topology and geometry is the study of geodesics on Riemannian manifolds. The geodesic problem refers to finding geodesics connecting two given points of a Riemannian manifold or to finding periodic geodesics, and to giving information regarding their count. The study of geodesics originated with the works of Hadamard~\cite{Hadamard1898} and Poincar\' e~\cite{MR1500710} and with substantial early contributions by Birkhoff~\cite{MR1501070}, Morse~\cite{MR1451874}, and Lusternik and Schnirelman~\cite{MR0029532}. The most important offspring of this problem is the development of topological methods in variational calculus, generally referred to as Morse theory~\cite{MR1001450}. Recently Floer theory developed as a central tool in modern symplectic topology taking its inspiration from the study of geodesics. The geodesic problem also led to the development of computational tools in algebraic topology (spectral sequences), and is connected to the theory of minimal models and to Hochschild and cyclic homology.

One of the most natural starting points in the study of the geodesic problem is the study of spaces of paths and loops on manifolds. In recent years, these spaces have been the object of much interest in topology, symplectic geometry and theoretical physics.  The \textit{free loop space} $\Lambda X$ of a topological space $X$ is defined to be the mapping space $\Map(S^1,X)$, the space of all non-pointed maps from the circle to $X$.

Given a Riemannian manifold $(M,g)$, the closed geodesics parametrised by $S^1$ are the critical points of the energy functional
\[
E\colon \Lambda M\to \mathbb R, \quad E(\gamma):= \frac{1}{2}\int_{S^1} ||\dot{\gamma}(t)||^2 dt.
\]
Morse theory applied to the energy functional $E$ gives a description of the loop space $\Lambda M$ by successive attachments of bundles over the critical submanifolds with rank given by the index of the Hessian $d^2E$. This allows a grip on the topology of $\Lambda M$ provided one has enough information on these indices and on the attaching maps. Conversely, knowledge of the topology of $\Lambda M$ implies existence results for critical points of $E$.

One of the most powerful results in this direction is due to Gromoll-Meyer~\cite{MR0264551} who proved that when $M$ is a simply connected closed manifold such that the sequence $\{b_k(\Lambda M)\}, k\geq 0$ of Betti numbers of  $\Lambda M$ with coefficients in some field is unbounded, then for any Riemannian metric on $M$ there exist infinitely many geometrically distinct closed geodesics.
 
 A distinctive subspace of $\Lambda M$ is the \textit{based loops space} $\Omega X=\Map_*(S^1,X)$, the space of all pointed maps from the circle to $X$. The based loop space functor is an important classical object in algebraic topology and has been well studied. The topology of free loop spaces is much less well behaved and it is still only well understood in a handful of examples. In particular, the cohomology of free loop spaces such as spheres, $n$-dimensional projective spaces and Lie groups.

The starting point for the topological study of the free loop space  $\Lambda M$ is the evaluation fibration 
 \[
\Omega M\to \Lambda M\stackrel{ev}{\to} M
\]
where  $ev$ is the evaluation at the origin of a loop, and  $\Omega M$ is the based loop space, consisting of loops starting and ending at a fixed basepoint in $M$. This fibration can be used to determine the homotopy groups of  $\Lambda M$, that is, $\pi_k(\Lambda M)\cong \pi_k(M)\oplus\pi_k(\Omega M)$: the section given by the inclusion of constant loops determines a splitting of the homotopy long exact sequence. However, the situation is very different as far as homology groups are concerned. It turns out that the Leray-Serre spectral sequence is effective in simple cases (spheres~\cite{MR1048178, Dinh}) but of very limited use in general, unless one has additional geometric information about the differentials. In contrast to the evaluation fibration, the path-loop fibration $\Omega M\to PM\to M$ has been successfully used to study $\Omega M$ due to it having the contractible total space $PM$. For example, the author and Terzi\' c~\cite{MR2722368, MR3076855} calculate the integral Pontryagin homology ring of flag manifolds and of generalised  symmetric spaces. Any successful reasoning in the study of free loop spaces must use specific features of the evaluation fibration.

In this paper we explore the cohomology of the free loop space of homogeneous spaces. In doing so we uncover some surprising combinatorial connections and explicitly compute the cohomology of the free loop space of flag manifolds of rank 2 simple Lie groups.

A compact connected Lie group is called simple if it is non-abelian, simply connected and has no non-trivial connected normal subgroups. The only compact connected simple Lie groups are $Spin(m)$, $SU(n)$, $Sp(n)$, $G_2$, $F_4$, $E_6$, $E_7$, $E_8$ for $n\geq 1$ and $m\geq 2$. In this paper we specialise to rank 2 simple Lie groups $SU(3)$, $Sp(2)$, $Spin(4)$, $Spin(5)$ and $G_2$.
In low dimensions, there are isomorphisms among the classical Lie groups called accidental isomorphisms, identifying certain simple Lie groups of rank 2 such as $Spin(4)\cong SU(2)\times SU(2)$ and $Spin(5)\cong Sp(2)$. 
Therefore, in this paper we focus on the cohomology of the free loop spaces on $SU(3)/T^2$, $Sp(2)/T^2$ and $G_2/T^2$.

\section{Background}

		\subsection{Gr\"{o}bner bases}		
		
		Gr\"{o}bner basis provide us with powerful methods to perform computations in commutative algebra
		particularly with respect to ideals, although their application extend far beyond such calculations.
		In this subsection we briefly describe the Gr\"{o}bner basis theory to be used later in this paper,
		for more information and proofs see \cite{Grobner2} or \cite{Grobner1}.
		We state all results over Euclidean or principle ideal domain $R$;
		in the paper we will only consider the case $R=\mathbb{Z}$ for which all results hold.
		The theory of Gr\"{o}bner basis can be generalised to other rings and stronger results can be recovered over fields.
		
		Let $R$ be a ring. Given a finite subset $A$ of $R[x_1,\dots,x_n]$, we denote by $\langle A\rangle$ the ideal generated by elements of $A$.
		Fix a monomial ordering on the polynomial ring $R[x_1,\dots,x_n]$.
		For $f,g,p\in R[x_1,\dots,x_n]$, $g$ is said to be {\it reduced} from $f$ by $p$
		if there exists a term $m$ in $f$ such that the leading term of $p$ divides $m$ and $g=f-m'p$
		for some monomial $m'\in R[x_1,\dots,x_n]$.
		
		Let $R$ be a principle ideal domain and let $G$ be a finite subset of $R[x_1,\dots,x_n]$.
		Then $G$ is a {\it Gr\"{o}bner basis} if all elements of $\langle G\rangle$ can be reduced to zero by elements of $G$.
		
		A ring $R$ is called {\it computable} if for any pairs of ring elements there exists both
		a division algorithm and a euclidean algorithm. 
		
		\begin{theorem}[\cite{Grobner1}]
		    Let $R$ be a computable principle ideal domain and
		    fix a monomial order on $R[x_1,\dots,x_n]$.
		    For any ideal in $R[x_1,\dots,x_n]$, there exists a Gr\"{o}bner basis.
		    In particular, for finite $A\subseteq R[x_1,\dots,x_n]$ there is an algorithm
		    to obtain Gr\"{o}bner basis $G$ such that $\langle G\rangle=\langle A\rangle$. 
		\end{theorem}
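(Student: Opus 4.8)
The plan is to run a version of Buchberger's algorithm adapted to a principal ideal domain. I would first reduce the statement to a Buchberger-type criterion. For $f,g\in R[x_1,\dots,x_n]$ with leading terms $a\,m$ and $b\,n$, where $a,b\in R$ and $m,n$ are monomials, set $\ell=\mathrm{lcm}(m,n)$, let $d=\gcd(a,b)$ with a B\'ezout identity $d=ua+vb$ (available since $R$ is a computable principal ideal domain), and form the S-polynomial $S(f,g)=\tfrac{b}{d}\,\tfrac{\ell}{m}\,f-\tfrac{a}{d}\,\tfrac{\ell}{n}\,g$, whose leading terms cancel, and the G-polynomial $G(f,g)=u\,\tfrac{\ell}{m}\,f+v\,\tfrac{\ell}{n}\,g$, whose leading term is $d\,\ell$. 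I would then prove that a finite $G\subseteq R[x_1,\dots,x_n]$ is a Gr\"obner basis if and only if every $S(g_i,g_j)$ and every $G(g_i,g_j)$ with $g_i,g_j\in G$ reduces to zero by $G$. The nontrivial direction uses the usual leading-term argument: given $h\in\langle G\rangle$, write $h=\sum_k p_k g_k$; if the largest monomial occurring among the $\mathrm{lt}(p_k g_k)$ strictly exceeds that of $h$, the corresponding leading terms cancel, and over a principal ideal domain the module of such leading-term syzygies is generated precisely by the syzygies underlying the S- and G-polynomials, so one may rewrite $h$ with a strictly smaller leading monomial among the summands; iterating terminates because the monomial order well-orders the monomials, and forces $\mathrm{lt}(h)$ to be divisible by some $\mathrm{lt}(g_k)$.

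Next I would spell out the algorithm. Start with $G:=A$; repeatedly compute the polynomials $S(g_i,g_j)$ and $G(g_i,g_j)$ for pairs in $G$ and reduce each of them modulo $G$. Both the reductions and the B\'ezout coefficients needed for the G-polynomials are carried out by the division and euclidean algorithms, which exist because $R$ is computable. If some reduction produces a nonzero remainder $r$, adjoin $r$ to $G$ and repeat; otherwise halt and return $G$. When the procedure halts, the criterion of the previous paragraph certifies that $G$ is a Gr\"obner basis, and the equality $\langle G\rangle=\langle A\rangle$ is preserved at every stage, since each adjoined remainder already lies in the ideal generated by the current $G$ and reduction never leaves that ideal.

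The crux, and the step I expect to be the main obstacle, is termination. Over a field one argues that a nonzero reduced remainder has leading monomial outside the monomial ideal generated by the leading monomials of $G$, yielding a strictly ascending chain of monomial ideals that must stabilise. Over a general principal ideal domain this breaks down: the remainder may be irreducible merely because its leading coefficient fails to be divisible by the leading coefficient of any single $g\in G$, even though its leading monomial is already present, so the monomial leading-term ideal need not grow. The remedy is to pass to the finer invariant, the ideal $L(G)=\langle\,\mathrm{lt}(g):g\in G\,\rangle\subseteq R[x_1,\dots,x_n]$ generated by the full leading terms (coefficients included). Since a principal ideal domain is Noetherian, the Hilbert basis theorem makes $R[x_1,\dots,x_n]$ Noetherian; combining the resulting ascending chain condition on the $L(G_k)$ with Dickson's lemma for the underlying monomials and the ascending chain condition on the coefficient ideals attached to each fixed monomial, one shows that only finitely many remainders can ever be adjoined. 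Putting together the criterion, the algorithm, the invariance of $\langle G\rangle$, and this termination argument gives the stated algorithm for finitely generated ideals; the assertion for an arbitrary ideal then follows because $R[x_1,\dots,x_n]$ is Noetherian, so every ideal admits a finite generating set to which the algorithm applies.
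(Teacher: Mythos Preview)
The paper does not supply its own proof of this theorem: it is quoted as background from \cite{Grobner1}, with only the remark that the relevant procedure is Buchberger's algorithm. There is therefore no paper-side argument to compare against.

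Your outline is the standard route and is essentially correct. Two small points are worth tightening. First, in your termination paragraph you invoke both the Noetherianity of $R[x_1,\dots,x_n]$ (so the chain $L(G_0)\subsetneq L(G_1)\subsetneq\cdots$ of leading-term ideals stabilises) and, separately, Dickson's lemma together with the ascending chain condition on coefficient ideals; in fact the first of these already suffices, since each adjoined remainder has leading term outside $L(G_k)$ by construction, forcing a strict inclusion at every step. The Dickson/coefficient-ideal argument is an alternative, not an additional ingredient. Second, you should verify that a nonzero remainder $r$ obtained by reduction modulo $G$ really has $\mathrm{lt}(r)\notin L(G)$: this uses that reduction, as defined in the paper, allows subtraction only when the leading term of some $g\in G$ divides a term of the current polynomial, so if $r$ is irreducible then in particular no $\mathrm{lt}(g)$ divides $\mathrm{lt}(r)$; over a PID one must also check that $\mathrm{lt}(r)$ is not an $R[x_1,\dots,x_n]$-combination of the $\mathrm{lt}(g)$, which is exactly what the G-polynomial closure guarantees once those reduce to zero. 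With those clarifications your argument goes through.
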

		
		The most efficient algorithm is known as a the Buchberger algorithm and can easily be implemented by a computer.
		
		\begin{theorem}[\cite{Grobner1}]\label{thm:GrobnerOver}
		    Let $R$ be a Euclidean domain with unique remainders and let $G$ be a Gr\"{o}bner basis in $R[x_1,\dots,x_n]$.
		    Then all elements in $R[x_1,\dots,x_n]$ reduce to a unique representative in $R[x_1,\dots,x_n]/\langle G\rangle$.
		\end{theorem}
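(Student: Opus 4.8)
The plan is to prove two things: (i) \emph{existence} --- every coset of $\langle G\rangle$ contains a polynomial that cannot be reduced further by $G$, call it a \emph{normal form}; and (ii) \emph{uniqueness} --- the normal form of a coset is unique. Throughout, a reduction step on a term $d\,x^{\beta}$ of the current polynomial by $p\in G$ with $\mathrm{LM}(p)\mid x^{\beta}$ will mean: subtract the monomial multiple of $p$ whose leading term is $q\,x^{\beta}$, where $q$ is the (unique, by the hypothesis on $R$) Euclidean quotient of $d$ by $\mathrm{LC}(p)$. Such a step replaces the coefficient $d$ by its canonical remainder modulo $\mathrm{LC}(p)$ and introduces only monomials strictly below $x^{\beta}$.

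For (i), start from $f$ and apply reduction steps until none is possible. Each step either deletes a monomial or strictly shrinks a coefficient (in the canonical sense above) while producing only strictly smaller monomials, so an infinite chain of steps would yield an infinite strictly decreasing chain of monomials, contradicting that the monomial order is a well-order. Hence the process terminates at a normal form $r$, and $f-r\in\langle G\rangle$ by construction, so every coset meets the set of normal forms.

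For (ii), I would reduce to the following lemma, which is exactly where the Gr\"obner hypothesis ``every element of $\langle G\rangle$ reduces to $0$'' is used essentially: if $0\neq h\in\langle G\rangle$ then $\mathrm{LT}(g)$ divides $\mathrm{LT}(h)$ for some $g\in G$. Granting this, let $r_{1},r_{2}$ be normal forms of the same coset and put $h:=r_{1}-r_{2}\in\langle G\rangle$. If $h\neq 0$, let $x^{\alpha}=\mathrm{LM}(h)$ and choose $g$ as in the lemma, so $\mathrm{LM}(g)\mid x^{\alpha}$ and $\mathrm{LC}(g)$ divides the difference $c_{1}-c_{2}$ of the coefficients of $x^{\alpha}$ in $r_{1}$ and $r_{2}$. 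Then $c_{1}\equiv c_{2}$ modulo $\mathrm{LC}(g)$; but since $r_{1},r_{2}$ are normal forms and $\mathrm{LM}(g)\mid x^{\alpha}$, each of $c_{1},c_{2}$ equals its own canonical remainder modulo $\mathrm{LC}(g)$, so uniqueness of remainders forces $c_{1}=c_{2}$, whence $\mathrm{LC}(h)=0$, a contradiction. Therefore $r_{1}=r_{2}$.

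Proving the lemma is the step I expect to be the main obstacle. Over a field it is immediate: a reduction of $h$ to $0$ must eventually cancel the leading term, and the only steps that can touch it use some $g$ with $\mathrm{LM}(g)\mid\mathrm{LM}(h)$, whose leading coefficient is automatically a unit. Over a Euclidean domain this fails termwise, since a single step only lowers a coefficient to its canonical remainder, so one must pass through gcd considerations. Concretely, I would set $S=\{\mathrm{LC}(g):g\in G,\ \mathrm{LM}(g)\mid\mathrm{LM}(h)\}$ and $d=\gcd S$; following the coefficient of $\mathrm{LM}(h)$ through a reduction of $h$ to $0$ shows $d\mid\mathrm{LC}(h)$, because that coefficient only ever changes by subtracting multiples of elements of $S$. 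A B\'ezout combination of suitable monomial multiples of the relevant generators produces $w\in\langle G\rangle$ with leading term $d\cdot\mathrm{LM}(h)$; applying the same analysis to a reduction of $w$ to $0$ (which exists, again by the Gr\"obner hypothesis) forces the first generator that genuinely acts on $\mathrm{LM}(w)$ to have leading coefficient a unit multiple of $d$, and hence leading term dividing $\mathrm{LT}(h)$. Carrying out this coefficient bookkeeping carefully --- which is precisely why both ``Euclidean'' and ``unique remainders'' appear in the hypotheses --- is the real content of the statement; the rest is the standard termination argument.
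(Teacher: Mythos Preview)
The paper does not supply its own proof of this theorem: it is quoted as background from the reference \cite{Grobner1} and left unproved, so there is nothing in the paper to compare your argument against.

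On its own merits, your outline is the standard one and is essentially correct. One point worth flagging: the notion of reduction you use (only the leading \emph{monomial} of $p$ need divide $x^{\beta}$, and the coefficient is replaced by its canonical Euclidean remainder modulo $\mathrm{LC}(p)$) is strictly finer than the notion literally written in the paper's background section (where the leading \emph{term} of $p$ must divide the term being reduced). The uniqueness statement genuinely requires your finer notion --- with the paper's coarser definition, already over $R=\mathbb{Z}$ with $G=\{3\}$ the elements $2$ and $5$ are both irreducible representatives of the same coset --- so your choice is the right one, and it is what the cited reference actually uses. Your lemma (some $g\in G$ has $\mathrm{LT}(g)\mid\mathrm{LT}(h)$ whenever $0\neq h\in\langle G\rangle$) together with its gcd/B\'ezout justification is exactly the ``strong Gr\"obner basis'' characterisation over a PID, and the coefficient-tracking argument you sketch for it is the standard one; once that lemma is in hand, your existence and uniqueness steps go through as written.
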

		
		In particular, Gr\"{o}bner basis can be used to compute the intersection of ideals
		which we make explicit in the next remark.
		
		\begin{rmk}
		    Let $A=\{a_1,\dots,a_s\}$ and $B=\{b_1,\dots,b_l\}$ be subsets of $R[x_1,\dots,x_n]$.
		    Take a Gr\"{o}bner basis $G$ of 
		    \begin{equation*}
		        \{ya_1,\dots,ya_t,(1-y)b_1,\dots,(1-y)b_l\}
		    \end{equation*}
		    in $R[x_1,\dots,x_n,y]$ using a monomial ordering in which monomials containing $y$ are larger than $y$ free monomials.
		    Then a Gr\"{o}bner basis of $\langle A\rangle\cap\langle B\rangle$ is given by the elements of $G$ that do not contain $y$.
		\end{rmk}

        \subsection{Cohomology of complete flag manifolds of simple Lie groups}
        		
        A Lie subgroup $T$ of a Lie group $G$ isomorphic to a torus is called maximal if any subtorus containing $T$ coincides with $T$.
        Maximal tori are conjugate and cover the Lie group, therefore it is unambiguous to refer to the maximal torus $T$ of $G$.
        The homogeneous space $G/T$, isomorphic regardless of the choice of $T$, is called the {\it complete flag manifold} of $G$.
        The rank of Lie group $G$ is the dimension of a maximal torus~$T$.
        		
        The cohomology of homogeneous spaces was studied in detail by Borel in \cite{Borel}. In particular, from Borel's work it is possible to deduce the rational cohomology of $G/T$.
        \begin{theorem}[\cite{Borel}]\label{thm:Borel}
        For compact connected Lie group $G$ with maximal torus $T$,
        \begin{equation*}
        H^*(G/T;\mathbb{Q})\cong \frac{H^*(BT;\mathbb{Q})}{\tilde{H}^*(BT;\mathbb{Q})^{W_G}}
        \end{equation*}
        where $BT$ is the classifying space of $T$ and $W_G$ is the Weyl group of Lie group $G$.
        \end{theorem}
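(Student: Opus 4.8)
The plan is to study the fibration $G/T\to BT\to BG$ coming from the inclusion $T\hookrightarrow G$ (so that $BT\simeq EG/T$ fibres over $BG\simeq EG/G$ with fibre $G/T$), and to combine the Serre/Eilenberg--Moore machinery with the well-known structure of rational cohomology of classifying spaces. Two classical inputs are needed. First, $H^*(BT;\mathbb{Q})$ is a polynomial algebra $\mathbb{Q}[t_1,\dots,t_n]$ on degree $2$ generators, where $n$ is the rank of $G$, and the Weyl group $W_G$ acts on it through its standard reflection action on the character lattice of $T$; by the Chevalley--Shephard--Todd theorem the invariant subring $H^*(BT;\mathbb{Q})^{W_G}$ is again polynomial and $H^*(BT;\mathbb{Q})$ is a \emph{free} module over it. Second, Borel's identification $H^*(BG;\mathbb{Q})\cong H^*(BT;\mathbb{Q})^{W_G}$, induced by $BT\to BG$: injectivity of $H^*(BG;\mathbb{Q})\to H^*(BT;\mathbb{Q})$ follows from a transfer argument using that $\chi(G/T)=|W_G|$ is invertible in $\mathbb{Q}$, and the image lands precisely in the invariants because $W_G$ acts compatibly on the fibre.

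With these in hand I would run the Eilenberg--Moore spectral sequence associated with the pullback square realising $G/T$ as the homotopy fibre of $BT\to BG$, namely
\[
\Tor^{*,*}_{H^*(BG;\mathbb{Q})}\!\big(H^*(BT;\mathbb{Q}),\mathbb{Q}\big)\ \Longrightarrow\ H^*(G/T;\mathbb{Q}).
\]
Since $H^*(BT;\mathbb{Q})$ is free, hence flat, over $H^*(BG;\mathbb{Q})\cong H^*(BT;\mathbb{Q})^{W_G}$, all higher $\Tor$ terms vanish, the spectral sequence collapses at $E_2$, and one obtains
\[
H^*(G/T;\mathbb{Q})\ \cong\ H^*(BT;\mathbb{Q})\otimes_{H^*(BG;\mathbb{Q})}\mathbb{Q}\ \cong\ \frac{H^*(BT;\mathbb{Q})}{\big(\tilde{H}^*(BT;\mathbb{Q})^{W_G}\big)},
\]
the quotient by the ideal generated by the positive-degree Weyl invariants, which is exactly the statement. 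Because everything is concentrated in even degrees there is no extension problem, so the resulting additive isomorphism is in fact an isomorphism of graded rings.

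The main obstacle is really the willingness to cite the two structural facts above: the freeness of $H^*(BT;\mathbb{Q})$ over $H^*(BG;\mathbb{Q})$ and Borel's description of $H^*(BG;\mathbb{Q})$; once these are granted, the collapse of the Eilenberg--Moore spectral sequence is formal. A more hands-on alternative, avoiding the Eilenberg--Moore apparatus, is to use the Leray--Serre spectral sequence of $G/T\to BT\to BG$ directly: freeness of $H^*(BT;\mathbb{Q})$ as a module over $H^*(BG;\mathbb{Q})$ forces the spectral sequence to degenerate, identifying the fibre cohomology with the ``cofree'' quotient, and a Poincar\'e-series count (using $\chi(G/T)=|W_G|$ and the degrees of the basic invariants) confirms the displayed presentation. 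Either route recovers Theorem~\ref{thm:Borel}.
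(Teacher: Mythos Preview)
The paper does not actually supply a proof of Theorem~\ref{thm:Borel}: it is stated as a quotation from Borel \cite{Borel} and used as a black box, so there is nothing against which to compare your argument line by line. Your sketch is a correct and standard route to the result. The Eilenberg--Moore approach you outline is clean: once one grants that $H^*(BT;\mathbb{Q})$ is free over $H^*(BT;\mathbb{Q})^{W_G}$ (Chevalley--Shephard--Todd) and that $H^*(BG;\mathbb{Q})\to H^*(BT;\mathbb{Q})$ identifies $H^*(BG;\mathbb{Q})$ with the Weyl invariants, the collapse of the spectral sequence and the absence of extension problems in even degrees give the displayed presentation immediately. One small caveat: you phrase the identification $H^*(BG;\mathbb{Q})\cong H^*(BT;\mathbb{Q})^{W_G}$ as an input, but this is really the heart of Borel's theorem and essentially equivalent to what is being proved; the transfer argument you mention gives injectivity and containment in the invariants, while surjectivity onto the invariants is usually obtained by the Poincar\'e-series/Euler-characteristic count you allude to at the end. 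With that acknowledged, your proposal is a faithful reconstruction of the classical argument.
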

        In \cite{torG/T} Bott and Samelson, using Morse theory, extended Borel's work by showing that there is no torsion in $H^*(G/T;\mathbb{Z})$.
        This made it easier to deduced the integral structure of the cohomology of complete flag manifolds in the cases of $SU(n),Sp(n)$ and $G_2$.
        Toda in \cite{toda1975} studied the cohomology of homogeneous spaces looking at the mod $p$ cohomology for prime $p$.
        In particular, Toda described in a nice form the integral cohomology algebras of complete flag manifolds in the case of $SO(n)$.
        Soon after, Toda and Watanabe~\cite{toda1974} computed the integral cohomology of the complete flag manifolds of $F_4$ and $E_6$.
        The cohomology of complete flag manifolds of simple Lie groups was completed by Nakagawa who described the cases for $E_7$ and $E_8$ in \cite{NakagawaE7} and \cite{nakagawaE8}, respectively. 
        
        We recall the cohomology rings of flag manifolds used in this paper following
        \cite{Borel} and \cite{AplicationsOfMorse}.
        The cohomology of the complete flag manifold of the simple Lie group $SU(3)$ is given by
        \begin{equation*}
        H^*(SU(3)/T^2;\mathbb{Z})
        =\frac{\mathbb{Z}[\gamma_1,\gamma_2,\gamma_3]}{\langle\sigma_1,\sigma_2,\sigma_3\rangle}
        \end{equation*}
        where $|\gamma_i|=2$ for $i=1,2,3$ and $\sigma_i$ are the elementary symmetric polynomials of degree $i$ in variables $\gamma_1,\gamma_2,\gamma_3$.
        To simplify calculations, using $\sigma_1=\gamma_1+\gamma_2+\gamma_3$ rewrite variable $\gamma_3$
        as $\gamma_3=-\gamma_1-\gamma_2$, hence
        \begin{equation}\label{eq:H*SU/T}
        H^*(SU(3)/T^2;\mathbb{Z})=\frac{\mathbb{Z}[\gamma_1,\gamma_2]}{\langle\sigma_2,\sigma_3\rangle}.
        \end{equation}
        Note that we also rewrite $\sigma_2=\gamma_1^2+\gamma_2^2+\gamma_1\gamma_2$ and
        $\sigma_3=\gamma_1^2\gamma_2+\gamma_1\gamma_2^2$.
        
        The cohomology of the complete flag manifold of the simple Lie group $Sp(2)$ is given by
        \begin{equation}\label{eq:H*Sp/T}
        H^*(Sp(2)/T^2;\mathbb{Z})=\frac{\mathbb{Z}[\gamma_1,\gamma_2]}{\langle\sigma_1^2,\sigma_{2}^2\rangle}
        \end{equation}
        where $|\gamma_i|=2$ for $i=1,2$ and $\sigma_i^2$ denotes elementary symmetric polynomial of degree $i$ in variables $\gamma_1^2,\gamma_2^2$.
        
        The cohomology of the complete flag manifold of the exceptional simple Lie group $G_2$ is given by
        \begin{equation*}
        H^*(G_2/T^2;\mathbb{Z})=\frac{\mathbb{Z}[\gamma_1,\gamma_2,\gamma_3,t_3]}{\langle\sigma_1,\sigma_2,\sigma_3-2t_3,t_3^2\rangle}
        \end{equation*}
        where $|\gamma_i|=2$ for $i=1,2,3$, $|t_3|=6$
        and $\sigma_i$ denotes the elementary symmetric polynomial of degree $i$ in variables $\gamma_1,\gamma_2,\gamma_3$.
        Again to simplify calculations, using $\sigma_1=\gamma_1+\gamma_2+\gamma_3$ we rewrite varaiable $\gamma_3$  as $\gamma_3=-\gamma_1-\gamma_2$, hence
        \begin{equation}\label{eq:H*G2/T}
        	H^*(G_2/T^2;\mathbb{Z})
        	=\frac{\mathbb{Z}[\gamma_1,\gamma_2,t_3]}{\langle\sigma_2,\sigma_3-2t_3,t_3^2\rangle}.
        \end{equation}
		Similarly, rewrite $\sigma_2=\gamma_1^2+\gamma_2^2+\gamma_1\gamma_2$ and
        $\sigma_3=\gamma_1^2\gamma_2+\gamma_1\gamma_2^2$.

		\subsection{Based loop space cohomology of simple Lie groups}
		
		The Hopf algebra of the based loop space of Lie groups were studied by Bott \cite{bott1958}.
		We recall the results used later in the paper.
	Recall the integral divided polynomial algebra on variables $x_1,\dots,x_n$ is given by
		\begin{equation*}
			\Gamma_{\mathbb{Z}}[x_1,\dots,x_n]
			=\frac{\mathbb{Z}[(x_i)_1,(x_i)_2,\dots]}{\langle(x_i)_k-k!x_i^k\rangle}
		\end{equation*}
		for $1\leq i \leq n$, $k\geq 1$ where $(x_i)_1=x_i$.
		The following results can be obtained from the cohomology of $SU(3)$ and $Sp(2)$
		by applying the Leray-Serre spectral sequence to the path-loop fibrations
		\begin{equation*}
		    \Omega SU(3) \to PSU(3) \to SU(3) \quad
		    \text{and}\quad
		    \Omega Sp(2) \to PSp(2) \to Sp(2).
		\end{equation*}
		The integral cohomology of the based loop space of the classical simple Lie group $SU(3)$ is given by
		\begin{equation}\label{eq:LoopSU}
			H^*(\Omega(SU(3));\mathbb{Z})=\Gamma_{\mathbb{Z}}[x_2,x_4]
		\end{equation}
		where $|x_2|=2$ and $|x_4|=4$.
		The integral cohomology of the based loop space of the classical simple Lie group $Sp(2)$ is given by
		\begin{equation}\label{eq:LoopSp}
			H^*(\Omega(Sp(2));\mathbb{Z})=\Gamma_{\mathbb{Z}}[x_2,x_6]
		\end{equation}
		where $|x_2|=2$ and $|x_6|=6$.
		It is less straightforward to, in a similar way, compute the integral cohomology of $\Omega G_2$.
		
		\begin{proposition}\label{prop:LoopG2}
	    The integral cohomology of the based loop space of the exceptional simple Lie group $G_2$ is given by
		\begin{equation*}
			H^*(\Omega G_2;\mathbb{Z})=\frac{\mathbb{Z}[(a_2)_1,(a_2)_2,\dots]}{\langle a_2^{m}-(m!/2^{\lfloor\frac{m}{2}\rfloor})(a_2)_{m}\rangle}\otimes \Gamma_{\mathbb{Z}}[b_{10}]
		\end{equation*}
		where $m\geq 1$, $(a_2)_1=a_2$ and $|(a_2)_m|=2m$, $|b_{10}|=10$.
	    \end{proposition}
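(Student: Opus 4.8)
The plan is to compute $H^{*}(\Omega G_{2};\mathbb{Z})$ from the path-loop fibration $\Omega G_{2}\to PG_{2}\to G_{2}$, exactly as \eqref{eq:LoopSU} and \eqref{eq:LoopSp} were obtained, but keeping careful track of the torsion in $H^{*}(G_{2};\mathbb{Z})$, which is what makes this case genuinely harder. Recall that $H^{*}(G_{2};\mathbb{Z})$ is generated by classes $x_{3}$ of degree $3$ and $x_{11}$ of degree $11$ subject to $x_{3}^{4}=0$, $x_{11}^{2}=0$ and $2x_{3}^{2}=0$, so that $H^{6}(G_{2};\mathbb{Z})\cong\mathbb{Z}/2$ is generated by $x_{3}^{2}$, $H^{9}(G_{2};\mathbb{Z})\cong\mathbb{Z}/2$ by $x_{3}^{3}$, while $H^{3},H^{11},H^{14}$ are free. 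Since $PG_{2}$ is contractible, the Leray--Serre spectral sequence $E_{2}^{p,q}=H^{p}(G_{2};H^{q}(\Omega G_{2};\mathbb{Z}))$ must collapse to $\mathbb{Z}$ in total degree $0$, and this rigidity, together with the transgressions, forces the answer.

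First I would run the spectral sequence inductively on total degree. Connectivity of $\Omega G_{2}$ gives $H^{1}(\Omega G_{2})=0$ and $H^{2}(\Omega G_{2})=\mathbb{Z}\langle a_{2}\rangle$ by Hurewicz, and the only way $x_{3}\in E^{3,0}$ can die is via $d_{3}(a_{2})=x_{3}$ (any proper multiple would leave torsion in $E_{\infty}^{3,0}$). Continuing up, the class dual to the degree-$11$ generator transgresses as $d_{11}(b_{10})=x_{11}$ for some $b_{10}\in H^{10}(\Omega G_{2})$. The sub-spectral-sequence built from $b_{10}$ and $x_{11}$ is formally identical to the one for $\Omega S^{11}\to PS^{11}\to S^{11}$, because $x_{11}$ is torsion-free and squares to zero; it therefore contributes the factor $\Gamma_{\mathbb{Z}}[b_{10}]$ with $d_{11}((b_{10})_{k})=(b_{10})_{k-1}x_{11}$, as in \eqref{eq:LoopSU}. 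All the new phenomena are concentrated in the part of the spectral sequence lying over $\{1,x_{3},x_{3}^{2},x_{3}^{3}\}\subseteq H^{*}(G_{2};\mathbb{Z})$ together with the powers $a_{2}^{m}$.

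The heart of the argument is to show that the relation $2x_{3}^{2}=0$ forces a ``half divided power'' structure on the subalgebra generated by $a_{2}$. Working integrally, $d_{3}$ is a derivation with $d_{3}(a_{2})=x_{3}$, so $d_{3}(a_{2}^{m})=m\,a_{2}^{m-1}x_{3}$, while $d_{3}(a_{2}\,x_{3}^{k})=x_{3}^{k+1}$ already kills $x_{3}^{k+1}$ for $k=1,2$; since the classes $x_{3}^{2},x_{3}^{3}$ have order $2$, the cancellations on the pages $E_{3},E_{4},\dots$ become consistent with $E_{\infty}=\mathbb{Z}$ only if $H^{2m}(\Omega G_{2};\mathbb{Z})$ carries a generator $(a_{2})_{m}$ with $a_{2}^{m}=(m!/2^{\lfloor m/2\rfloor})(a_{2})_{m}$ --- morally, the torsion-free divided-power relation $a_{2}^{m}=m!\,(a_{2})_{m}$ is ``corrected'' by one factor of $2$ for each of the $\lfloor m/2\rfloor$ pairs of factors of $a_{2}$, each such factor being absorbed by an order-$2$ class in $H^{*}(G_{2};\mathbb{Z})$. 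Concretely one computes $d_{3}$ and the higher differentials on $(a_{2})_{m}$ and on $(a_{2})_{m}x_{3}$, $(a_{2})_{m}x_{3}^{2},\dots$, checks that with the stated divisibilities every page cancels and no torsion survives (so $H^{*}(\Omega G_{2};\mathbb{Z})$ is torsion-free, as also follows from Bott's theorem on loop spaces of simply connected compact Lie groups), and then resolves the few multiplicative extensions in the associated graded --- these are rigid because the relevant bidegrees are one-dimensional. Multiplying the $a_{2}$-part by the $\Gamma_{\mathbb{Z}}[b_{10}]$-part gives the tensor product in the statement.

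The hard part will be the bookkeeping of the higher differentials in the presence of the torsion, and in particular pinning down the exact $2$-adic valuation $\lfloor m/2\rfloor$ rather than something close to it. A clean way to organise this (and to double-check the answer) is to localise. For an odd prime $p$ one has $G_{2}\simeq_{(p)}S^{3}\times S^{11}$, hence $H^{*}(\Omega G_{2};\mathbb{Z}_{(p)})\cong\Gamma_{\mathbb{Z}_{(p)}}[a_{2}]\otimes\Gamma_{\mathbb{Z}_{(p)}}[b_{10}]$, which agrees with the claimed ring since $v_{p}(m!/2^{\lfloor m/2\rfloor})=v_{p}(m!)$; at the prime $2$ one re-runs the same spectral sequence with $H^{*}(G_{2};\mathbb{Z}/2)=\mathbb{Z}/2[y_{3}]/(y_{3}^{4})\otimes\Lambda(y_{5})$, reading off the $2$-local ``half divided power'' structure together with the Bockstein data directly. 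Assembling the rational computation $H^{*}(\Omega G_{2};\mathbb{Q})=\mathbb{Q}[a_{2},b_{10}]$ with the $p$-local answers for all $p$ then yields the integral ring in the statement.
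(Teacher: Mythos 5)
Your overall strategy coincides with the paper's: run the integral Leray--Serre spectral sequence of $\Omega G_2\to PG_2\to G_2$, let $d^3(a_2)=x_3$ and the relation $2x_3^2=0$ force the divisibilities $a_2^m=(m!/2^{\lfloor m/2\rfloor})(a_2)_m$, and invoke Bott's torsion-freeness. However, your treatment of $b_{10}$ contains a genuine error. After $d^3$ (computed from $d^3(a_2)=x_3$ and the half-divided-power relations) the $E_4$-page does \emph{not} split into an $(a_2,x_3)$-part and a $(b_{10},x_{11})$-part: the $2$-torsion classes $(a_2)_tx_3^3\in E_4^{9,2t}$ and $(a_2)_tx_3x_{11}\in E_4^{14,2t}$ for odd $t$ survive $d^3$ (for instance $d^3((a_2)_2x_3^2)=2a_2x_3^3=0$), and by inspection of bidegrees the only differentials that can remove them are $d^9$ out of the fibre column, $d^5$ between the $p=9$ and $p=14$ columns, and $d^{11}$. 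In particular the class $a_2x_3^3$ in bidegree $(9,2)$, total degree $11$, forces $d^9(b_{10})=a_2x_3^3\neq 0$; so $b_{10}$ is not transgressive, only $2b_{10}$ survives to $E_{11}$, and it is $d^{11}(2b_{10})=x_{11}$ that kills $x_{11}$ -- exactly the coupling the paper spells out. With your claimed behaviour ($d^9(b_{10})=0$, $d^{11}(b_{10})=x_{11}$, and a sub-spectral sequence ``formally identical'' to that of $\Omega S^{11}\to PS^{11}\to S^{11}$) the torsion classes above would persist to $E_\infty$, contradicting contractibility of $PG_2$. So the step in which the $\Gamma_{\mathbb{Z}}[b_{10}]$ factor is supposed to split off cheaply fails, and the interaction between the two families of generators (including the multiplicative statement that the result is a tensor product) has to be argued, as the paper does.

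Two smaller points. The cross-check ``$G_2\simeq_{(p)}S^3\times S^{11}$ for odd $p$'' is false for $p=3,5$; what you actually need, and what is true, is only that $H^*(G_2;\mathbb{Z}_{(p)})\cong\Lambda(x_3,x_{11})$ for odd $p$, since only the cohomology of the base enters the spectral sequence. And the decisive quantitative claim -- that the $2$-adic valuation is exactly $\lfloor m/2\rfloor$ -- is asserted (``morally \dots corrected by one factor of $2$'') rather than derived; it comes from the kernel computation $\ker\bigl(d^3\colon E_3^{3,2m}\to E_3^{6,2m-2}\bigr)$ being generated by $2(a_2)_mx_3$ when $m$ is odd and by $(a_2)_mx_3$ when $m$ is even, which is the inductive computation the paper carries out and which your sketch would still need to perform.
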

        
        \begin{proof}
		    The integral cohomology of $G_2$~\cite{CohomologyLieGroup} is give by
		    \begin{equation*}
		        H^*(G_2;\mathbb{Z})=\frac{\mathbb{Z}[x_3,x_{11}]}{\langle x_3^4, \; x_{11}^2, \; 2x_3^2, \; x_3^2x_{11} \rangle}
		    \end{equation*}
		    where $|x_3|=3$ and $|x_{11}|=11$.
		    Since $G_2$ is simply connected,
		    consider the Leray-Serre spectral sequence $\{ E_r, d^r \}$ of the path-loop fibration
		    \begin{equation*}
		        \Omega G_2 \to P G_2 \to G_2.
		    \end{equation*}
		    Spectral sequence $\{ E_r, d^r \}$ converges to the trivial algebra and the first non-trivial class in the cohomology of $G_2$ is generated by $x_3$ in degree $3$.
		    Hence, for dimensional reasons there is $a_2\in H^2(\Omega G_2;\mathbb{Z})$ and 
		    \begin{equation*}
		        d^3(a_2)=x_3.
		    \end{equation*}
		    For $m\geq 1$, denote by $(a_2)_m$ the additive generator of $H^{2m}(\Omega G_2;\mathbb{Z})$
		    such that $c(a_2)_m=a_2^m$ for some $c\geq 0$.
		    Element $x_3^2$ is $2$-torsion and $d^3(a_2x_3)=x_3^2$,
		    so the kernel of $d^3 \colon E^3_{3,2} \to E^3_{6,0}$ is generated by $2a_2x_3$.
		    Hence, as $d^3(a_2^2)=2a_2x_3$,
		    \begin{equation*}
		        (a_2)_2=a_2^2.
		    \end{equation*}
		    As the element $a_2x_3^2$ is 2-torsion and $d^3((a_2)_2x_3)=2a_2x_3^2=0$,
		    the kennel of $d^3 \colon E^3_{3,4} \to E^3_{6,2}$ is generated by $(a_2)_2x_3$.
		    Hence, as $d^3(a_2^3)=3(a_2)_2x_3$,
		    \begin{equation*}
		        3(a_2)_3=a_2^3.
		    \end{equation*}
		    For $p \geq 1$, $q\geq 2$ even and $r\geq 1$ odd,
		    \begin{equation*}
		        \ker(d^3)(E_3^{p,2q})\cong H^*(G_2,\mathbb{Z})
		        \text{ and }
		        \ker(d^3)(E_3^{p,2r})\cong E_3^{p,2}.
		    \end{equation*}
		    In particular, it can be shown inductively that
		    $\{ a_2=(a_2)_1,(a_2)_2,(a_2)_3,\dots \}$ generate an algebra
		    \begin{equation*}
		        \frac{\mathbb{Z}[(a_2)_1,(a_2)_2,\dots]}{\langle a_2^{m}-(m!/2^{\lfloor\frac{m}{2}\rfloor})(a_2)_{m} \rangle}.
		    \end{equation*}
		    As element $(a_2)_{q-1}x_3^2$ is 2-torsion and $d^3((a_2)_qx_3)=(a_2)_{q-1}x_3^2$,
		    the kernel of $d^3 \colon E^3_{3,2q} \to E^3_{6,2q-2}$ is generated by $2(a_2)_{q-1}x_3$.
		    As $d^3(a_2^q)=qa_2^{q-1}x_3$, by induction,
		    \begin{equation*}
		        q!/2^q(a_2)_q=a_2^q.
		    \end{equation*}
		    As element ${a_2}_rx_3^2$ is 2-torsion and $d^3((a_2)_{r+1}x_3)=a_rx_3^2$,
		    the kennel $d^3 \colon E^3_{3,2r} \to E^3_{6,2r-2}$ is generated by $(a_2)_{r-1}x_3$.
		    Hence, as $d^3(a_2^r)=r(a_2)^{r-1}x_3$, by induction
		    \begin{equation*}
		        r!/2^{r-1}(a_2)_r=a_2^r.
		    \end{equation*}
		    For dimensional reasons, there are no other non-trivial $d^3$ differentials other than those occurring on multiple of $(a_3)_m$.
		    Hence the only non-trivial elements on the $E_4$-page divisible by $(a_2)_m, \; x_3$ or $x_{11}$
		    for $m \geq 1$ are generated by
		    \begin{equation*}
		        x_{11}, \; (a_2)_tx_2, \; (a_2)_tx_3x_{11}
		    \end{equation*}
		    where $t\geq 1$ is odd,
		    $x_{11}$ is non-torsion, $(a_2)_mx_2$ and $x_3x_{11}$ are $2$-torsion.
		    By \cite{bott1958}, there is no torsion in $H^*(\Omega G_2,\mathbb{Z})$.
		    Hence, for dimensional reasons, there is $b_{10}\in H^*(\Omega G_2 ; \mathbb{Z})$ and
		    \begin{equation*}
		       d^5((a_2)_tx_2)=(a_2)_{t-2}x_3x_{11}, \; d^9(b_{10})=a_2x_3, \; d^{11}(2b_{10})=x_{11}
		    \end{equation*}
		    for odd $t\geq 3$.
		    It can be shown that $b_{10}$ generates a divided polynomial algebra 
		    in the same way as for generators of $H^*(\Omega SU(3);\mathbb{Z})$
		    and $H^*(\Omega Sp(2);\mathbb{Z})$,
		    which completes the proof.
	    \end{proof}

	\section{Differentials in the diagonal map spectral sequence}\label{sec:evalSS}
		
			We begin by studying the differentials in the evaluation fibration of a simply connected, simple Lie group $G$ of rank $2$ with maximal torus $T$. 
			The argument is similar to that of \cite{cohololgy_Lprojective},
			in which the cohomology of the free loop spaces of spheres and complex projective space are calculated using spectral sequence techniques. 
			However the technical details in the case of the complete flag manifolds are considerably more complex.
			
			For a space $X$, the fibration $Map(I,X) \to X\times X$ is given by $\alpha \mapsto (\alpha(0),\alpha(1))$. Note that $Map(I,X)\simeq X$.
			It can be shown that $eval$ is a fibration homotopy equivalent to the diagonal map
			with its fibre $\Omega X$.
			In this section we compute the differentials in the cohomology Leray-Serre spectral sequence of this fibration in the case $X=G$.
			The aim is to compute $H^{*}(\Lambda G/T;\mathbb{Z})$.
			The fibration $eval \colon \Lambda X \to X$ is given by the evaluation of a loop at the  base point.
			The evaluation fibration is also a fibration with fibre
			$\Omega X$.
			This fibration is studied in Section \ref{sec:diff} by considering a map of fibrations from the evaluation fibration of $G$ to the diagonal fibration
			and hence the induced map on spectral sequences. For the rest of this section we consider the fibration
			
			\begin{equation}\label{eq:evalfib}
				\Omega (G/T) \to G/T \xrightarrow{\Delta} G/T\times G/T. 
			\end{equation}
			
			By extending the fibration $T \to G \to G/T$, we obtain the homotopy fibration sequence
			
			\begin{equation}\label{eq:SU/Tfib}
			\Omega G \to \Omega(G/T) \to T \to G.
			\end{equation}
			
			It is well known see \cite{CohomologyOmega(G/U)}, that the inclusion of the maximal torus into $G$ is null-homotopic.
			Hence there is a homotopy section $T \to \Omega G$, implying that
			\begin{equation}\label{eq:OmegaG/T}
			    \Omega (G/T) \simeq \Omega G \times T.
			\end{equation}
			All cohomology algebras of spaces in fibration~(\ref{eq:evalfib}) are known.
			As $G/T$ hence $G/T \times G/T$ are simply connected,
			the cohomology Leray-Serre spectral sequence of fibration~(\ref{eq:evalfib}), which we denote by $\{\bar{E}_r,\bar{d}^r\}$, 
			converges to $H^*(G/T;\mathbb{Z})$ with $\bar{E}_2$-page $\bar{E}^{p,q}_2=H^p(G/T \times G/T;H^q(\Omega(G/T);\mathbb{Z}))$.
			
			\subsection{Case $SU(3)/T^2$}
			
			When $G=SU(3)$, 
			using decomposition \eqref{eq:OmegaG/T}, we obtain the algebra isomorphism
			\begin{center}
			$H^*(\Omega(SU(3)/T^2);\mathbb{Z}) \cong H^*(\Omega(SU(3);\mathbb{Z}) \otimes H^*(T^2;\mathbb{Z})
			\cong \Gamma_{\mathbb{Z}}[x_2,x_4] \otimes \Lambda_{\mathbb{Z}}(y_1,y_2)$,
			\end{center}
			where $|x_2|=2$ and $|x_4|=4$ and $|y_1|=|y_2|=1$.
			Using the cohomology description \eqref{eq:H*SU/T}, we set
			\begin{center}
			$H^*(SU(3)/T^2;\mathbb{Z}) = \frac{\mathbb{Z}[\lambda_1,\lambda_2]}{\langle\sigma^{\lambda}_2,\sigma^{\lambda}_3\rangle}$
			\end{center}
			and
			\begin{center}
			$H^*(SU(3)/T^2 \times SU(3)/T^2;\mathbb{Z}) =
			\frac{\mathbb{Z}[\alpha_1,\alpha_2]}
			{\langle\sigma^{\alpha}_2,\sigma^{\alpha}_3\rangle} \otimes
			\frac{\mathbb{Z}[\beta_1,\beta_2]}
			{\langle\sigma^{\beta}_2,\sigma^{\beta}_3\rangle}$
			\end{center}
			where $|\alpha_i|=|\beta_i|=|\lambda_i|=2$ for each $i=1,2$.
			
				The following lemma determines the $\bar{d}^2$ differential on $\bar{E}_2^{*,1}$.
				We use the alternative basis
				\begin{center}
					$v_i=\alpha_i-\beta_i$ and $u_i=\beta_i$
				\end{center}
				for $H^{*}(SU(3)/T^2\times SU(3)/T^2;\mathbb{Z})$, where $i=1,2$. 
			
				\begin{lemma}\label{lemma:E^2_{*,1}d^2}
					In the cohomology Leray-Serre spectral sequence of fibration~(\ref{eq:evalfib}),
					there is a choice of basis $y_1,y_2$ such that
					\begin{center}
							$\bar{d}^2(y_i)=v_i$
					\end{center}
					for each $i=1,2$.
				\end{lemma}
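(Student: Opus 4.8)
The plan is to read off $\bar d^2$ on $\bar E_2^{0,1}$ purely from the convergence of the spectral sequence $\{\bar E_r,\bar d^r\}$ of fibration~(\ref{eq:evalfib}) together with its edge homomorphism, working integrally throughout; this parallels the analogous step for spheres and projective spaces in \cite{cohololgy_Lprojective}.

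First I would record the relevant corner of the $\bar E_2$-page. Since $H^*(SU(3)/T^2;\mathbb{Z})$ and $H^*(\Omega(SU(3)/T^2);\mathbb{Z})$ are both torsion free, the universal coefficient theorem gives $\bar E_2^{p,q}\cong H^p(SU(3)/T^2\times SU(3)/T^2;\mathbb{Z})\otimes H^q(\Omega(SU(3)/T^2);\mathbb{Z})$. In particular $\bar E_2^{0,1}$ is free of rank $2$ on $y_1,y_2$; the group $\bar E_2^{2,0}=H^2(SU(3)/T^2\times SU(3)/T^2;\mathbb{Z})$ is free of rank $4$ on $\alpha_1,\alpha_2,\beta_1,\beta_2$, hence also on $u_1,u_2,v_1,v_2$; and $\bar E_2^{1,0}=H^1(SU(3)/T^2\times SU(3)/T^2;\mathbb{Z})=0$ because $SU(3)/T^2$ is simply connected.

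Next I would pin down $\bar d^2\colon\bar E_2^{0,1}\to\bar E_2^{2,0}$. For dimension reasons this is the only differential that can be nonzero out of the column $p=0$, and no differential enters that column, so $\bar E_\infty^{0,1}=\ker\bar d^2$; since the associated graded of $H^1(SU(3)/T^2;\mathbb{Z})=0$ is $\bar E_\infty^{1,0}\oplus\bar E_\infty^{0,1}$ and $\bar E_\infty^{1,0}$ is a subquotient of $\bar E_2^{1,0}=0$, we get $\bar E_\infty^{0,1}=0$ and hence $\bar d^2$ is injective. On the other side $\bar E_r^{2,0}$ emits no differential and receives none for $r\geq 3$, so $\bar E_\infty^{2,0}=\bar E_3^{2,0}=\bar E_2^{2,0}/\operatorname{im}\bar d^2$, and the composite $\bar E_2^{2,0}\twoheadrightarrow\bar E_\infty^{2,0}\hookrightarrow H^2(SU(3)/T^2;\mathbb{Z})$ is the edge homomorphism $\Delta^*$. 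Therefore $\operatorname{im}\bar d^2=\ker\Delta^*$. As $\Delta^*\alpha_i=\Delta^*\beta_i=\lambda_i$ and $\lambda_1,\lambda_2$ are linearly independent in $H^2(SU(3)/T^2;\mathbb{Z})$, this kernel is precisely the free rank-$2$ subgroup generated by $v_1=\alpha_1-\beta_1$ and $v_2=\alpha_2-\beta_2$.

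Finally I would conclude with a change of basis: $\bar d^2$ restricts to an isomorphism from the free rank-$2$ subgroup generated by $y_1,y_2$ onto the free rank-$2$ subgroup generated by $v_1,v_2$, so expressing $\bar d^2(y_1),\bar d^2(y_2)$ in the basis $v_1,v_2$ yields an invertible integer matrix; replacing $y_1,y_2$ by the integral basis determined by its inverse gives classes, still written $y_1,y_2$, with $\bar d^2(y_i)=v_i$ for $i=1,2$. I do not expect a genuine obstacle here. The only subtlety is that the argument is over $\mathbb{Z}$: it is essential to identify $\operatorname{im}\bar d^2$ with $\ker\Delta^*$ exactly via the edge homomorphism rather than merely up to finite index, and to verify $H^1(SU(3)/T^2;\mathbb{Z})=0=\bar E_2^{1,0}$ in order to obtain injectivity of $\bar d^2$ — both being immediate from the simple connectivity of $SU(3)/T^2$.
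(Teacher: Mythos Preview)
Your proof is correct and follows essentially the same approach as the paper: both identify $\operatorname{im}\bar d^2\colon\bar E_2^{0,1}\to\bar E_2^{2,0}$ with the kernel of the edge homomorphism $\Delta^*$ (which the paper phrases as ``the kernel of the cup product''), generated by $v_1,v_2$, and then adjust the basis $y_1,y_2$ accordingly. Your version is more explicit about the integral details (injectivity of $\bar d^2$ from $H^1=0$, and the invertible integer matrix justifying the basis change), but the underlying argument is the same.
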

				
				\begin{proof}
					For dimensional reasons, $\bar{d}^2$ is the only possible non-zero differential with codomain at any $\bar{E}_*^{2,0}$ and no non-zero differential have domain in any $\bar{E}_*^{2,0}$.
					As fibration~(\ref{eq:evalfib}) is the diagonal map $\Delta\colon SU(3)/T2\to SU(3)/T^2\times SU(3)/T^2$
					and the spectral sequence converges to $H^{*}(SU(3)/T^2)$,
					the image of $\bar{d}^2 \colon \bar{E}_2^{0,1} \to \bar{E}_2^{2,0}$ must be the kernel of the cup product on $H^*(SU(3)/T^2\times SU(3)/T^2;\mathbb{Z})$, 
					which is generated by $v_1,v_2$.
				\end{proof}

				\begin{theorem}\label{theorem:finaldiff}
    				In the spectral sequence $\{\bar{E}_r,\bar{d}^r\}$,
    				up to class representative and sign in $\bar{E}_2^{2,1}$  and $\bar{E}_2^{4,1}$, the non-trivial differentials are given by
    				\begin{equation*}
    				    \bar{d}^2(x_2)=y_1v_1+y_2v_2+y_1v_2+2y_1u_1+2y_2u_2+y_1u_2+y_2u_1
    				\end{equation*}
    				and
    				\begin{equation*}	
    				\bar{d}^4(x_4)=y_1v_1v_2+y_2v_2v_1+2y_1u_1v_2+2y_2u_2v_1+y_1v_1u_2+y_2v_2u_1+2y_1u_1u_2+2y_2u_2u_1+y_1u_2^2+y_2u_1^2.
    				\end{equation*}
				\end{theorem}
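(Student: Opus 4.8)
The plan is to exploit that $\{\bar E_r,\bar d^r\}$ converges to the finite-dimensional algebra $H^*(SU(3)/T^2;\mathbb Z)$, so the infinite divided polynomial subalgebra $\Gamma_{\mathbb Z}[x_2,x_4]$ of the fibre cohomology must be entirely destroyed by differentials; by the derivation property and the divided-power relations $x_2^k=k!(x_2)_k$, $x_4^k=k!(x_4)_k$ it is then enough to determine $\bar d^r(x_2)$ and $\bar d^r(x_4)$. Throughout I would use $\bar d^2(y_i)=v_i$ from Lemma~\ref{lemma:E^2_{*,1}d^2} and the fact that $\sigma_2^\alpha,\sigma_3^\alpha,\sigma_2^\beta,\sigma_3^\beta$ already vanish in $H^*(SU(3)/T^2\times SU(3)/T^2;\mathbb Z)$; this vanishing is exactly what will make the candidate differentials square to zero.

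First I would pin down which differentials can be nonzero. The class $x_2\in\bar E_2^{0,2}$ can only support $\bar d^2$: the target of $\bar d^3$ is a subquotient of $H^3(SU(3)/T^2\times SU(3)/T^2;\mathbb Z)=0$, and $\bar d^r$ for $r>3$ lands in negative fibre degree. A short convergence argument (the abutment vanishes in total degree $3$, and $\bar E_3^{2,1}$ neither supports nor receives a higher differential) gives $\bar E_3^{2,1}=0$, hence $\bar d^2$ is injective on $\bar E_2^{0,2}$ and $\bar d^2(x_2)\ne0$. For $x_4\in\bar E_2^{0,4}$ one has $\bar d^3(x_4)=0$ because $\bar E_2^{3,2}=H^3(SU(3)/T^2\times SU(3)/T^2;H^2(\Omega(SU(3)/T^2)))=0$; and since $\bar E_2^{2,3}=x_2\cdot\bar E_2^{2,1}$ and the remaining generators $(x_2)_2,x_2y_1y_2$ of $\bar E_2^{0,4}$ have $\bar d^2$-images inside $x_2\cdot\bar E_2^{2,1}$, one may replace $x_4$ by a $\mathbb Z$-linear combination with $(x_2)_2$ and $x_2y_1y_2$ so that $\bar d^2(x_4)=0$. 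Thus $x_4$ survives to the $\bar E_4$-page and, by the same convergence argument in total degree $5$, supports a nonzero $\bar d^4$.

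To identify $\bar d^2(x_2)$, write $\bar d^2(x_2)=y_1a_1+y_2a_2$ with $a_i\in H^2(SU(3)/T^2\times SU(3)/T^2;\mathbb Z)$. Then $\bar d^2\bar d^2(x_2)=v_1a_1+v_2a_2=0$, so $(a_1,a_2)$ is a syzygy of $(v_1,v_2)$; a rank computation shows these syzygies form a rank-$2$ group, spanned by the Koszul syzygy $\bar d^2(y_1y_2)$ and by the difference quotient $\omega=y_1q_1+y_2q_2$, where $\sigma_2^\alpha-\sigma_2^\beta=v_1q_1+v_2q_2$ in $\mathbb Z[\alpha_1,\alpha_2,\beta_1,\beta_2]$. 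Since $\bar E_3^{2,1}=0$, the $\omega$-coefficient of $\bar d^2(x_2)$ is a unit, whence $\bar d^2(x_2)=\pm\omega$ modulo the class-representative ambiguity of replacing $x_2$ by $x_2+c\,y_1y_2$ (which shifts $\omega$ by the Koszul syzygy); rewriting the $q_i$ in the basis $v_i=\alpha_i-\beta_i$, $u_i=\beta_i$ produces the stated formula for $\bar d^2(x_2)$. The same method identifies $\bar d^4(x_4)\in\bar E_4^{4,1}$: one computes $\bar E_4^{4,1}=\bar E_3^{4,1}$ and shows $\bar d^4(x_4)$ is represented, up to sign and a $\bar d^2$-boundary, by the difference quotient $\omega'=y_1q_1'+y_2q_2'$ of $\sigma_3$, with $\sigma_3^\alpha-\sigma_3^\beta=v_1q_1'+v_2q_2'$; expanding $q_i'$ in the $v_i,u_i$ basis gives the displayed expression, the equality $v_1q_1'+v_2q_2'=\sigma_3^\alpha-\sigma_3^\beta=0$ in cohomology confirming it is a cycle on $\bar E_4$. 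All remaining differentials then follow from these by multiplicativity and the divided-power relations.

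The hard part is the $\bar E_4$-page bookkeeping underlying $\bar d^4(x_4)$: one must carry the divided polynomial algebra $\Gamma_{\mathbb Z}[x_2,x_4]$ through $\bar d^2$ and compute $\bar E_4^{4,1}=\ker(\bar d^2\colon\bar E_2^{4,1}\to\bar E_2^{6,0})/\operatorname{im}(\bar d^2\colon\bar E_2^{2,2}\to\bar E_2^{4,1})$ carefully enough to see that the class of $\omega'$ generates the subgroup that $x_4$ can hit and is nonzero. Because $\Gamma_{\mathbb Z}$ is not a polynomial algebra, controlling the integral coefficients — in particular the factors $2$ appearing in both formulas, which come from the squared terms of $\sigma_2,\sigma_3$ via $\alpha_i+\beta_i=v_i+2u_i$ — and tracking which divided powers $(x_2)_k$ persist is delicate, and this is where the computation becomes substantially heavier than the $\mathbb{CP}^n$ case treated in \cite{cohololgy_Lprojective}.
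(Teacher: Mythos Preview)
Your proposal is correct and arrives at the same formulas via essentially the same mechanism as the paper: both recognise that $\bar d^2(x_2)$ and $\bar d^4(x_4)$ are the difference quotients of $\sigma_2$ and $\sigma_3$, i.e.\ the elements $y_1q_1+y_2q_2$ determined by $\sigma_i^\alpha-\sigma_i^\beta=v_1q_1+v_2q_2$, and both use convergence to $H^*(SU(3)/T^2;\mathbb Z)$ to force these differentials to be nonzero.

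The difference is one of packaging rather than substance. The paper simply writes down the candidate, applies $\bar d^2$ to it term by term, and checks that the result is $\sigma_i^\alpha-\sigma_i^\beta=0$ in the base; uniqueness is then asserted (``the only remaining cycle that can be hit'') without a full justification. Your syzygy formulation makes this uniqueness explicit: you identify $\ker(\bar d^2\colon\bar E_2^{2,1}\to\bar E_2^{4,0})$ as the rank-$2$ group spanned by the Koszul syzygy $\bar d^2(y_1y_2)$ and the difference quotient $\omega$, and then use $\bar E_3^{2,1}=0$ to conclude that the $\omega$-coefficient of $\bar d^2(x_2)$ is a unit. You are also more careful than the paper about why $\bar d^2(x_4)$ may be taken to vanish---by adjusting $x_4$ with $(x_2)_2$ and $x_2y_1y_2$---whereas the paper covers this only with the phrase ``for dimensional reasons''. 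So your argument is a tidier and more complete version of the paper's direct verification, not a genuinely different route.
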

				
				\begin{proof}
				All differentials on $\alpha_i$ and $\beta_i$ are trivial for dimensional reasons.
				So the only remaining differentials left to determine are those on
				$\langle x_2 \rangle$ and $\langle x_4 \rangle$.
				For dimensional reasons, the elements $x_2,x_4$ cannot be the image of any differential.
				By Lemma~\ref{lemma:E^2_{*,1}d^2}, the generators $u_1,u_2$ must survive to the $\bar{E}_{\infty}$-page,
				so generators $x_2$ and $x_4$ cannot.	
				The image of $\bar{d}^{2}\colon\bar{E}_{2}^{0,2}\to \bar{E}_{2}^{2,1}$ will be a class in $\bar{E}_{2}^{2,1}$
				in the kernel of $\bar{d}^2$ generated by a single element,
				not in the image of elements generated by $y_i,u_i$ and $v_i$ alone.
				
				We have $\bar{d}^2(u_i)=\bar{d}^2(v_i)=0$ and by Lemma~\ref{lemma:E^2_{*,1}d^2} we may assume that $\bar{d}^2(y_i)=v_i$ for each $i=1,2$.
				Non-zero generators in $\bar{E}_2^{2,1}$ can be expressed in the form
				\begin{center}
					$y_k u_i$ or $y_k v_i$
				\end{center}
				and non-zero generators in $\bar{E}_2^{4,1}$ can be expressed in the form
				\begin{center}
					$y_k u_i u_j$, $y_k v_i u_j$ or $y_k v_i v_j$
				\end{center}
				for some $1\leq i,j,k \leq 2$.
                Notice that for $1\leq k,i \leq 2$
                \begin{align*}
                 &\bar{d}^2(y_1v_1+y_2v_2+y_1v_2+2y_1u_1+2y_2u_2+y_1u_2+y_2u_1)
                 \\
                 &=v_1^2+v_2^2+v_1v_2+2v_1u_1+2v_2u_2+v_1u_2+v_2u_1
                 \\
                 &=(\alpha_1^2-2\alpha_1\beta_1+\beta_1^2)
                 +(\alpha_2^2-2\alpha_2\beta_2+\beta_2^2)
                 +(\alpha_1\alpha_2-\alpha_1\beta_2-\alpha_2\beta_1+\beta_1\beta_2)
                 \\
                 &\;\;\;\;+2(\alpha_1\beta_1-\beta_1^2)
                 +2(\alpha_2\beta_2-\beta_2^2)
                 +(\alpha_1\beta_2-\beta_1\beta_2)
                 +(\alpha_2\beta_1-\beta_2\beta_1)
                 \\
                 &=\alpha_1^2+\alpha_2^2+\alpha_1\alpha_2-\beta_1^2-\beta_2^2-\beta_1\beta_2=0.
                \end{align*}
                
                In particular since $y_1v_1$ is not a term in $\bar d^2(y_1y_2)=y_2v_1-y_1v_2$,
                \begin{equation*}
                    y_1v_1+y_2v_2+y_1v_2+2y_1u_1+2y_2u_2+y_1u_2+y_2u_1
                \end{equation*}
                is a generator. Thus since it is the only remaining cycle that can be hit by $x_2$ in the kernel of $\bar d^2$ at $E^{2,1}_2$,
                \begin{equation*}
                \bar{d}^2(x_2)=y_1v_1+y_2v_2+y_1v_2+2y_1u_1+2y_2u_2+y_1u_2+y_2u_1
                \end{equation*}
                up to class representative and sign.
                For dimensional reasons and due to all lower rows except $\bar{E}_4^{*,2}$ and $\bar{E}_4^{*,1}$ being annihilated by the $\bar{d}^2$ differential, the only possible non-zero differential beginning at $x_{4}$,
				is $\bar{d}^{4}\colon\bar{E}_{4}^{0,4}\to \bar{E}_{4}^{4,1}$.
				The image of the differential $\bar{d}^{4}$ on $x_4$ will therefore be a class in $\bar{E}_{4}^{4,1}$
				in the kernel of $\bar{d}^2$ generated by a single elements,
				not in the image of elements generated by $y_i,u_i$ and $v_i$ alone.
                We see that
                \begin{align*}
                &\bar{d}^2(y_1v_1v_2+y_2v_2v_1+2y_1u_1v_2+2y_2u_2v_1+y_1v_1u_2+y_2v_2u_1+2y_1u_1u_2+2y_2u_2u_1+y_1u_2^2+y_2u_1^2)
                \\&=
                v_1^2v_2+v_2^2v_1+2v_1v_2u_1+2v_1v_2u_2
                +v_1^2u_2+v_2^2u_1+2v_1u_1u_2+2v_2u_1u_2+v_1u_2^2+v_2u_1^2
                \\&=
                (\alpha_1^2\alpha_2-\alpha_1^2\beta_2-2\alpha_1\alpha_2\beta_1
                +2\alpha_1\beta_1\beta_2+\alpha_2\beta_1^2-\beta_1^2\beta_2)
                \\ & \;\;\;\;
                +(\alpha_1\alpha_2^2-\alpha_2^2\beta_1-2\alpha_1\alpha_2\beta_2
                +2\alpha_2\beta_1\beta_2+\alpha_1\beta_2^2-\beta_1\beta_2^2)
                \\ & \;\;\;\; +2(\alpha_1\alpha_2\beta_1-\alpha_1\beta_1\beta_2-\alpha_2\beta_1^2+\beta_1^2\beta_2)
                +2(\alpha_1\alpha_2\beta_2-\alpha_1\beta_2^2-\alpha_2\beta_1\beta_2+\beta_1\beta_2^2)
                \\ & \;\;\;\;
                +(\alpha_1^2\beta_2-2\alpha_1\beta_1\beta_2+\beta_1^2\beta_2)
                +(\alpha_2^2\beta_1-2\alpha_2\beta_1\beta_2+\beta_1\beta_2^2)
                \\ & \;\;\;\;
                +2(\alpha_1\beta_1\beta_2-\beta_1^2\beta_2)
                +2(\alpha_2\beta_1\beta_2-\beta_1\beta_2^2)
                +(\alpha_1\beta_2^2-\beta_1\beta_2^2)
                +(\alpha_2\beta_1^2-\beta_1^2\beta_2)
                \\&=
                \alpha_1^2\alpha_2+\alpha_2^2\alpha_1
                -\beta_1^2\beta_2-\beta_2^2\beta_1=0.
                \end{align*}

                In particular since $y_1v_1u_2$ is not a term in $\bar d^2(y_1y_2u_2)$,
                \begin{equation*}
                    y_1v_1v_2+y_2v_2v_1+2y_1u_1v_2+2y_2u_2v_1+y_1v_1u_2+y_2v_2u_1
                    +2y_1u_1u_2+2y_2u_2u_1+y_1u_2^2+y_2u_1^2
                \end{equation*}
                is a generator. Thus since it is the only remaining cycle that can be hit by $x_4$ in the kernel of $\bar d^2$ at $E^{4,1}_2$,
                \begin{equation*}
                \bar{d}^4(x_4)=
                y_1v_1v_2+y_2v_2v_1+2y_1u_1v_2+2y_2u_2v_1+y_1v_1u_2+y_2v_2u_1+2y_1u_1u_2+2y_2u_2u_1+y_1u_2^2+y_2u_1^2
                \end{equation*}
                up to class representative and sign.
				\end{proof}

			\subsection{Case $Sp(2)/T^2$}
			Consider now $G=Sp(2)$.
    		By (\ref{eq:LoopSp}), we have
    		\begin{equation*}
    			H^*(\Omega(Sp(2)/T^2);\mathbb{Z})
    			\cong \Gamma_{\mathbb{Z}}[x_2,x_6] \otimes \Lambda_{\mathbb{Z}}(y_1,y_2)
    		\end{equation*}
    		where $\Gamma_{\mathbb{Z}}[x_2,x_4]$ is the integral divided polynomial algebra on variables $x_2,x_6$
    		with $|x_2|=2$ and $|x_6|=6$
    		and $\Lambda(y_1,y_2)$ is an exterior algebra generated by $y_1,y_2$
    		with $|y_1|=|y_2|=1$.
    	    By (\ref{eq:H*Sp/T}), the cohomology of $Sp(2)/T^2$ is
    		\begin{equation*}
    			H^*(Sp(2)/T^2);\mathbb{Z}) = \frac{\mathbb{Z}[\lambda_1,\lambda_2]}{\langle{\sigma^{\lambda^2}_1},{\sigma^{\lambda^2}_2}\rangle}
    		\end{equation*}
    		and
    		\begin{equation*}
    			H^*(Sp(2)/T^2 \times Sp(2)/T^2;\mathbb{Z}) = 
    			\frac{\mathbb{Z}[\alpha_1,\alpha_2]}
    			{\langle{\sigma^{\alpha^2}_1},\sigma^{\alpha^2}_2\rangle} \otimes
    			\frac{\mathbb{Z}[\beta_1,\beta_2]}
    			{\langle\sigma^{\beta^2}_1,\sigma^{\beta^2}_2\rangle}
    		\end{equation*}
    		for the cohomology of the base space and fibre of fibration (\ref{eq:evalfib}),
    		where $|\lambda_1|=|\alpha_i|=|\beta_i|=2$ for $i=1,2$.
    		Denote by $\{ \bar{E}^r,\bar{d}^r \}$ the cohomology Leray-Serre spectral sequence associated to fibration~(\ref{eq:evalfib}).
    		We again use the alternative basis
    		\begin{equation*}
    			v_i=\alpha_i-\beta_i\;\; \text{and} \;\; u_i=\beta_i
    		\end{equation*}
    		for $i=1,2$.
    		For exactly the same reasons as in Lemma~\ref{lemma:E^2_{*,1}d^2}, we get an equivalent lemma in the present case.
    		
    		\begin{lemma}\label{lemma:SpE^2_{*,1}d^2}
    			With the notation above, in the cohomology Leray-Serre spectral sequence of fibration~(\ref{eq:evalfib}),
					there is a choice of basis $y_1,y_2$ such that
					\begin{center}
							$\bar{d}^2(y_i)=v_i$
					\end{center}
					for each $i=1,2$.
    		\end{lemma}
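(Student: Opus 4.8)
The plan is to reproduce verbatim the argument of Lemma~\ref{lemma:E^2_{*,1}d^2}, since the only properties of $SU(3)/T^2$ used there were that its integral cohomology is torsion-free and free of rank $2$ in degree $2$, and that fibration~(\ref{eq:evalfib}) is the diagonal embedding. Both persist for $Sp(2)/T^2$: by (\ref{eq:H*Sp/T}) the defining relations $\sigma^{\lambda^2}_1$ and $\sigma^{\lambda^2}_2$ lie in degrees $4$ and $8$, so $H^2(Sp(2)/T^2;\mathbb{Z})$ is free on $\lambda_1,\lambda_2$ and $H^2(Sp(2)/T^2\times Sp(2)/T^2;\mathbb{Z})$ is free on $\alpha_1,\alpha_2,\beta_1,\beta_2$, equivalently on $u_1,u_2,v_1,v_2$.

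First I would note, exactly as before, that for dimensional reasons $\bar d^2$ is the only differential that can hit $\bar E^{2,0}_\ast$ --- a differential $\bar d^r$ landing in the bottom row in total degree $2$ would originate in $\bar E_r^{2-r,r-1}=0$ for $r>2$ --- and that no nonzero differential leaves $\bar E^{2,0}_\ast$. Hence $\bar E^{2,0}_\infty=H^2(Sp(2)/T^2\times Sp(2)/T^2;\mathbb{Z})/\operatorname{im}\bar d^2$, and the edge homomorphism identifies the composite $\bar E^{2,0}_2\twoheadrightarrow\bar E^{2,0}_\infty\hookrightarrow H^2(Sp(2)/T^2;\mathbb{Z})$ with $\Delta^\ast$. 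Since $\Delta^\ast$ sends $\alpha_i,\beta_i\mapsto\lambda_i$, in degree $2$ it is the surjection from the rank-$4$ group onto the rank-$2$ group $H^2(Sp(2)/T^2;\mathbb{Z})$ whose kernel is spanned by $v_1=\alpha_1-\beta_1$ and $v_2=\alpha_2-\beta_2$; therefore $\operatorname{im}\bar d^2=\langle v_1,v_2\rangle$, a free summand of rank $2$.

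Finally, $\bar E^{0,1}_2=H^1(\Omega(Sp(2)/T^2);\mathbb{Z})$ is free of rank $2$, spanned by $y_1,y_2$, so $\bar d^2\colon\bar E^{0,1}_2\to\bar E^{2,0}_2$ is an isomorphism onto $\langle v_1,v_2\rangle$; precomposing with a suitable change of basis in $\mathrm{GL}_2(\mathbb{Z})$ of the $y_i$ yields $\bar d^2(y_i)=v_i$ for $i=1,2$. I do not expect a genuine obstacle here: the content is the same bookkeeping as in the $SU(3)$ case, and the only point worth recording is that $\{v_1,v_2\}$ spans a free direct summand of $\bar E^{2,0}_2$, which is exactly what permits the integral rechoice of $y_1,y_2$.
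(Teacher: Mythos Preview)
Your proposal is correct and follows exactly the route the paper takes: the paper proves this lemma by simply saying ``for exactly the same reasons as in Lemma~\ref{lemma:E^2_{*,1}d^2}'', and your write-up is a faithful (and more explicit) unpacking of that argument. The only additional care you take---recording that $\{v_1,v_2\}$ is a free direct summand so that the integral change of basis on the $y_i$ is legitimate---is a detail the paper leaves implicit.
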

    		We now prove an equivalent of Theorem~\ref{theorem:finaldiff} for $G=Sp(2)$.
    		\begin{theorem}\label{theorem:SpPathDiff}
    			In the spectral sequence $\{ E_r,d^r \}$ up to class representative and sign on $\bar{E}^2_{2,1}$ and $\bar{E}^2_{6,1}$, the only non-trivial differentials are given by
    			\begin{equation*}
    				\bar{d}^2(x_2)=y_1v_1+y_2v_2+2y_1u_1+2y_2u_2
    			\end{equation*}
    			and
    			\begin{equation*}
    				\bar{d}^6(x_6)
    				=y_1v_1^3+4y_1v_1^2u_1+6y_1v_1u_1^2+4y_1u_1^3
    			\end{equation*}
    		\end{theorem}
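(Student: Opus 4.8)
The plan is to follow the proof of Theorem~\ref{theorem:finaldiff}, with $\Gamma_{\mathbb{Z}}[x_2,x_6]$ in place of $\Gamma_{\mathbb{Z}}[x_2,x_4]$ and the ring \eqref{eq:H*Sp/T} in place of \eqref{eq:H*SU/T}. As in the $SU(3)$ case, every differential on $\alpha_i,\beta_i$ --- equivalently on $u_i,v_i$ --- vanishes because these classes lie in the bottom row $\bar E_2^{*,0}$, and neither $x_2$ nor $x_6$ can be hit by a differential for bidegree reasons. By Lemma~\ref{lemma:SpE^2_{*,1}d^2} the classes $u_1,u_2$ survive to $\bar E_\infty$, so, comparing ranks with $H^*(Sp(2)/T^2;\mathbb{Z})$ --- the quotient of $H^*(Sp(2)/T^2\times Sp(2)/T^2;\mathbb{Z})$ by the kernel of the cup product, generated by $v_1,v_2$ --- both $x_2$ and $x_6$ must support a non-zero differential. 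For $x_2\in\bar E_2^{0,2}$ the only option is $\bar d^2\colon\bar E_2^{0,2}\to\bar E_2^{2,1}$. For $x_6\in\bar E_2^{0,6}$ the differentials $\bar d^3,\bar d^5,\bar d^7$ vanish since $H^{\mathrm{odd}}(Sp(2)/T^2\times Sp(2)/T^2;\mathbb{Z})=0$, and one checks, exactly as in Theorem~\ref{theorem:finaldiff}, that $\bar d^2$ annihilates the rows between $\bar E^{*,2}$ and $\bar E^{*,6}$, so that after adjusting $x_6$ by decomposables the only possibility left is $\bar d^6\colon\bar E_6^{0,6}\to\bar E_6^{6,1}$.

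To pin down $\bar d^2(x_2)$, take the candidate, which in the $\alpha,\beta$ coordinates equals $y_1(\alpha_1+\beta_1)+y_2(\alpha_2+\beta_2)=y_1v_1+y_2v_2+2y_1u_1+2y_2u_2$, and apply $\bar d^2$ via $\bar d^2(y_i)=v_i$ and $\bar d^2(u_i)=\bar d^2(v_i)=0$. The result is $v_1(\alpha_1+\beta_1)+v_2(\alpha_2+\beta_2)=(\alpha_1^2-\beta_1^2)+(\alpha_2^2-\beta_2^2)=\sigma^{\alpha^2}_1-\sigma^{\beta^2}_1=0$, so the candidate is a $\bar d^2$-cycle. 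Exactly as in Theorem~\ref{theorem:finaldiff}, the monomial $y_1v_1$ does not occur in $\bar d^2(y_1y_2)=y_2v_1-y_1v_2$, the only other differential out of $\bar E_2^{0,2}=\langle x_2,y_1y_2\rangle$, so the candidate is non-trivial in the $\bar d^2$-homology at $\bar E_2^{2,1}$ and is not a boundary of the already-determined differentials. Since $x_2$ must die and $\bar d^2$ is the only differential whose image can meet $\bar E_2^{2,1}$, this forces $\bar d^2(x_2)$ to be the candidate, up to class representative and sign.

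The computation of $\bar d^6(x_6)$ runs the same way one page later. The candidate is $y_1(\alpha_1^3+\alpha_1^2\beta_1+\alpha_1\beta_1^2+\beta_1^3)$, which expands to $y_1v_1^3+4y_1v_1^2u_1+6y_1v_1u_1^2+4y_1u_1^3$, and $\bar d^2$ sends it to $v_1(\alpha_1^3+\alpha_1^2\beta_1+\alpha_1\beta_1^2+\beta_1^3)=\alpha_1^4-\beta_1^4$. Since $\alpha_1^4\equiv-\alpha_1^2\alpha_2^2=-\sigma^{\alpha^2}_2$ modulo $\sigma^{\alpha^2}_1$, and likewise for $\beta$, one has $\alpha_1^4=\beta_1^4=0$ in $H^*(Sp(2)/T^2\times Sp(2)/T^2;\mathbb{Z})$, so the candidate is a $\bar d^2$-cycle; being a cycle for all the higher differentials as well for bidegree reasons, it descends to $\bar E_6^{6,1}$. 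There one verifies its class is non-zero, that is, not in the image of the $\bar d^2$- and $\bar d^4$-differentials landing at bidegree $(6,1)$; being the surviving cycle that $x_6$ must hit, it therefore represents $\bar d^6(x_6)$ up to class representative and sign, and the apparent asymmetry between the indices $1$ and $2$ is only a choice of representative in $\bar E_6^{6,1}$.

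It remains to see there are no other non-trivial differentials: all differentials on $\alpha_i,\beta_i$ and on $y_1y_2$ have been accounted for, the divided-power generators $(x_2)_k$ and $(x_6)_k$ are controlled by the multiplicative and divided-power structure exactly as for $b_{10}$ in Proposition~\ref{prop:LoopG2}, and a page-by-page rank count against the Poincar\'e polynomial of $H^*(Sp(2)/T^2;\mathbb{Z})$ shows $\bar E_\infty$ has the correct size. The main obstacle I anticipate is the bookkeeping behind the first and third paragraphs: computing the $\bar d^2$-homology of the rows of $\bar E_2$ below $x_6$ finely enough to conclude that only $\bar d^6$ can emanate from $x_6$, and that the candidate remains non-trivial and non-bounding on $\bar E_6$. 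That is precisely where the relations $\sigma^{\alpha^2}_i$, $\sigma^{\beta^2}_i$ and the finite-dimensionality of the cohomology ring interact, and it is considerably more delicate than for $SU(3)$ because the relevant differential now has length $6$.
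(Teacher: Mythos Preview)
Your proposal is correct and follows essentially the same approach as the paper's proof: identify a $\bar d^2$-cycle in $\bar E_2^{2,1}$ (respectively $\bar E_2^{6,1}$) that is not a boundary, and argue it must be the image of $x_2$ (respectively $x_6$). Your computations are in fact a bit cleaner than the paper's, since you write the candidates in the factored forms $y_1(\alpha_1+\beta_1)+y_2(\alpha_2+\beta_2)$ and $y_1(\alpha_1^3+\alpha_1^2\beta_1+\alpha_1\beta_1^2+\beta_1^3)$ and observe that $\bar d^2$ produces $\alpha_i^2-\beta_i^2$ and $\alpha_1^4-\beta_1^4$ directly, whereas the paper expands each monomial $y_1v_1^a u_1^b$ separately before summing; the paper also does not spell out the row-by-row $\bar d^2$-homology argument you flag in your last paragraph, simply asserting the candidate is ``the only remaining cycle that can be hit''.
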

    		
    		\begin{proof}
    			All differentials on $\alpha_i$ and $\beta_i$ are trivial for dimensional reason.
				So the only remaining differentials left to determine are those on
				$\langle x_2 \rangle$ and $\langle x_6 \rangle$.
				For dimensional reasons, the elements $x_2,x_6$ cannot be in the image of any differential.
				By Lemma~\ref{lemma:SpE^2_{*,1}d^2}, the generators $u_1,u_2$ must survive to the $\bar{E}_{\infty}$-page,
				so generators $x_2$ and $x_6$ cannot.	
				The image of $\bar{d}^{2}\colon\bar{E}_{2}^{0,2}\to \bar{E}_{2}^{2,1}$ will be a class in $\bar{E}_{2}^{2,1}$
				in the kernel of $\bar{d}^2$ generated by a single element,
				not in the image of elements generated by $y_i,u_i$ and $v_i$ alone.
    			
				We have $\bar{d}^2(u_i)=\bar{d}^2(v_i)=0$ and by Lemma~\ref{lemma:SpE^2_{*,1}d^2} we  assume that $\bar{d}^2(y_i)=v_i$ for each $i=1,2$.
				Non-zero generators in $\bar{E}_2^{2,1}$ can be expressed in the form
				\begin{center}
					$y_k u_i$ or $y_k v_i$
				\end{center}
				and non-zero generators in $\bar{E}_2^{6,1}$ can be expressed in the form
				\begin{center}
					$y_k u_{i_1} u_{i_2} u_{i_3}$, $y_k v_{i_1} u_{i_2} u_{i_3}$, $y_k v_{i_1} v_{i_2} u_{i_3}$ or $y_k v_{i_1} v_{i_2} v_{i_3}$
				\end{center}
				for some $1\leq i_2,i_2,i_3,k \leq 2$.
                Notice that
                \begin{align*}
                    \bar{d}^2(y_kv_i)&=v_k^2
                    =\alpha_k^2-2\alpha_k\beta_k+\beta_k^2 \\
                    \bar{d}^2(y_ku_k)&=v_ku_k
                    =\alpha_k\beta_k-\beta_k^2
                \end{align*}
                so
                \begin{equation*}
                    \bar{d}^2(y_1v_1+y_2v_2+2y_1u_1+2y_2u_2)
                    =\alpha_1^2+\alpha_2^2-\beta_1^2-\beta_2^2=0.
                \end{equation*}
                 In particular since $y_1v_1$ is not a term in $\bar d^2(y_1y_2)=y_2v_1-y_1v_2$,
                \begin{equation*}
                    y_1v_1+y_2v_2+2y_1u_1+2y_2u_2
                \end{equation*}
               is a generator. Thus since it is the only remaining cycle that can be hit by $x_2$ in the kernel of $\bar d^2$ at $E^{2,1}_2$,
                \begin{equation*}
                    \bar{d}^2(x_2)=y_1v_1+y_2v_2+2y_1u_1+2y_2u_2
                \end{equation*}
                up to class representative and sign.
                Similarly,
                \begin{align*}
                    \bar{d}^2(y_1v_1^3)&=v_1^4
                    =\alpha_1^4
                    -4\alpha_1^3\beta_1
                    +6\alpha_1^2\beta_1^2
                    -4\alpha_1\beta_1^3
                    +\beta_1^4 \\
                    \bar{d}^2(y_1v_1^2u_1)&=v_1^3u_1
                    =\alpha_1^3\beta_1-3\alpha_1^2\beta_1^2
                    +3\alpha_1\beta_1^3-\beta_1^4 \\
                    \bar{d}^2(y_1v_1u_1^2)&=v_1^2u_1^2
                    =\alpha_1^2\beta_1^2-2\alpha_1\beta_1^3
                    +\beta_1^4 \\
                    \bar{d}^2(y_1u_1^3)&=v_1u_1^3
                    =\alpha_1\beta_1^3-\beta_1^4.
                \end{align*}
                Hence
                \begin{equation*}
                    \bar{d}^2(y_1v_1^3+4y_1v_1^2u_1+6y_1v_1u_1^2
                    +4y_1u_1^3)=
                    \alpha_1^4-\beta_1^4=0.
                \end{equation*}
                In particular since $y_1v_1^3$, is not a term in $\bar d^2(y_1y_2v_1^2)$,                 \begin{equation*}
                    y_1v_1^3+4y_1v_1^2u_1+6y_1v_1u_1^2
                    +4y_1u_1^3
                \end{equation*}
                is a generator. Thus since it is the only remaining cycle that can be hit by $x_6$ in the kernel of $\bar d^2$ at $E^{6,1}_2$,
                
                \begin{equation*}
                \bar{d}^6(x_6)=
                y_1v_1^3+4y_1v_1^2u_1+6y_1v_1u_1^2+4y_1u_1^3
                \end{equation*}
                up to class representative and sign.
    		\end{proof}

			\subsection{Case $G_2/T^2$}
			
			    Consider now $G=G_2$.
    			Using (\ref{eq:H*G2/T}) and Proposition~\ref{prop:LoopG2},
    			in the following argument we use the notation
    			for the cohomology of the base space and fibre in fibration (\ref{eq:evalfib})
				\begin{align*}
					H^*(G_2/T^2\times G_2/T^2;\mathbb{Z})
					&= \frac{\mathbb{Z}[\alpha_1,\alpha_2,l_3]}{\langle\sigma_2^\alpha,2l_3-\sigma_3^\alpha,l_3^2\rangle}
					\otimes
					\frac{\mathbb{Z}[\beta_1,\beta_2,s_3]}{\langle\sigma_2^\beta,2s_3-\sigma_3^\beta,s_3^2\rangle}\\
					H^*(\Omega(G_2/T^2);\mathbb{Z})
					&=\frac{\mathbb{Z}[({a}_2)_1,({a}_2)_2,\dots]}{\langle{a}_2^{m}-(m!/2^{\lfloor\frac{m}{2}\rfloor})({a}_2)_{m}\rangle}\otimes \Gamma_{\mathbb{Z}}[{b}_{10}]
					\otimes\Lambda_\mathbb{Z}(y_1y_2),\\
				\end{align*}
    			where $|\alpha_1|=|\beta_1|=|\alpha_2|=|\beta_2|=2$, $|y_1|=|y_2|=1$, $|a_2|=2$, $|b_{10}|=10$,
    			$|l_3|=|s_3|=6$.
			    Again we use the change of basis $u_i=\beta_i$ and $v_i=\alpha_i-\beta_i$.
				In addition we also make the change of basis
				\begin{equation*}
				    \theta=l_3-s_3 \text{ and } \psi=l_3.
				\end{equation*}
			
			\begin{theorem}\label{theorem:G2PathDiff}
			   In the spectral sequence $\{ \bar{E}_r,\bar{d}^r \}$ up to class representatives and sign on $\bar{E}^2_{2,1}$ and $\bar{E}^2_{10,1}$, the non-trivial differentials are given by
			   \begin{equation*}
			       \bar{d}^2(a_2)=
			       y_1(u_2+v_2+2u_1)+y_2(u_1+v_1+2u_2)
			   \end{equation*}
			   \begin{equation*}
			       \bar{d}^4(a_2(y_1(u_2+v_2+2u_1)+y_2(u_1+v_1+2u_2)))
			       =\theta
			   \end{equation*}
			   and
			   \begin{equation*}
			       \bar{d}^{10}(b_{10})=
			       y_1(\theta v_1^2+3\theta v_1u_1+3\theta u_1^2+2\psi v_1^2+3\psi v_1u_1+3y_1\psi u_1^2)
			   \end{equation*}
			\end{theorem}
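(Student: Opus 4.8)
The plan is to run the scheme of Theorems~\ref{theorem:finaldiff} and~\ref{theorem:SpPathDiff}, the essential new feature being that the twisted divided-power structure of $H^{*}(\Omega G_{2})$ from Proposition~\ref{prop:LoopG2} — ultimately the relation $2x_{3}^{2}=0$ in $H^{*}(G_{2})$, which is also responsible for the factor of $2$ in $2t_{3}=\sigma_{3}$ — forces a length-$4$ (secondary) differential with no counterpart for $SU(3)$ or $Sp(2)$. First I would record the analogue of Lemma~\ref{lemma:E^2_{*,1}d^2}: since $\bar{E}_{2}^{0,1}=\langle y_{1},y_{2}\rangle$, $\bar{E}_{2}^{2,0}=H^{2}(G_{2}/T^{2}\times G_{2}/T^{2})=\langle\alpha_{1},\alpha_{2},\beta_{1},\beta_{2}\rangle$, and the spectral sequence converges to $H^{*}(G_{2}/T^{2})$, the edge homomorphism $\bar{E}_{2}^{*,0}\to H^{*}(G_{2}/T^{2})$ is the cup product $\Delta^{*}$, so $\mathrm{im}\big(\bar{d}^{2}\colon\bar{E}_{2}^{0,1}\to\bar{E}_{2}^{2,0}\big)=\ker\Delta^{*}=\langle v_{1},v_{2}\rangle$ and one may choose $y_{i}$ with $\bar{d}^{2}(y_{i})=v_{i}$. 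Surjectivity of $\Delta^{*}$ also shows every class of positive fibre degree — $a_{2}$, every $(a_{2})_{m}$, $b_{10}$ — must die, while $u_{i},v_{i},\theta,\psi$ are permanent cycles; moreover $H^{*}(G_{2}/T^{2}\times G_{2}/T^{2})$ is concentrated in even degrees, so $\bar{E}_{r}^{p,q}=0$ for odd $p$, eliminating most candidate differentials by parity. For $\bar{d}^{2}(a_{2})$ I would argue exactly as for $SU(3)$: the only non-trivial differential out of $a_{2}\in\bar{E}_{2}^{0,2}$ is $\bar{d}^{2}\colon\bar{E}_{2}^{0,2}\to\bar{E}_{2}^{2,1}$, and since $\bar{E}_{\infty}^{2,1}$ is a subquotient of $H^{3}(G_{2}/T^{2})=0$ and only $\bar{d}^{2}$ acts at bidegree $(2,1)$, the images $\bar{d}^{2}(a_{2})$ and $\bar{d}^{2}(y_{1}y_{2})=v_{1}y_{2}-y_{1}v_{2}$ together generate $\ker\big(\bar{d}^{2}\colon\bar{E}_{2}^{2,1}\to\bar{E}_{2}^{4,0}\big)$; I would exhibit the displayed $\mathbb{Z}$-combination of the monomials $y_{k}u_{i},y_{k}v_{i}$, check it is a $\bar{d}^{2}$-cycle by expanding in $H^{*}(G_{2}/T^{2}\times G_{2}/T^{2})$ with $\sigma_{2}^{\alpha}=\sigma_{2}^{\beta}=0$, and check it is not a multiple of $\bar{d}^{2}(y_{1}y_{2})$, which pins it down up to class representative and sign.

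Next comes the secondary differential, which is the heart of the matter. The Leibniz rule gives $\bar{d}^{2}(a_{2}^{2})=2\,a_{2}\,\bar{d}^{2}(a_{2})$, whereas $\big(\bar{d}^{2}(a_{2})\big)^{2}=0$ since $\bar{d}^{2}(a_{2})$ carries a factor of $y_{1}$ or $y_{2}$; hence $a_{2}\,\bar{d}^{2}(a_{2})\in\bar{E}_{2}^{2,3}$ is a $\bar{d}^{2}$-cycle that is not a $\bar{d}^{2}$-boundary (only twice it is), and it is a $\bar{d}^{3}$-cycle by parity, so it survives to a non-zero class of order $2$ on $\bar{E}_{4}^{2,3}$. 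On the target side, $\theta=l_{3}-s_{3}$ must die because $\Delta^{*}(\theta)=t_{3}-t_{3}=0$; it cannot be hit by $\bar{d}^{2}$, since $\mathrm{im}\big(\bar{d}^{2}\colon\bar{E}_{2}^{4,1}\to\bar{E}_{2}^{6,0}\big)=v_{1}H^{4}+v_{2}H^{4}$ consists of polynomials in the $\alpha_{i},\beta_{i}$ and so contains no multiple of $l_{3}-s_{3}$, yet it does contain $2\theta=\sigma_{3}^{\alpha}-\sigma_{3}^{\beta}$, so $\theta$ descends to a non-zero class of order $2$ on $\bar{E}_{4}^{6,0}$; and the only other possible source for $\theta$, namely $\bar{E}_{6}^{0,5}$, vanishes because the only degree-$5$ fibre classes $a_{2}^{2}y_{1},a_{2}^{2}y_{2}$ already support non-zero $\bar{d}^{2}$. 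By elimination $\bar{d}^{4}\big(a_{2}\,\bar{d}^{2}(a_{2})\big)=\theta$ up to class representative and sign.

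Finally, for $\bar{d}^{10}(b_{10})$ I would propagate the differentials already found on $a_{2}$ and the $(a_{2})_{m}$ up the pages to see that all intervening rows are annihilated, so that the first possibly non-trivial differential on the class $b_{10}$ is $\bar{d}^{10}\colon\bar{E}_{10}^{0,10}\to\bar{E}_{10}^{10,1}$; its image must be the generator of the residual kernel at bidegree $(10,1)$ that is not produced from $y_{k},u_{i},v_{i},\theta,\psi$ by earlier differentials, and I would read off the displayed degree-$11$ class by direct expansion, again using $\sigma_{2}^{\alpha}=\sigma_{2}^{\beta}=0$ and $2t_{3}=\sigma_{3}$. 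The hard part will be the page-by-page bookkeeping behind the two ``by elimination'' steps and the verification that no other class supports a non-trivial differential: one must control $\bar{E}_{3}$ and $\bar{E}_{4}$ precisely enough to see that $\theta$ — and later the degree-$11$ class — is neither hit earlier nor supports a later differential, and that the relevant order-$2$ class is the unique available source. This is exactly where the divided-power twist of Proposition~\ref{prop:LoopG2} has to be played off against the $2$-divisibility in $H^{6}(G_{2}/T^{2})$, and the explicit ideal identifications in $H^{*}(G_{2}/T^{2}\times G_{2}/T^{2})$ are most efficiently handled with the Gr\"obner-basis machinery recalled in Theorem~\ref{thm:GrobnerOver}.
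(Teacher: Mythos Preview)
Your proposal is correct and follows essentially the same strategy as the paper's proof: establish $\bar d^{2}(y_i)=v_i$ and $\bar d^{2}(a_2)=\zeta$ by the $SU(3)$ argument, exploit the twisted divided-power relation $(a_2)_2=a_2^{2}$ so that $\bar d^{2}(a_2^{2})=2a_2\zeta$ produces a $2$-torsion class $a_2\zeta$ on $\bar E_3$, match it with the $2$-torsion class $\theta$ on $\bar E_3^{6,0}$ (coming from $2\theta=\sigma_3^{\alpha}-\sigma_3^{\beta}\in\operatorname{im}\bar d^{2}$) via $\bar d^{4}$, and then handle $b_{10}$ by the residual-kernel computation with explicit expansion. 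The only cosmetic difference is that you run the $\bar d^{4}$ elimination from the target side (arguing $\theta$ must be hit because $\Delta^{*}(\theta)=0$, and ruling out $\bar E_6^{0,5}$), whereas the paper runs it from the source side (arguing $a_2\zeta$ must die because $H^{*}(G_2/T^{2};\mathbb{Z})$ is torsion-free, and then invoking dimensional reasons); these are the two halves of the same elimination and either direction suffices.
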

			
			\begin{proof}
					Similarly to previous cases, the differential is given by
					\begin{equation*}
						\bar{d}^2(y_i)=v_i.
					\end{equation*}
					The differential on $a_2$ is obtained as in Theorem~\ref{theorem:finaldiff}.
				    and is given by
			        \begin{equation*}
			            \bar{d}^2({a}_2)
			            =y_1(u_2+v_2+2u_1)+y_2(u_1+v_1+2u_2)
				    \end{equation*}
				    which we denote by $\zeta$.
					
					The reminder of the augment is also similar to previous cases, however there are two exceptions.
					Firstly, since 
					\begin{equation*}
					    \frac{\mathbb{Z}[({a}_2)_1,({a}_2)_2,\dots]}{\langle{a}_2^{m}-(m!/2^{\lfloor\frac{m}{2}\rfloor})({a}_2)_{m}\rangle}
					\end{equation*}
					is not a divided polynomials algebra,
					\begin{align*}\label{eq:G2mult2diff}
						\bar{d}^2(({a}_2)_m)&=m\frac{2^{\lfloor \frac{m}{2}\rfloor}}{m!}\bar{d}^2({a}_2){a}_2^{m-1}
						=\frac{2^{\lfloor \frac{m}{2}\rfloor}}{(m-1)!}\bar{d}^2({a}_2)\frac{(m-1)!}{2^{\lfloor \frac{m-1}{2}\rfloor}}({a}_2)_{m-1} \\
						&=\frac{2^{\lfloor \frac{m}{2}\rfloor}}{2^{\lfloor \frac{m-1}{2}\rfloor}}({a}_2)_{m-1}\bar{d}^2({a}_2)=
						\begin{cases} 
						    2({a}_2)_{m-1}\zeta  &\mbox{for } m \text{ even}
		                    \\
		                    ({a}_2)_{m-1}d\zeta  &\mbox{for } m \text{ odd}
		                    \end{cases}
					\end{align*}					
					for each $m\geq 1$.
					Hence the differential is not surejctive and for odd $m\geq 1$, $[(a_2)_m\zeta]$ multiplicatively generates $2$-torsion on the $E_3$-page. 
					Secondly, the ideal $[2s_3-\alpha_1^3, 2l_3-\beta_1^3]$ does not correspond to any elements of the kernel of $\bar{d}^2$, however due to these relations
					\begin{equation*} 
						\bar{d}^2(y_1(v_1^2+3v_1u_1+3u_1^2))
						=2\theta
					\end{equation*}
					also multiplicatively generates $2$-torsion on the $\bar{E}_3$-page.
					If $a_2\zeta$ survived to the $\bar{E}_\infty$-page, this would imply that for
					dimensional reasons after resolving extension problems there would be 
					torsion class in $H^*(G_2/T^2;\mathbb{Z})$.
					However there is no torsion in $H^*(G_2/T^2;\mathbb{Z})$, so $[a_2\zeta]$ must be trivial by the $\bar{E}_\infty$-page.
					Hence for dimensional reason the only possibility is
					\begin{equation*}
						\bar{d}^4(a_2\zeta)=\theta
					\end{equation*}
					up to class representative and sign.
					Since $[a_2\zeta]$ and $[\theta]$ generate all $2$-torsion, for $r\leq 9$, $\bar{d}^r(b_{10})=0$ and there is no torsion by the $\bar{E}_5$-page.		
					In particular, we determine the differentials on $b_{10}$
					in the same way as in previous cases.
					Notice that
					\begin{align*}
						\bar{d}^2(y_1\theta v_1^2)& =v_1^3\theta	 =2s_3^2-4s_3l_3-3s_3\alpha_2^2\beta_1+3\alpha_1^2l_3\beta_1+3s_3\alpha_1\beta_1^2-3\alpha_1l_3\beta_1^2+2l_3^2 ,\\
						\bar{d}^2(y_1\theta v_1u_1)&=\theta v_1^2u_1=s_3\alpha_1^2\beta_1-2s_3\alpha_1\beta_1^2-\alpha_1^2l_3\beta_1+2\alpha_1l_3\beta_1^2+2s_3l_3-2l_3^2 ,\\
						\bar{d}^2(y_1\theta u_1^2)& =\theta v_1u_1^2=s_3\alpha_1\beta_1^2-2s_3l_3-\alpha_1l_3\beta_1^2+2l_3^2 ,\\
						\bar{d}^2(y_1\psi v_1^2)&    = v_1^3\psi	      =2s_3l_3-3\alpha_1^2l_3\beta_1+3\alpha_1l_3\beta_1^2-2l_3^2 ,\\
						\bar{d}^2(y_1\psi v_1u_1)&   =v_1^2 \psi u_1  =\alpha_1^2l_3\beta_1-2\alpha_1l_3\beta_1^2+2l_3^2 ,\\
						\bar{d}^2(y_1\psi u_1^2)&    =v_1\psi u_1^2   =\alpha_1l_3\beta_1^2-2l_3^2.
					\end{align*}
					Hence
					\begin{equation*}
						\bar{d}^2(y_1(\theta v_1^2+3\theta v_1u_1+3\theta u_1^2+2\psi v_1^2+3\psi v_1u_1+3\psi u_1^2))=2s_3^2-2l_3^2=0
					\end{equation*}
					and for the same reasons as in previous cases 
					\begin{equation*}
    					\bar{d}^{10}(b_{10})=
    			         y_1(\theta v_1^2+3\theta v_1u_1+3\theta u_1^2+2\psi v_1^2+3\psi v_1u_1+3y_1\psi u_1^2)
					\end{equation*}
					up to class representative and sign.
				\end{proof}

			\section{Differentials in the cohomology Leray-Serre spectral sequence of the evaluation fibration}\label{sec:diff}
			
			Throughout the following argument we consider the map $\phi$ of fibrations between the evaluation fibration of complete flag manifold $G/T$  
			and the diagonal fibration
			given by the following commutative diagram
			\begin{equation}\label{fig:fibcd}
						\xymatrix{
							{\Omega(G/T)} \ar[r]^(.5){} \ar[d]^(.45){id} & {\Lambda(
							G/T)} \ar[r]^{eval} \ar[d]^(.45){eval}  & {G/T} \ar[d]^(.45){\Delta} \\
							{\Omega(G/T)} \ar[r]^(.45){}   							 & {G/T} 	 \ar[r]^-(.5){\Delta}  					 & {G/T\times G/T}.}
			\end{equation}
			Since we assume that $G$, hence $G/T$ is simply connected, the cohomology Leray-Serre spectral sequence $\{ E_r,d^r \}$ associated with the evaluation fibration converges.
			Hence $\phi$ induces a map of spectral sequences $\phi^*:\{ \bar{E}_r,\bar{d}^r \} \to \{ E_r,d^r \}$. 
			More precisely, for each $r\geq 2$ and $a,b \in \mathbb{Z}$ there is a commutative diagram
			\begin{equation}\label{fig:phicd}
						\xymatrix{
							{\bar{E}_r^{a,b}} \ar[r]^(.4){\bar{d}^r} \ar[d]^(.46){\phi^*} & {\bar{E}_r^{a+r,b-r+1}} \ar[d]^(.46){\phi^*}  \\
							{E_r^{a,b}} \ar[r]^(.4){d^r}   			& {E_r^{a+r,b-r+1}}}
					\end{equation}
			where $\phi^*$, for each $r$, is the induced map on the homology of the previous page, beginning as the map induced on the tensor on the $E_2$-pages
			by the maps $id \colon \Omega(G/T)\to\Omega(G/T)$ and $\Delta \colon G/T \to G/T \times G/T$.

			\subsection{Case $SU(3)/T^2$}\label{sec:fdiffSU}
			Let $G=SU(3)$.
			By (\ref{eq:OmegaG/T}) and \eqref{eq:H*SU/T}, we have
			\begin{equation*}
			H^*(\Omega(SU(3)/T^2);\mathbb{Z})\cong \Gamma_{\mathbb{Z}}(x'_2,x'_4)\otimes\Lambda_{\mathbb{Z}}(y'_1,y'_{2})
			\end{equation*}
			and
			\begin{equation*}
			\;\;\; H^*(SU(3)/T^2;\mathbb{Z})\cong \frac{\mathbb{Z}[\gamma_1,\gamma_2]}{\langle\sigma_{2}^\gamma,\sigma_{3}^\gamma\rangle}
			\end{equation*}
			where $|y'_i|=1, |\gamma_j|=2, |x'_{2i}|=2i$ for each $1\leq i\leq n,1\leq j\leq n+1$. 
			Next we determine all the differentials in $\{E_r,d^r \}$ when $G=SU(3)$.	
			
			\begin{theorem}\label{theorem:allDiff}
			The only non-zero differentials on generators of the $E_2$-page of $\{E_r,d^r \}$ are, up to class representative and sign, given by
				\begin{equation*}
					d^{2}(x'_{2})
					=
					y'_1(2\gamma_1+\gamma_2)
					+y'_2(\gamma_1+2\gamma_2)
				\end{equation*}
				and
				\begin{equation*}
					d^{4}(x'_4)
					=y'_1(\gamma_2^2+2\gamma_1\gamma_2)
					+y'_2(\gamma_1^2+2\gamma_1\gamma_2).
				\end{equation*}
			\end{theorem}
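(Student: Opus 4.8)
The plan is to deduce the statement from Theorem~\ref{theorem:finaldiff} by naturality, transporting the already computed differentials of the diagonal spectral sequence $\{\bar E_r,\bar d^r\}$ along the map $\phi^*\colon\{\bar E_r,\bar d^r\}\to\{E_r,d^r\}$ induced by the map of fibrations~\eqref{fig:fibcd}. The first step is to pin down $\phi^*$ on the $E_2$-pages. Since $\phi$ restricts to the identity on the common fibre $\Omega(SU(3)/T^2)$ and to the diagonal $\Delta\colon SU(3)/T^2\to SU(3)/T^2\times SU(3)/T^2$ on base spaces, $\phi^*$ on $E_2$ is $\Delta^*\otimes\mathrm{id}$, a map of rings. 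As both projections $SU(3)/T^2\times SU(3)/T^2\to SU(3)/T^2$ agree after precomposing with $\Delta$, we have $\Delta^*(\alpha_i)=\Delta^*(\beta_i)=\gamma_i$, hence $\phi^*(u_i)=\gamma_i$ and $\phi^*(v_i)=0$, while on the fibre $\phi^*(y_i)=y_i'$, $\phi^*(x_2)=x_2'$ and $\phi^*(x_4)=x_4'$ (the last two up to the indeterminacy by $y_1'y_2'$ in fibre degree $2$ and by the corresponding degree-$4$ classes, which is the source of the ``up to class representative'' clause).

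Next I would run through the six algebra generators $\gamma_1,\gamma_2,y_1',y_2',x_2',x_4'$. The classes $\gamma_i\in E_2^{2,0}$ lie on the bottom row, so they are permanent cycles; for $y_i'\in E_2^{0,1}$ the only possible differential is $d^2$, and commutativity of~\eqref{fig:phicd} together with Lemma~\ref{lemma:E^2_{*,1}d^2} gives $d^2(y_i')=\phi^*(\bar d^2(y_i))=\phi^*(v_i)=0$. For $x_2'$, fibre degree forces the only possible differential to be $d^2$, and $d^2(x_2')=\phi^*(\bar d^2(x_2))$; substituting the formula of Theorem~\ref{theorem:finaldiff} and applying $\phi^*$ (which annihilates every monomial containing a $v_i$ and sends $u_i\mapsto\gamma_i$, $y_i\mapsto y_i'$) yields $d^2(x_2')=y_1'(2\gamma_1+\gamma_2)+y_2'(\gamma_1+2\gamma_2)$. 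For $x_4'$ I would first observe, again by naturality, that $d^2(x_4')=\phi^*(\bar d^2(x_4))=0$ and $d^3(x_4')=\phi^*(\bar d^3(x_4))=0$, since Theorem~\ref{theorem:finaldiff} shows $x_4$ supports neither a $\bar d^2$ nor a $\bar d^3$; hence $x_4'$ survives to the $E_4$-page and $d^4(x_4')=\phi^*(\bar d^4(x_4))=y_1'(\gamma_2^2+2\gamma_1\gamma_2)+y_2'(\gamma_1^2+2\gamma_1\gamma_2)$ by the same substitution. Finally I would check these images are non-zero: $H^2(SU(3)/T^2)$ is free on $\gamma_1,\gamma_2$, so $2\gamma_1+\gamma_2$ and $\gamma_1+2\gamma_2$ are non-zero, and in degree $4$ the only relation is $\sigma_2^\gamma=\gamma_1^2+\gamma_2^2+\gamma_1\gamma_2=0$, modulo which $\gamma_2^2+2\gamma_1\gamma_2\equiv-\gamma_1^2+\gamma_1\gamma_2$ and $\gamma_1^2+2\gamma_1\gamma_2$ remain non-zero. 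Thus $x_2'$ and $x_4'$ die on the $E_3$- and $E_5$-pages respectively, leaving no room for further differentials on them, and all six generators are accounted for.

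The argument is essentially bookkeeping once $\phi^*$ has been identified; the one point requiring care is the appeal to naturality at page $r=4$ for $x_4$, which is legitimate only after one knows that $x_4'$ represents a class on the $E_3$- and $E_4$-pages, and this is exactly what the vanishing of $\bar d^2(x_4)$ and $\bar d^3(x_4)$ on the diagonal side, pushed through~\eqref{fig:phicd} at the lower pages, supplies. Everything else reduces to expanding the explicit polynomials of Theorem~\ref{theorem:finaldiff} under $\phi^*$.
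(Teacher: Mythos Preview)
Your argument is correct and follows essentially the same route as the paper: identify $\phi^*$ on the $E_2$-page via the diagonal map and the identity on the fibre, then push the explicit formulae of Theorem~\ref{theorem:finaldiff} through the commutative square~\eqref{fig:phicd}. You are in fact a bit more careful than the paper in explaining why the naturality argument is legitimate at page~$4$ (by first checking $d^2(x'_4)=d^3(x'_4)=0$) and in verifying that the resulting images are non-zero; the only minor imprecision is the phrase ``fibre degree forces the only possible differential to be $d^2$'' for $x'_2$, where it is really the vanishing of the odd-degree cohomology of $SU(3)/T^2$ (so $E_3^{3,0}=0$) that rules out $d^3$.
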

			
			\begin{proof}
				The identity $id \colon \Omega(SU(3)/T^2)\to\Omega(SU(3)/T^2)$ induces the identity map on cohomology, while the diagonal map $\Delta \colon SU(3)/T^2 \to SU(3)/T^2 \times SU(3)/T^2$ induces the cup.
				Hence, by choosing generators in $\{E_2,d^2 \}$, we may assume that for $i=1,2$
				\begin{equation*}
    				\phi^*(y_i)
    				=y'_i,\;\;\phi^*(x_i)
    				=x'_i\;\;\text{and}\;\;\phi^*(\alpha_i)
    				=\phi^*(\beta_i)
    				=\phi^*(u_i)
    				=\gamma_i
				\end{equation*}
			    Therefore, $\phi^*(v_i)=0$ for $i=1,2$.
				For dimensional reasons, the only possibly non-zero differential on generators $y'_i$ in $\{ E_r,d^r \}$ is $d^{2}$.
				However using commutative diagram (\ref{fig:phicd}) and Lemma~\ref{lemma:E^2_{*,1}d^2}, we have
				\begin{center}
				$d^2(y'_i)=d^2(\phi^*(y_i))=\phi^*(\bar{d}^2(y_i))=\phi^*(v_i)=0$.
				\end{center}
				Using commutative diagram (\ref{fig:phicd}) and Theorem~\ref{theorem:finaldiff}, we have up to class representative and sign
				\begin{align*}
				    d^2(x'_2)
				    &=\phi^*(\bar{d}^2(x_2))\\
				    &=\phi^*(y_1v_1+y_2v_2+y_1v_2+2y_1u_1
				    +2y_2u_2+y_1u_2+y_2u_1)\\
					&=2y'_1\gamma_1+2y'_2\gamma_2
					+y'_1\gamma_2+y'_2\gamma_1
				\end{align*}
				and
				\begin{align*}
    				d^4(x'_4)&=\phi^*(\bar{d}^4(x_4)) \\
    				&=\phi^*(y_1v_1v_2+y_2v_2v_1+2y_1u_1v_2
    				+2y_2u_2v_1+y_1v_1u_2 \\
    				&\;\;\;\;+y_2v_2u_1+2y_1u_1u_2
    				+2y_2u_2u_1+y_1u_2^2+y_2u_1^2) \\
    				&=2y'_2\gamma_1\gamma_2+2y'_1\gamma_1\gamma_2
					+y'_2\gamma_1^2+y'_1\gamma_2^2.
				\end{align*}
				Differentials on generators $\gamma_i$ for each $i=1,2$ are zero for dimensional reasons.
			\end{proof}

            \subsection{Case $Sp(2)/T^2$}
			
    		Just as we did in Theorem~\ref{theorem:allDiff},
    		we can now use the results of Theorem~\ref{theorem:SpPathDiff} and diagram (\ref{fig:fibcd}) to deduce the differentials in the 
    		cohomology Leray-Serre spectral sequence $\{ E_r,d^r \}$ associated to the evaluation fibration of $Sp(2)/T^2$.
    		For the rest of the section, we denote the cohomology algebras of the base space and fibre of the
    		evaluation fibration by
    		\begin{equation*}\label{eq:spLoppCoHomology}
    			H^*(\Omega(Sp(2)/T^2);\mathbb{Z})=\Gamma_{\mathbb{Z}}(x'_2,x'_6)\otimes \Lambda_{\mathbb{Z}}(y'_1,y'_2)
    		\end{equation*}
    		and
    		\begin{equation*}
    			H^*(Sp(2)/T^2;\mathbb{Z})=\frac{\mathbb{Z}[\gamma_1,\gamma_2]}{\langle\sigma_1^2,\dots,\sigma_{n}^2\rangle}
    		\end{equation*}
    		where $|{y'}_1|=1=|y'_2|$, $|\gamma_1|=2=|\gamma_2|$, $|x'_2|=2$,  $|x'_6|$=6 and $\sigma_1^2,\dots,\sigma_n^2$ 
    		are the elementary symmetric polynomials in variables $\gamma_1^2,\gamma_2^2$.
    		
    		\begin{theorem}\label{theorem:SpDiff}
    			The only non-zero differentials on generators of the $E_2$-page of $\{E_r,d^r \}$
    			are, up to class representative and sign, given by
    			\begin{equation*}
    				d^2(x'_2)=2y'_1\gamma_1+2y'_2\gamma_2
    			\end{equation*}
    			and
    			\begin{equation*}
    			    d^6(x'_6)=y'_1\gamma_1^3.
    			\end{equation*}
    		\end{theorem}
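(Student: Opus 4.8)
The plan is to run the same map‑of‑spectral‑sequences argument as in the proof of Theorem~\ref{theorem:allDiff}, now feeding in the path–fibration computation of Theorem~\ref{theorem:SpPathDiff} in place of Theorem~\ref{theorem:finaldiff}. I would start from the map $\phi$ of fibrations in diagram~(\ref{fig:fibcd}) and the induced morphism of spectral sequences $\phi^*\colon\{\bar E_r,\bar d^r\}\to\{E_r,d^r\}$ recorded in diagram~(\ref{fig:phicd}). On $E_2$-pages $\phi^*$ is the tensor of the identity of $\Omega(Sp(2)/T^2)$ with the map induced by the diagonal $\Delta\colon Sp(2)/T^2\to Sp(2)/T^2\times Sp(2)/T^2$, which on cohomology is the cup product; so one may pick generators with
\begin{equation*}
\phi^*(y_i)=y'_i,\qquad \phi^*(x_2)=x'_2,\qquad \phi^*(x_6)=x'_6,\qquad \phi^*(\alpha_i)=\phi^*(\beta_i)=\phi^*(u_i)=\gamma_i
\end{equation*}
for $i=1,2$, whence $\phi^*(v_i)=\phi^*(\alpha_i-\beta_i)=0$.

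First I would dispose of the generators carrying no interesting differential. The classes $\gamma_1,\gamma_2$ lie in $E_2^{2,0}$, so they are permanent cycles for dimensional reasons. For $y'_i$ the only candidate is $d^2$, and by commutativity of diagram~(\ref{fig:phicd}) together with Lemma~\ref{lemma:SpE^2_{*,1}d^2},
\begin{equation*}
d^2(y'_i)=d^2(\phi^*(y_i))=\phi^*(\bar d^2(y_i))=\phi^*(v_i)=0 .
\end{equation*}
For $x'_2\in E_2^{0,2}$ again only $d^2$ can be non-zero (and $x'_2$ cannot be a boundary), so by Theorem~\ref{theorem:SpPathDiff}
\begin{equation*}
d^2(x'_2)=\phi^*\bigl(\bar d^2(x_2)\bigr)=\phi^*(y_1v_1+y_2v_2+2y_1u_1+2y_2u_2)=2y'_1\gamma_1+2y'_2\gamma_2 .
\end{equation*}

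Next I would treat $x'_6$. By Theorem~\ref{theorem:SpPathDiff} the only non-trivial differentials in $\{\bar E_r,\bar d^r\}$ are supported on $x_2$ and $x_6$, so $\bar d^r(x_6)=0$ for $2\le r\le 5$; naturality then forces $d^r(x'_6)=0$ for $2\le r\le 5$, so $x'_6$ survives to $E_6$ with $\phi^*(x_6)=x'_6$ there and $d^6(x'_6)=\phi^*(\bar d^6(x_6))$. Using $v_1=\alpha_1-\beta_1$, $u_1=\beta_1$ and the identity $(\alpha_1-\beta_1)(\alpha_1^3+\alpha_1^2\beta_1+\alpha_1\beta_1^2+\beta_1^3)=\alpha_1^4-\beta_1^4$, I would rewrite the value of Theorem~\ref{theorem:SpPathDiff} as
\begin{equation*}
\bar d^6(x_6)=y_1(v_1^3+4v_1^2u_1+6v_1u_1^2+4u_1^3)=y_1(\alpha_1^3+\alpha_1^2\beta_1+\alpha_1\beta_1^2+\beta_1^3),
\end{equation*}
and then apply $\phi^*$: substituting $v_1\mapsto 0$ and $u_1\mapsto\gamma_1$ leaves $y'_1(\gamma_1^3+\gamma_1^3+\gamma_1^3+\gamma_1^3)=4y'_1\gamma_1^3$ in $E_6^{6,1}$. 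Reducing this representative modulo the images of the differentials landing in $E^{6,1}$ — the $d^2$-image being the $H^4(Sp(2)/T^2)$-multiples of $2y'_1\gamma_1+2y'_2\gamma_2$, and the $d^4$-image being $2$-torsion — together with the relations $\sigma_1^2=\sigma_2^2=0$ of $H^*(Sp(2)/T^2;\mathbb Z)$ (so $\gamma_1^4=0$), should give $d^6(x'_6)=y'_1\gamma_1^3$ up to class representative and sign. Finally $x'_6$ does not survive past $E_6$ and no remaining generator carries a differential, completing the statement.

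The hard part will be precisely that last reduction: pinning down the correct class representative of $d^6(x'_6)$ inside the subquotient $E_6^{6,1}$, i.e.\ matching the explicit element $\phi^*(\bar d^6(x_6))$ to the compact form $y'_1\gamma_1^3$. This forces an honest description of $E_6^{6,1}\cong H^6(Sp(2)/T^2)\otimes\langle y'_1,y'_2\rangle$ modulo the images of $d^2$ and $d^4$, keeping careful track of the $2$-torsion those differentials introduce. By contrast, the vanishing of the remaining differentials, the form of $d^2(x'_2)$, and the survival of $x'_6$ to $E_6$ are all formal consequences of Theorem~\ref{theorem:SpPathDiff} together with the dimension counts already available from the Poincar\'e series of $H^*(\Omega(Sp(2)/T^2);\mathbb Z)$ and $H^*(Sp(2)/T^2;\mathbb Z)$.
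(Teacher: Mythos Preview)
Your approach is exactly the paper's: push the differentials from Theorem~\ref{theorem:SpPathDiff} through $\phi^*$ using diagram~(\ref{fig:phicd}), which kills every $v_i$ and sends $u_i\mapsto\gamma_i$, $y_i\mapsto y'_i$. The paper's proof is in fact terser than yours---it simply says to keep the $v_i$-free summands of $\bar d^2(x_2)$ and $\bar d^6(x_6)$ and relabel.

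Your worry about the ``hard part'' is well-placed but over-engineered. The substitution genuinely yields $4y'_1\gamma_1^3$, just as you compute, and the paper does \emph{not} reduce this to $y'_1\gamma_1^3$ either; indeed in the proof of Theorem~\ref{theorem:FreeLoopSp(2)/T} the paper works with the representative $4[y_1\gamma_1^3]$ (see equation~(\ref{eq:x6Non-trival})). So the discrepancy with the stated form $y'_1\gamma_1^3$ is absorbed into the phrase ``up to class representative and sign'' rather than explained. Your speculation about a nontrivial $d^4$-image is off: since the only generators carrying differentials are $x'_2$ (with $d^2$) and $x'_6$ (with $d^6$), one has $d^3=d^4=d^5=0$ and $E_3^{6,1}=E_6^{6,1}$; there is no further quotient to take before $d^6$ fires.
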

    		
    		\begin{proof}
    			For the same reasons as in the proof of Theorem~\ref{theorem:allDiff}, we have for $i=1,2$
    			\begin{equation*}
    				\phi^*(y_i)=y'_i,\;\;\phi^*(x_i)
    				=x'_i\;\;\text{and}\;\;\phi^*(\alpha_i)
    				=\gamma_i
    				=\phi^*(\beta_i)
    				=\phi^*(u_i),\;\;\text{so}\;\;\phi^*(v_i)=0. 
    			\end{equation*}
    			Hence by the same arguments as in the proof of Theorem~\ref{theorem:allDiff}, we have 
    			\begin{equation*}
    				d^r(y'_i)=0 \text{ and } d^r(\gamma_i)=0
    			\end{equation*}
    			and the image of $d^r$ on generators $x'_2,x'_6$
    			is determined by those summands in the image of $\bar{d}^2$ on $x_2,x_6$ given in Theorem~\ref{theorem:SpPathDiff} containing no $v_i$, replacing $u_i$ with $\gamma_i$ and $y_i$ with $y'_i$.
    			This proves the statement.
    		\end{proof}

			\subsection{Case $G_2/T^2$}\label{sec:DiffG2}
			
    			To obtain the differentials in the Leray-Serre spectral sequence of the evaluation fibration of $G_2/T^2$, as in previous flag manifolds, we consider the map of Leray-Serre spectral sequences induced by (\ref{fig:phicd}).
    				In the following argument using (\ref{eq:H*G2/T}) and Theorem~\ref{prop:LoopG2} we have
    				\begin{equation*}
    					H^*(G_2/T^2;\mathbb{Z})= \frac{\mathbb{Z}[\gamma_1,\gamma_2,t_3]}{\langle\sigma_1^\gamma,\sigma_2^\gamma,2t_3-\sigma_3^\gamma,t_3^2\rangle}
    				\end{equation*}
    				and
    				\begin{equation*}
    					H^*(\Omega(G_2/T^2);\mathbb{Z})	=\frac{\mathbb{Z}[(a'_2)_1,(a'_2)_2,\dots]}{\langle{a'}_2^{m}-(m!/2^{\lfloor\frac{m}{2}\rfloor})(a'_2)_{m}\rangle}
    					\otimes \Gamma_{\mathbb{Z}}[b'_{10}] 
    					\otimes \Lambda_{\mathbb{Z}}(y'_1,y'_2)
    				\end{equation*}
    				where $=|\gamma_1|=2=|\gamma_2|$, $|y_1|=1=|y_2|$, $|a'_2|=2$, $|b'_{10}|=10$, $\sigma_i$ are elementary symmetric polynomials of degree $i$ in bases $\gamma_1\gamma_2,\gamma_3$ with $-\gamma_3=\gamma_1+\gamma_2$.
			
			        \begin{theorem}\label{theorem:DiffG2}
					The cohomology Leray-Serre spectral sequence $\{E_r,d^r \}$ associated to the evaluation fibration of $G_2/T^2$
					has, up to class representative, the only non-trivial differentials 
					\begin{equation*}
						d^2(a'_2)
						=y'_1(2\gamma_1+\gamma_2)
						+y'_2(\gamma_1+2\gamma_2)
					\end{equation*}
					and
					\begin{equation*}
						d^{10}(b'_{10})=3y'_1t_3\gamma_1^2.
					\end{equation*}
				\end{theorem}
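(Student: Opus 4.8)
The plan is to exploit exactly the machinery already established for $SU(3)/T^2$ and $Sp(2)/T^2$, namely the map of spectral sequences $\phi^*\colon\{\bar E_r,\bar d^r\}\to\{E_r,d^r\}$ coming from the map of fibrations \eqref{fig:fibcd}, together with Theorem~\ref{theorem:G2PathDiff} which records the differentials in the diagonal-map spectral sequence for $G=G_2$. First I would fix generators so that, as in the proofs of Theorems~\ref{theorem:allDiff} and~\ref{theorem:SpDiff}, one has for $i=1,2$
\[
\phi^*(y_i)=y'_i,\quad \phi^*(a_2)=a'_2,\quad \phi^*(b_{10})=b'_{10},\quad \phi^*(\alpha_i)=\phi^*(\beta_i)=\phi^*(u_i)=\gamma_i,
\]
and in addition $\phi^*(l_3)=\phi^*(s_3)=t_3$, hence $\phi^*(v_i)=0$ and $\phi^*(\theta)=l_3-s_3\mapsto 0$, while $\phi^*(\psi)=\phi^*(l_3)=t_3$. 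The key point is that $\phi^*$ kills precisely the "difference" classes $v_i$ and $\theta$, so applying $\phi^*$ to a $\bar d^r$-image amounts to deleting every summand containing a $v_i$ or a $\theta$, then replacing $u_i\mapsto\gamma_i$, $\psi\mapsto t_3$, $y_i\mapsto y'_i$.

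Next I would run through the generators of the $E_2$-page. Differentials on $\gamma_1,\gamma_2$ vanish for dimensional reasons, and $d^2(y'_i)=\phi^*(\bar d^2(y_i))=\phi^*(v_i)=0$, so $y'_i$ is a permanent cycle. For $a'_2$: by Theorem~\ref{theorem:G2PathDiff}, $\bar d^2(a_2)=y_1(u_2+v_2+2u_1)+y_2(u_1+v_1+2u_2)$, and applying $\phi^*$ deletes the $v_i$ terms, giving
\[
d^2(a'_2)=y'_1(2\gamma_1+\gamma_2)+y'_2(\gamma_1+2\gamma_2),
\]
exactly as claimed; moreover since $\phi^*(\zeta)=d^2(a'_2)\neq 0$, the class $\phi^*(\bar d^4(a_2\zeta))=\phi^*(\theta)=0$, which is consistent with the fact that in $\{E_r,d^r\}$ the element $a'_2\zeta'$ (where $\zeta'=\phi^*(\zeta)$) is already a $d^2$-boundary, so no $d^4$ is forced there. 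Then for $b'_{10}$: Theorem~\ref{theorem:G2PathDiff} gives $\bar d^{10}(b_{10})=y_1(\theta v_1^2+3\theta v_1u_1+3\theta u_1^2+2\psi v_1^2+3\psi v_1u_1+3\psi u_1^2)$; applying $\phi^*$ annihilates every summand containing a $v_1$ or a $\theta$, leaving only $y_1\cdot 3\psi u_1^2\mapsto 3y'_1 t_3\gamma_1^2$, hence $d^{10}(b'_{10})=3y'_1t_3\gamma_1^2$.

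The main obstacle is to justify that these are the \emph{only} non-trivial differentials and that $d^r(b'_{10})=0$ for $3\le r\le 9$ — i.e. that the naive "image under $\phi^*$" computation genuinely captures all of $\{E_r,d^r\}$. Here I would argue, as in the $Sp(2)$ and $SU(3)$ cases, that $b'_{10}$ cannot support a shorter differential for dimensional/torsion reasons: the target groups $E_r^{10,10-r+1}$ for $r<10$ either vanish or are generated by classes that are already boundaries or that must survive (because $u_1,u_2$, equivalently $\gamma_1,\gamma_2$, are permanent cycles and generate the needed part of $H^*(G_2/T^2)$). One must also check that $b'_{10}$ must die by the $E_\infty$-page — otherwise $H^*(\Omega(G_2/T^2))$ would contribute an extra class not present in $H^*(G_2/T^2)$ — and that no extension or torsion obstruction (recall $t_3^2=0$ and $2t_3=\sigma_3^\gamma$, and that $[a'_2\zeta']$, $[t_3]$ type classes carry the $2$-torsion) spoils the identification; this mirrors the torsion bookkeeping already carried out in the proof of Theorem~\ref{theorem:G2PathDiff} and transports along $\phi^*$. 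Once that is in place, the differentials on $a'_2$ and $b'_{10}$ displayed above are forced, completing the proof.
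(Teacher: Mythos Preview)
Your proposal is correct and follows the same approach as the paper: apply $\phi^*$ to the differentials of Theorem~\ref{theorem:G2PathDiff}, using that $\phi^*$ kills $v_i$ and $\theta$ while sending $u_i\mapsto\gamma_i$, $y_i\mapsto y'_i$, and $\psi\mapsto t_3$. Your final paragraph is unnecessary, however: since $b_{10}$ lies in column~$0$ (hence is never a boundary) and Theorem~\ref{theorem:G2PathDiff} already records $\bar d^r(b_{10})=0$ for $r\le 9$, naturality of $\phi^*$ as a map of spectral sequences immediately gives $d^r(b'_{10})=\phi^*(\bar d^r(b_{10}))=0$ for $r\le 9$, so no separate dimensional, torsion, or ``must die'' argument in $\{E_r,d^r\}$ is required.
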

			
			    \begin{proof}
			        We deduce the differentials in $\{E_r,d^r \}$ using the notation and results of Theorem~\ref{theorem:G2PathDiff}.
				    For the same reasons as in the proof of Theorem~\ref{theorem:allDiff}, we have
    			    \begin{equation*}
        				\phi^*(y_i)=y'_i,\;\;\phi^*(x_i)
        				=x'_i\;\;\text{and}\;\;\phi^*(\alpha_i)
        				=\gamma_i
        				=\phi^*(\beta_i)
        				=\phi^*(u_i),\;\;\text{so}\;\;\phi^*(v_i)=0.
    			    \end{equation*}
    			    Recall that $\theta=\bar{d}^4(a_2(y_1(u_2+v_2+2u_1)+y_2(u_1+v_1+2u_2)))$ and $\psi=l_3$.
    		Then
    			    \begin{equation*}
    			        \phi^*(\theta)=0 \text{ and } \phi^*(\psi)=t_3.
    			    \end{equation*}
        			Hence by the same arguments used in the proof of Theorem~\ref{theorem:allDiff}, we have 
        			\begin{equation*}
        				d^r(\theta)=0,
        				d^r(y'_i)=0 
        				\text{ and }
        				d^r(\gamma_i)=0.
        			\end{equation*}
				    Using the results of Theorem~\ref{theorem:G2PathDiff},
				    we  deduce the differentials in $\{E_r,d^r \}$.
				    Recall that $\zeta=y_1(u_2+v_2+2u_1)+y_2(u_1+v_1+2u_2)$. Since $\bar d^4$ is non-trivial only on $a_2\zeta$, and $\phi^*\bar d^4(a_2\zeta)=\phi^*(\theta)=0$, the differential $d^4$ is trivial.

				    The image of $d^r$ on generators $a'_2,b'_{10}$
        			is determined by those summands in the image of $\bar{d}^2$ on $a_2,b_{10}$ given in Theorem~\ref{theorem:G2PathDiff} containing no $v_i$ or $\theta$, and replacing $u_i$ with $\gamma_i$, $y_i$ with $y'_i$ and $\phi$ with $t_3$.
        			This gives the result stated in the theorem.
				\end{proof}

	\section{Free loop cohomology of complete flag manifolds of simple Lie groups of rank~$2$}
	
		In this section we calculate the cohomology of the free loop space of all complete flag manifolds arising form simple Lie groups of rank $2$.

		\subsection{Free loop cohomology of $SU(3)/T^2$}
		
			\begin{theorem}\label{theorem:L(SU(3)/T^2)}
				The integral algebra structure of the $E_\infty$-page of the Leray-Serre spectral sequence associated to the evaluation fibration of $SU(3)/T^2$
				is $A/I$, where
				\begin{align*}
					A=\Lambda_\mathbb{Z}( & \gamma_i,\; (x_4)_m,\; y_i,\; (x_2)_my_1y_2,\; (x_2)_m(y_1(\gamma_1+\gamma_2)-y_2\gamma_2), \; (x_2)_m(y_2(\gamma_1+\gamma_2)-y_1\gamma_1), \\ &
					(x_2)_m(2y_1\gamma_1^2+y_2\gamma_1^2),\;
                    (x_2)_m\gamma_1^2\gamma_2, \;
                    (x_2)_m\gamma_1^3, \;
                    (x_2)_m(\gamma_1^2+\gamma_2^2+\gamma_1\gamma_2)
                    )
				\end{align*}
				and
				\begin{align*}
					I= \langle &((x_2)_1^m-m!(x_2)_m)j, \; (x_4)_1^m-m!(x_4)_m, \;
					\\ &(x_2)_a(\gamma_1^2+\gamma_2^2+\gamma_1\gamma_2),\;(x_2)_a\gamma_1^3,\; (x_2)_a(y_2(\gamma_1+2\gamma_2)+y_1(2\gamma_1+\gamma_2)) \rangle
				\end{align*}
				where $m\geq 1$, $a\geq 0$,
				$|\gamma_i|=2$, $|y_i|=1$, $|(x_2)_k|=2k$, $|(x_4)_k|=4k$ and
				\begin{align*}
				    j\in\{ y_1y_2,\; y_1(\gamma_1+\gamma_2)-y_2\gamma_2,\; y_2(\gamma_1+\gamma_2)-y_1\gamma_1,\; 2y_1\gamma_1^2+y_2\gamma_1^2,\; \gamma_1^2+\gamma_2^2+\gamma_1\gamma_2,\; \gamma_1^2\gamma_2,\; \gamma_1^3 \}
				\end{align*}
				for $1\leq i \leq 2$ and $1\leq k$.
			\end{theorem}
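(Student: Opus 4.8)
The plan is to run the cohomology Leray--Serre spectral sequence $\{E_r,d^r\}$ of the evaluation fibration $\Omega(SU(3)/T^2)\to\Lambda(SU(3)/T^2)\xrightarrow{ev}SU(3)/T^2$ from the $E_2$-page
\[
E_2\cong H^*(SU(3)/T^2;\mathbb{Z})\otimes\Gamma_{\mathbb{Z}}(x'_2,x'_4)\otimes\Lambda_{\mathbb{Z}}(y'_1,y'_2)
\]
of Section~\ref{sec:fdiffSU} to its $E_\infty$-page. By Theorem~\ref{theorem:allDiff} the only differentials that are non-zero on a set of algebra generators of $E_2$ are $d^2(x'_2)$ and $d^4(x'_4)$; since each $d^r$ is a derivation these determine the full differential, and a degree/multiplicativity argument (as in \cite{cohololgy_Lprojective}) shows that $d^3$ and all $d^r$ with $r\geq5$ vanish, so $E_\infty=E_5$. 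The task thus reduces to computing $E_3=H(E_2,d^2)$ and then $E_5=H(E_4,d^4)$, keeping the bigraded algebra structure throughout.

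For the first step put $\omega:=d^2(x'_2)=y'_1(2\gamma_1+\gamma_2)+y'_2(\gamma_1+2\gamma_2)$, which is exactly the Jacobian covector $\sum_i(\partial\sigma_2/\partial\gamma_i)\,y'_i$ of the defining relation $\sigma_2$; in particular $\omega^2=0$ because $(y'_i)^2=0$. Since $d^2$ annihilates $H^*(SU(3)/T^2;\mathbb{Z})$, $x'_4$ and the $y'_i$, the divided-power identity $(x'_2)^m=m!\,(x'_2)_m$ together with the Leibniz rule gives $d^2((x'_2)_m)=(x'_2)_{m-1}\,\omega$, so on the divided-power tower of $x'_2$ the differential is multiplication by $\omega$. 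As $\Gamma_{\mathbb{Z}}(x'_4)$ is $\mathbb{Z}$-free and a $d^2$-tensor factor, $E_3\cong H\bigl(\Gamma_{\mathbb{Z}}(x'_2)\otimes M,\,d^2\bigr)\otimes\Gamma_{\mathbb{Z}}(x'_4)$ with $M:=H^*(SU(3)/T^2;\mathbb{Z})\otimes\Lambda_{\mathbb{Z}}(y'_1,y'_2)$, and the bracketed homology of this two-periodic complex is $M/\omega M$ in $(x'_2)_0$-degree and $\ker(\omega\,\cdot)/\omega M$ in each $(x'_2)_m$-degree with $m\geq1$. I would evaluate these modules degree by degree from the explicit $\mathbb{Z}$-basis of $H^*(SU(3)/T^2;\mathbb{Z})$ (ranks $1,2,2,1$ in degrees $0,2,4,6$, with $\gamma_1^2+\gamma_2^2+\gamma_1\gamma_2=0$ and $\gamma_1^2\gamma_2+\gamma_1\gamma_2^2=0$), each multiplication-by-$\omega$ map being a small explicit integer matrix; the surviving classes at the $(x'_2)_m$-level are the products $(x'_2)_m z$ with $z$ ranging over a basis of $\ker(\omega\,\cdot)/\omega M$, which is what produces the list of generators $(x_2)_m y_1y_2,\ (x_2)_m(y_1(\gamma_1+\gamma_2)-y_2\gamma_2),\dots$ appearing in $A$.

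The second step is the same one page later: on $E_4=E_3$ the differential is multiplication by $\omega':=d^4(x'_4)=y'_1(\gamma_2^2+2\gamma_1\gamma_2)+y'_2(\gamma_1^2+2\gamma_1\gamma_2)=\sum_i(\partial\sigma_3/\partial\gamma_i)\,y'_i$ on the divided-power tower of $x'_4$, now with coefficients in the ring produced in the first step; again $(\omega')^2=0$ and $E_5=E_\infty$ is read off as the corresponding cokernel and kernels. Taken together, these two passages are the integral analogue of the Koszul-type computation of the Hochschild homology of the complete intersection $H^*(SU(3)/T^2;\mathbb{Z})=\mathbb{Z}[\gamma_1,\gamma_2]/(\sigma_2,\sigma_3)$, with the $y'_i$ playing the role of $d\gamma_i$ and $\omega,\omega'$ the covectors of the two relations; this is what makes the closed form of the answer plausible. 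Finally I would repackage the bigraded group $E_\infty$ into the presentation $A/I$: the permanent cycles are $\gamma_i$, $y'_i$, $(x'_4)_m$ and the surviving composites $(x'_2)_m z$, $(x'_4)_m w$ listed in $A$, while $I$ records (i) the divided-power identities $(x'_2)^m=m!(x'_2)_m$ and $(x'_4)^m=m!(x'_4)_m$, which are meaningful in $E_\infty$ only after multiplication by a factor $j$ making the relevant class a cycle, whence the generators $((x_2)_1^m-m!(x_2)_m)j$; (ii) the defining relations of $H^*(SU(3)/T^2;\mathbb{Z})$; and (iii) the image of $d^2$, generated by the classes $(x'_2)_a\,\omega=(x'_2)_a\bigl(y_2(\gamma_1+2\gamma_2)+y_1(2\gamma_1+\gamma_2)\bigr)$.

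The hard part will be the final repackaging together with the integral kernel/cokernel bookkeeping: one must pin down exactly which composites $(x'_2)_m z$ and $(x'_4)_m w$ survive, with which integer coefficients, and then verify that the multiplicative relations among these generators over $\mathbb{Z}$ are precisely those generating $I$ --- in particular that no spurious $2$-torsion arises and that the interaction of the divided-power structure with the ring structure of $H^*(SU(3)/T^2;\mathbb{Z})$ is exactly as encoded by the relations $((x_2)_1^m-m!(x_2)_m)j$. The feature that keeps this tractable is precisely that $\omega$ and $\omega'$ are the Jacobian covectors of $\sigma_2$ and $\sigma_3$, so both homology steps are finite Koszul-complex computations that can be carried out by hand in the four non-zero degrees of $H^*(SU(3)/T^2;\mathbb{Z})$.
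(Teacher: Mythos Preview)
Your overall strategy is sound and would reach the stated $A/I$, but you are missing one simplification that the paper exploits and that explains the shape of the answer. The class $\omega'=d^4(x'_4)$ already lies in the image of $d^2$: using the relation $\gamma_1^2+\gamma_1\gamma_2+\gamma_2^2=0$ one checks
\[
d^2\bigl(x'_2(\gamma_1+\gamma_2)\bigr)=y'_1(2\gamma_1^2+3\gamma_1\gamma_2+\gamma_2^2)+y'_2(\gamma_1^2+3\gamma_1\gamma_2+2\gamma_2^2)=\omega',
\]
so $\omega'=0$ in $E_3$ and $d^4$ is identically zero. Thus $E_\infty=E_3$, not merely $E_5$; this is why $(x_4)_m$ appears naked in $A$ with only the divided--power relation in $I$, whereas every occurrence of $(x_2)_m$ carries an extra factor. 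In your plan this would emerge only after you set up the second ``multiplication by $\omega'$'' step and discover it is the zero map, so it is worth recording the identity $\omega'=\omega(\gamma_1+\gamma_2)$ at the outset.

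Apart from that, your route differs from the paper mainly in technique. You organise the $d^2$--homology as the two--periodic complex $(\Gamma_{\mathbb{Z}}(x'_2)\otimes M,\ \cdot\,\omega)$ and propose to read off $\ker(\omega)/\omega M$ degree by degree, framing $\omega,\omega'$ as the Jacobian covectors of $\sigma_2,\sigma_3$; this is a clean conceptual picture and connects the computation to a Koszul/Hochschild model. The paper instead computes the same kernels by passing to polynomial lifts $\phi,\psi$ of $d^2$ and evaluating the ideal intersections
\[
\langle\text{im}\,d^2\rangle\cap\langle\sigma_2,\sigma_3\rangle
\]
via Gr\"{o}bner bases over $\mathbb{Z}$, which mechanises the integral bookkeeping (coefficients, $3$--torsion) that you flag as the ``hard part''. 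Either method works; the Gr\"{o}bner computation is what pins down the specific generators $(x_2)_m(2y_1\gamma_1^2+y_2\gamma_1^2)$, $(x_2)_m\gamma_1^2\gamma_2$, etc., and confirms that no further generators are needed.
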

			
			\begin{proof}
				We consider the cohomology Leray-Serre spectral sequence $\{ E_r,d^r \}$ associated to the evaluation fibration of $SU(3)/T^2$ studied in Section \ref{sec:diff}, that is,
				\begin{equation*}
					\Omega(SU(3)/T^2)\to\Lambda(SU(3)/T^2)\to SU(3)/T^2.
				\end{equation*}
			     By (\ref{eq:H*SU/T}), the integral cohomology of the base space $SU(3)/T^2$ is given by
				\begin{equation*}
				    \frac{\mathbb{Z}[\gamma_1,\gamma_2]}{\langle\gamma_1^2+\gamma_1\gamma_2+\gamma_2^2,\gamma_1^3\rangle}
				\end{equation*}
				where $|\gamma_1|=|\gamma_2|=2$.
				The integral cohomology of the fibre $\Omega(SU(3)/T^2)$ is given by
				\begin{equation*}
				    \Lambda_\mathbb{Z}(y_1,y_2)\otimes\Gamma_\mathbb{Z}[x_2,x_4]
				\end{equation*}
				where $|y_1|=|y_2|=1$, $|x_2|=2$ and $|x_4|=4$.
				Additive generators on the $E_2$-page of the spectral sequence are given by representative elements of the form
				\begin{equation*}
					(x_2)_a(x_4)_bP, \;\; (x_2)_a(x_4)_by_iP, \;\; (x_2)_a(x_4)_by_1y_2P
				\end{equation*}
				where $0\leq a,b$, $1\leq i \leq 2$ and $P\in \mathbb{Z}[\gamma_1,\gamma_2]$ is a monomial of degree at most $3$.
				By Theorem~\ref{theorem:allDiff}, the only non-zero differentials are $d^2$ and $d^4$,
				which are non-zero only on generators $x_2$ and $x_4$, respectively.
				The differentials up to sign are given by
				\begin{equation*}\label{eq:DifferentialsSU}
		        	d^2(x_2)=y_1(2\gamma_1+\gamma_2)+y_2(\gamma_1+2\gamma_2), \;\;\;
					d^4(x_4)=[y_1(\gamma_1^2+2\gamma_1\gamma_2)+y_2(\gamma_2^2+2\gamma_1\gamma_2)].
				\end{equation*}
				As
				\begin{align*}
					&d^2(x_2(\gamma_1+\gamma_2)) \\
					&=y_1(2\gamma_1^2+3\gamma_1\gamma_2+\gamma_2^2)+y_2(\gamma_1^2+3\gamma_1\gamma_2+2\gamma_2^2) \\
					&=y_1(\gamma_1^2+2\gamma_1\gamma_2)+y_2(\gamma_2^2+2\gamma_1\gamma_2) \\
					&=d^4(x_4)
				\end{align*}
				where the second equality is given by subtracting elements of the symmetric ideal $y_i(\gamma_1^2+\gamma_2^2+\gamma_1\gamma_2)$ for $i=1,2$ 
				from $y_1(2\gamma_1^2+3\gamma_1\gamma_2+\gamma_2^2)+y_2(\gamma_1^2+3\gamma_1\gamma_2+2\gamma_2^2)$, the differential $d^4$ is trivial, and the spectral sequence converges at the third page.
				
				The monomial generators $\gamma_i$, $x_4$, $y_i$ and $(x_2)_my_1y_2$ occur in $E_2^{*,0}$ or $E_2^{0,*}$ and are always in the kernel of the differentials,
				so are algebra generators of the $E_\infty$-page.
				All relations on the $E^3$-page coming from the relations in
				        $H^*(\Omega(SU(3)/T^2);\mathbb{Z}): 
                            (x_2)_1^m-m!(x_2)_m$ and $ 
                            (x_4)_1^m-m!(x_4)_m$, 
                       the relations in  $H^*(SU(3)/T^2;\mathbb{Z}):
                           \gamma_1^2+\gamma_1\gamma_2+\gamma_2^2$ and 
                            $\gamma_1^3$,
                        or are in the image of $d^2: y_1(\gamma_1+2\gamma_2)+y_2(2\gamma_1+\gamma_2)$
				hold on the $E_\infty$-page and therefore are in $I$ if they are in $A$.
				For this reason, since $E^2$ generator $(x_2)_m$ will not be in $A$, all generators of $I$ must be considered up to multiple of $(x_2)_a$.
			    In addition, we add $((x_2)_1^m-m!(x_2)_m)j$
				for $j$ such that $(x_2)_mj$ is a generator,
				rather than $(x_2)_1^m-m!(x_2)_m$ to $I$. 
		        It remains to determine all generators of $A$.
				
				The elements on the $E_2$-page of the form $(x_2)_a(x_4)_by_1y_2P$, $(x_4)_by_iP$ and $(x_4)_bP$ are in the kernel of $d^2$ and generated by $y_1y_2(x_2)_m, (x_4)_m, \gamma_1$ and $\gamma_2$.
				
				Let $\phi\colon\mathbb{Z}[(x_4)_m,(x_2)_m,\gamma_1,\gamma_2] \to \mathbb{Z}[(x_4)_m,(x_2)_m,y_1,y_2,\gamma_1,\gamma_2]$ be the map defined by $d^2$ so that the following digram commutes
				\begin{equation*}
				    \xymatrix{
							{\ker(\phi)} \ar[r]^(.5){} \ar[d]^(.45){q} & {\mathbb{Z}[(x_4)_m,(x_2)_m,\gamma_1,\gamma_2]} \ar[r]^(0.45){\phi} \ar[d]^(.45){q}  & {\mathbb{Z}[(x_4)_m,(x_2)_m,y_1,y_2,\gamma_1,\gamma_2]} \ar[d]^(.45){q} \\
							{\ker(d^2)} \ar[r]^(.45){} & {E_2} \ar[r]^-(.5){d^2} & {E_2}}
				\end{equation*}
				where $q$ is the quotient map by the symmetric polynomials and the divided polynomial relations
				\begin{equation*}
				    \gamma_1^2+\gamma_1\gamma_2+\gamma_2^2,\;  \gamma_1^3,\; (x_2)_1^m-m!(x_2)_m \text{ and } (x_4)_1^m-m!(x_4)_m.
				\end{equation*}
				By direct calculation, it can be seen that the kernel of $\phi$ is generated by $\gamma_1$,  $\gamma_2$ and $(x_4)_m$ which are also in the kernel of $d^2$.
				Thus the remaining elements of the kennel of $d^2$ of the form $(x_2)_m(x_4)_bP$ are elements of the ideal
				\begin{equation}\label{eq:IdealIntyi}
				    q\phi^{-1}(\langle y_2(2\gamma_2+\gamma_1)+y_1(\gamma_2+2\gamma_1)\rangle \cap\langle\gamma_2^2+\gamma_2\gamma_1+\gamma_1^2,\; \gamma_1^3 \rangle)
				\end{equation}
				where $q\phi^{-1}(y_1(2\gamma_2+\gamma_1)+y_1(\gamma_2+2\gamma_1))$ spans $\text{Im}(\phi)$.
				Intersection (\ref{eq:IdealIntyi}) can be computed by Gr\"{o}bner basis to show it contains no generators with $\gamma_i$ term lower than degree $4$.
				Hence the generators of the of kernel are of the form $(x_2)_m(x_4)_bP$ where the degree of $P$ is $3$.
				Since it is not yet contained in the set of generators, we add relation
				$(x_2)_m(\gamma_1^2+\gamma_2^2+\gamma_1\gamma_2)$
				as a generator in order to minimize the number of algebra generators required.				
				
				Let $\psi\colon\mathbb{Z}[(x_4)_m,(x_2)_my_1, (x_2)_my_2,\gamma_1,\gamma_2] \to \mathbb{Z}[(x_4)_m,(x_2)_m,y_1,y_2,\gamma_1,\gamma_2]$ 
				be the map defined by $d^2$ so that the following digram commutes
				\begin{equation*}
				    \xymatrix{
							{\ker(\psi)} \ar[r]^(.5){} \ar[d]^(.45){q} & {\mathbb{Z}[(x_4)_m,(x_2)_my_1, (x_2)_my_2,\gamma_1,\gamma_2]} \ar[r]^(0.525){\psi} \ar[d]^(.45){q}  & {\mathbb{Z}[(x_4)_m,(x_2)_m,y_1,y_2,\gamma_1,\gamma_2]} \ar[d]^(.45){q} \\
							{\ker(d^2)} \ar[r]^(.45){} & {E_2} \ar[r]^-(.5){d^2} & {E_2}}
				\end{equation*}
				where $q$ is the quotient map by relations
				\begin{equation*}
				    \gamma_1^2+\gamma_1\gamma_2+\gamma_2^2,\; \gamma_1^3,\; (x_2)_1^m-m!(x_2)_m \text{ and } (x_4)_1^m-m!(x_4)_m.
				\end{equation*}
				By direct calculation, it can be seen that the kernel of $\psi$ is generated by
				$\gamma_1$, $\gamma_2$, $(x_4)_m$ and the image of $d^2$,
			 $(x_2)_m(y_2(2\gamma_2+\gamma_1)+y_1(\gamma_2+2\gamma_1))$ 
				
				Thus the remaining element of the kernel of $d^2$ of the form $(x_2)_m(x_4)_by_iP$ are elements of the ideal
				\begin{equation}\label{eq:IdealInt}
				    q\psi^{-1}(\langle y_1y_2(2\gamma_2+\gamma_1),\; y_1y_2(\gamma_2+2\gamma_1)\rangle \cap \langle \gamma_2^2+\gamma_2\gamma_1+\gamma_1^2,\; \gamma_1^3 \rangle)
				\end{equation}
				where $q\psi^{-1}(y_1y_2(2\gamma_2+\gamma_1),\;y_1y_2(\gamma_2+2\gamma_1))$ spans $\text{Im}(\psi)$.
				
				Intersection (\ref{eq:IdealInt}) can be computed by Gr\"{o}bner basis with respect to the lexicographic monomial ordering $y_2>y_1>\gamma_2>\gamma_1>(x_4)_m>(x_2)_m$ yielding the ideal
				\begin{equation}\label{eq:Idealyi}
				  \langle y_1y_2(\gamma_2^2+\gamma_2\gamma_1+\gamma_1^2),\; 3y_1y_2\gamma_1^3,\; y_1y_2(\gamma_2\gamma_1^3+2\gamma_1^4) \rangle.
				\end{equation}
				Generators of the $q\psi^{-1}$ image of (\ref{eq:Idealyi}) will up to multiple of $(x_2)_m$, generate the kernel of $d^2$ with elements of the form $(x_2)_m(x_4)_by_iP$.
				The $q\psi^{-1}$ image of generator $y_1y_2(\gamma_2^2+\gamma_2\gamma_1+\gamma_1^2)$ of (\ref{eq:Idealyi}) is $[y_2(\gamma_1+\gamma_2)+y_1\gamma_1]$. 
				To write the image of $d^2$ in terms of generators we add
				\begin{equation*}
				    (x_2)_m(y_2(\gamma_1+\gamma_2)+y_1\gamma_1) \text{ and } (x_2)_m(y_1(\gamma_1+\gamma_2)+y_2\gamma_2)
				\end{equation*}
				as generators of the algebra.
				The $q\psi^{-1}$ image of generator $3y_1y_2\gamma_1^3$ of (\ref{eq:Idealyi}) is $[2y_1\gamma_1^2+y_2\gamma_1^2]$.
				Hence we take 
				\begin{equation*}
				    (x_2)_m(2y_1\gamma_1^2+y_2\gamma_1^2)
				\end{equation*}
				as generators of the algebra.
				Finally the $q\psi^{-1}$ image of generator $y_1y_2(\gamma_2\gamma_1^3+2\gamma_1^4)$ of (\ref{eq:Idealyi}) is trivial.
			\end{proof}
			
			\begin{theorem}\label{corollary:L(SU(3)/T^2)}
			    The cohomology $H^*(\Lambda(SU(3)/T^2);\mathbb{Z})$ is isomorphic as a module to $A/I$
			    given in Theorem~\ref{theorem:L(SU(3)/T^2)}. In addition there is no multiplicative extension problem on
			    the sub-algebra generated by $\gamma_a,\gamma_2$, the sub-algebra generated by $y_1,y_2,(x_4)_m$
			    and no multiplicative extension on elements $y_i\gamma_j$ for $1\leq i,j\leq 2$
			\end{theorem}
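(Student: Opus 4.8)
The plan is to deduce the additive structure from the collapse of the spectral sequence $\{E_r,d^r\}$ of Section~\ref{sec:diff} and then to verify the multiplicative assertions on representatives.

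By Theorem~\ref{theorem:allDiff} the only differentials are $d^2$ and $d^4$, and $d^4$ was seen to vanish in the proof of Theorem~\ref{theorem:L(SU(3)/T^2)}; thus $\{E_r,d^r\}$ collapses at $E_3=E_\infty=A/I$, and by convergence $H^*(\Lambda(SU(3)/T^2);\mathbb{Z})$ carries a filtration $F^\bullet$ with associated graded $A/I$, so it remains to split it additively. The inclusion of constant loops is a section of the evaluation fibration, so $H^*(SU(3)/T^2;\mathbb{Z})$ maps to $H^*(\Lambda(SU(3)/T^2);\mathbb{Z})$ by a split ring monomorphism, onto the bottom filtration row $E_\infty^{*,0}$; dually, restriction along the fibre inclusion $\Omega(SU(3)/T^2)\hookrightarrow\Lambda(SU(3)/T^2)$ realises the edge quotient $E_\infty^{0,*}$, a subgroup of the free abelian group $H^*(\Omega(SU(3)/T^2);\mathbb{Z})$. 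Denoting by $\sigma$ the section, one may choose lifts of the generators of $A$ so that $\sigma^*$ annihilates all of them; in particular $\sigma^*(y_i)\in H^1(SU(3)/T^2;\mathbb{Z})=0$, and since $y_i^2$ restricts to $0$ on the fibre it lies in $F^1H^2$, which equals the image of $H^2(SU(3)/T^2;\mathbb{Z})$ and on which $\sigma^*$ is injective; hence $y_i^2=0$ in $H^*(\Lambda(SU(3)/T^2);\mathbb{Z})$.

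Because $E_2$ is the tensor product of the torsion-free rings $H^*(SU(3)/T^2;\mathbb{Z})$ and $H^*(\Omega(SU(3)/T^2);\mathbb{Z})$, every torsion class of $E_\infty$ arises because the image of $d^2$ fails to be a direct summand of $\ker d^2$; by $d^2(x_2)=y_1(2\gamma_1+\gamma_2)+y_2(\gamma_1+2\gamma_2)$ this happens only in the rows generated over the base by $y_1,y_2$ and by $y_1y_2$, and the resulting torsion is $3$-primary because $(2\gamma_1+\gamma_2,\gamma_1+2\gamma_2)$ has index $3$ in $(\gamma_1,\gamma_2)$. In total degree at most $3$ there is no extension, and the relation
\[
2y_1\gamma_1+y_1\gamma_2+y_2\gamma_1+2y_2\gamma_2=0
\]
holds in $H^3(\Lambda(SU(3)/T^2);\mathbb{Z})$, being exactly the relation imposed on $E_\infty^{2,1}$ by $d^2(x_2)$; this is the assertion that there is no multiplicative extension on the $y_i\gamma_j$ in this range. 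Multiplying this relation by images of monomials in $\gamma_1,\gamma_2$ and by the permanent cycles $(x_4)_m$, and using the base relations $\gamma_1^3=0=\gamma_1^2\gamma_2+\gamma_1\gamma_2^2$ together with $y_i^2=0$, one produces, in each total degree where $E_\infty$ predicts torsion, enough relations among the products $p\,y_i$ and $p\,y_1y_2$ (here $p$ the image of a base class) to realise that torsion by a class of order exactly $3$. Using these explicit classes, the vanishing of $F^{p+1}H^n$ at the lowest nonzero filtration degree, and $\Ext^1(-,\text{free})=0$ for the free quotients above, one then checks degree by degree that every additive extension $0\to(\text{free})\to F^pH^n\to E_\infty^{p,n-p}\to 0$ splits, and iterating up the finite filtration gives $H^*(\Lambda(SU(3)/T^2);\mathbb{Z})\cong A/I$ as a graded abelian group.

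Finally, the subalgebra generated by $\gamma_1,\gamma_2$ is the image of the split ring monomorphism from $H^*(SU(3)/T^2;\mathbb{Z})$, hence a copy of it with no further relations; the subalgebra generated by $y_1,y_2,(x_4)_m$ consists of permanent cycles of filtration $0$, on which the edge ring map to $H^*(\Omega(SU(3)/T^2);\mathbb{Z})$ is injective with image the free summand $\Lambda_{\mathbb{Z}}(y_1,y_2)\otimes\Gamma_{\mathbb{Z}}[x_4]$ (using $y_i^2=0$ and that the divided power relations $(x_4)_1^m=m!(x_4)_m$ persist, as they do for the divided polynomial generators of based loop space cohomology), so no multiplicative extension arises there; and the statement for the $y_i\gamma_j$ is the displayed relation above together with its multiples by images of base classes and by the $(x_4)_m$. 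The principal obstacle is the step of the previous paragraph: showing that the $3$-torsion forced by $E_\infty$ in the rows through $y_i$ and $y_1y_2$ --- including those in which the non-surviving class $x_2$ appears --- is represented in $H^*(\Lambda(SU(3)/T^2);\mathbb{Z})$ by classes of order exactly $3$, and not absorbed into a free summand or made more highly divisible; the explicit product representatives furnished by the section and the single low-degree relation are precisely what close this gap.
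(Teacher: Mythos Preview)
Your treatment of the multiplicative assertions is essentially the paper's: the section of the evaluation fibration fixes the subalgebra on $\gamma_1,\gamma_2$, restriction to the fibre controls the subalgebra on $y_1,y_2,(x_4)_m$, and the degree-$3$ relation $2y_1\gamma_1+y_1\gamma_2+y_2\gamma_1+2y_2\gamma_2=0$ holds in $H^3(\Lambda(SU(3)/T^2);\mathbb{Z})$ because $F^3H^3=0$. These parts are fine.

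The gap is in the additive extension argument. Your method for exhibiting $3$-torsion in $H^*(\Lambda(SU(3)/T^2);\mathbb{Z})$ is to take the single relation above and multiply it by monomials in $\gamma_1,\gamma_2$ and by the permanent cycles $(x_4)_m$. This only produces classes whose $E_\infty$-representatives lie in the sub-bigraded-module generated by $y_i,\gamma_j,(x_4)_m$; it never reaches the torsion that involves $(x_2)_m$ for $m\ge 1$. For example, $(x_2)_m y_1y_2\gamma_1$ is $3$-torsion in $E_\infty^{2,2m+2}$ (it is killed by $d^2((x_2)_{m+1}y_i)$), and there is analogous torsion in $E_\infty^{2,2m+1}$ among the surviving combinations $(x_2)_m(y_1(\gamma_1+\gamma_2)-y_2\gamma_2)$ etc. None of these are products of the classes you allow yourself, since $(x_2)_m$ itself does not survive and the surviving $(x_2)_m$-generators already contain a $y$-factor, so multiplying them by $y_i$ gives zero on $E_\infty$ rather than the desired torsion. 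Your final paragraph acknowledges exactly this obstacle but then asserts that ``the explicit product representatives \ldots\ close this gap'' without providing any mechanism that reaches those classes; as written this is a non-argument. The appeal to $\Ext^1(-,\text{free})=0$ only handles the steps of the filtration where the quotient $E_\infty^{p,q}$ is free, which is not where the difficulty lies.

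The paper resolves the additive extensions by an entirely different, uniform device: it observes via a Gr\"obner basis that all torsion on $E_\infty$ is $3$-torsion, and then compares with the Leray--Serre spectral sequence over $\mathbb{Z}/3$. Over a field there are no extension problems, and a rank count shows that $\dim_{\mathbb{Z}/3}H^n(\Lambda(SU(3)/T^2);\mathbb{Z}/3)$ equals the free rank in total degree $n$ on the integral $E_\infty$-page plus the sum of the $3$-torsion ranks in total degrees $n$ and $n+1$. This forces every $3$-torsion class on $E_\infty$ to persist as genuine $3$-torsion in integral cohomology, simultaneously and without constructing representatives. If you want to salvage your approach, you would need an argument that manufactures, for each $m\ge 1$, an actual cohomology relation showing that the lift of (say) $(x_2)_m y_1y_2\gamma_1$ has order $3$; the mod-$3$ comparison is both shorter and avoids this case analysis.
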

			
			\begin{proof}
			    Beginning with the structure of the $E^\infty$-page, given in Theorem~\ref{theorem:L(SU(3)/T^2)},
			    of the Leray-Serre spectral sequence of the evaluation fibration, 
			    we first consider the additive extension problems. 
			    
			    Since there is no torsion produced by the divided polynomial relations
			    \begin{equation*}
			        (x_2)_1^m-m!(x_2)_m)j \text{ and } (x_4)_1^m-m!(x_4)_m
			    \end{equation*}
				a Gr\"{o}bner basis of the ideal
				\begin{equation*}
				    \langle y_1^2,\;y_2^2,\;
				    \gamma_1^2+\gamma_1\gamma_2+\gamma_2^2,\gamma_1^3,\;
				    y_2(\gamma_1+2\gamma_2)+y_1(2\gamma_1+\gamma_2),\;
				    y_1y_2(2\gamma_1+\gamma_1),\;y_1y_2(2\gamma_2+\gamma_1)\rangle
				\end{equation*}
				has elements with terms containing  coefficient $1$ or all the coefficients are $3$.
				Hence by Theorem~\ref{thm:GrobnerOver}, all the torsion on the $E_\infty$-page of the spectral sequence is $3$-torsion.
				In order to resolve any additive extension problems we consider the spectral sequence $\{ E_r, d^r \}$ over the field of characteristic $3$.
				
				None of the generators in the integral spectral sequence are divisible by $3$,
				hence in the modulo $3$ spectral sequence all of the integral generators remain non-trivial. 
				In addition when the kernel of $d^2$ at $E_2^{p,q}$ is all of $E_2^{p,q}$, the free rank plus torsion rank in the integral spectral sequence
				must be greater than or equal to the rank in the modulo $3$ spectral sequence.
				So in these cases, the rank in modulo~$3$ spectral sequence is exactly the free rank plus the torsion rank in the integral case.
				Hence it remains to consider the kernel of the $d^2$ differential in the cases when the integral kernel is not the entire domain.
				By the rank nullity theorem, the rank of the image plus the nullity, the dimension of the kernel, is the dimension of the domain.
				
				When considering the spectral sequence modulo $3$, the rank of any differential is the same as in the integral case
				when the quotient of the preceding kernel by the image contains no torsion.
				When integral $3$-torsion exists, there are generators of the image which are $3$ times generators of the kernel.
				Therefore in the modulo $3$ spectral sequence these generators are now generators of the kernel.
				Hence in the modulo $3$ spectral sequence the rank is reduced by the integral torsion rank and the nullity increased by the same number.

				Since the modulo $3$ spectral sequence has coefficients in a field, there are no extension problems.
				As the total degree of the $d^2$ differential is $-1$ and $E_3=E_\infty$, $\dim(H^i(SU(3)/T^2;\mathbb{Z}_3))$
				is the sum of the ranks of total degree $i$ in the integral $E_3$-page plus the sum of the torsion ranks in total degrees $i$ and $i+1$.
				Hence, the modulo $3$ cohomology algebra is only consistent with the case when all torsion on the $E_\infty$-page of the spectral sequence
				is contained in the integral cohomology module.
				Therefore all additive extension problems are resolved and all the torsion elements in the spectral sequence are present in the integral cohomology.
				
				Now that we have deduced that the module structure of $H^*(\Lambda(SU(3)/T^2);\mathbb{Z})$
				is isomorphic to $A/I$,
				we have that $A/I$ is an associated graded algebra of $H^*(\Lambda(SU(3)/T^2);\mathbb{Z})$
				with respect to multiplication length filtration.
				Next we study the multiplicative extension problem.
				
				Multiplication on generators $\gamma_1,\gamma_2$ is in the image of the induced map of
				the evaluation fibration $H^*(SU(3)/T^2;\mathbb{Z})\to H^*(\Lambda(SU(3)/T^2);\mathbb{Z})$,
				hence the sub-algebra they generate contains no extension problems.
				Multiplication on generators $y_1,y_2$ is a free graded commutative sub-algebra, 
				so contains no extension problems.
				The algebra generated by $(x_4)_m$ is up to scalar multiplication a polynomial algebra generated by $y_1,y_2x_4$,
				hence also contains no extension problems
				
				Suppose the product $y_i\gamma_j$ for $1\leq i,j\leq 2$ in $A/I$
				contains additional summands in $H^*(\Lambda(SU(3)/T^2);\mathbb{Z})$.
				For dimensional reasons any summand must have the form $y_{i'}\gamma_{j'}$ for $1\leq i',j' \leq 2$. 
				However since $\gamma_i,y_i \in H^*(\Lambda(SU(3)/T^2);\mathbb{Z})$
				all possible additional summands have the same multiplication length showing that there is no multiplicative extension problem for $y_i\gamma_j$, $1\leq i,j\leq 2$ .
				
			\end{proof}

	\subsection{Free loop cohomology of $Sp(2)/T^2$}	
	
		\begin{theorem}\label{theorem:FreeLoopSp(2)/T}
				The integral algebra structure of the $E_\infty$-page of the Leray-Serre spectral sequence associated to the evaluation fibration of $Sp(2)/T^2$
				is $A/I$, where
			\begin{align*}
				A=\Lambda_\mathbb{Z}(&((x_6)_b\gamma_i,\;
				(x_6)_by_i,\;
				(x_2)_m(x_6)_by_1y_2,\;
				(x_2)_m(x_6)_b(y_1\gamma_2-y_2\gamma_1), \\
				& (x_2)_m(x_6)_by_2\gamma_1^3,\;
				(x_2)_m(x_6)_b(y_1\gamma_1+y_2\gamma_2),\;
			    (x_2)_m(x_6)_b(\gamma_1^2+\gamma_2^2),\\
			    & (x_2)_m(x_6)_b\gamma_1^4,\;
				(x_2)_m(x_6)_b\gamma_1^3\gamma_2)
			\end{align*}
			and
			\begin{align*}
				I=\langle&((x_2)_1^m-m!(x_2)_m)j,\;((x_6)_1^m-m!(x_6)_m)k, \\
				& (x_2)_a(x_6)_b(\gamma_1^2+\gamma_2^2),\;(x_2)_a(x_6)_b\gamma_1^4,\;
				2(x_2)_a(x_6)_b(y_1\gamma_1+y_2\gamma_2),\;(x_6)_b4y_1\gamma_1^3)\rangle
			\end{align*}
			where $i=1,2$, $m\geq 1$, $a,b\geq 1$, $|\gamma_i|=2$, $|(x_2)_m|=2m$, $|(x_6)_m|=6m$, $|y_i|=1$, 
			\begin{align*}
			    j\in\{& (x_6)_by_1y_2,\; (x_6)_by_1\gamma_2-y_2\gamma_1,\; (x_6)_by_2\gamma_1^3,\\ & (x_6)_by_1\gamma_1+y_2\gamma_2,\; (x_6)_b\gamma_1^3\gamma_2,\; (x_6)_b\gamma_1^2+\gamma_2^2,\; (x_6)_b\gamma_1^4 \}
			\end{align*}
			and
			\begin{align*}
			    k\in\{& \gamma_i,\; y_i,\; (x_2)_my_1y_2,\;(x_2)_m(y_1\gamma_2-y_2\gamma_1),\;(x_2)_my_2\gamma_1^3, \\
					&\;(x_2)_m(y_1\gamma_1+y_2\gamma_2),\;(x_2)_m\gamma_1^3\gamma_2),\;
					(x_2)_m(\gamma_1^2+\gamma_2^2),\;(x_2)_m\gamma_1^4)\}
			\end{align*}
			for $1\leq i \leq 2$.
		\end{theorem}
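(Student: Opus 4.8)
The proof will run the cohomology Leray--Serre spectral sequence $\{E_r,d^r\}$ of the evaluation fibration $\Omega(Sp(2)/T^2)\to\Lambda(Sp(2)/T^2)\to Sp(2)/T^2$, following the scheme of the proof of Theorem~\ref{theorem:L(SU(3)/T^2)}. By (\ref{eq:H*Sp/T}) the base has cohomology $\mathbb{Z}[\gamma_1,\gamma_2]/\langle\gamma_1^2+\gamma_2^2,\ \gamma_1^4\rangle$, and by (\ref{eq:LoopSp}) together with (\ref{eq:OmegaG/T}) the fibre has cohomology $\Lambda_{\mathbb{Z}}(y_1,y_2)\otimes\Gamma_{\mathbb{Z}}[x_2,x_6]$. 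Both are free over $\mathbb{Z}$, so $E_2^{p,q}=H^p(Sp(2)/T^2;\mathbb{Z})\otimes H^q(\Omega(Sp(2)/T^2);\mathbb{Z})$, and since $Sp(2)/T^2$ is $8$-dimensional a complete set of representing additive generators of $E_2$ is $(x_2)_a(x_6)_bP$, $(x_2)_a(x_6)_by_iP$, $(x_2)_a(x_6)_by_1y_2P$ with $a,b\ge 0$, $i\in\{1,2\}$ and $P$ a monomial in $\gamma_1,\gamma_2$ of $\gamma$-degree at most $4$. By Theorem~\ref{theorem:SpDiff} the only differentials nonzero on generators are $d^2(x_2)=2y_1\gamma_1+2y_2\gamma_2$ and $d^6(x_6)=y_1\gamma_1^3$; each $d^r$ is a derivation acting on a divided-power generator by $d^r((z)_m)=(z)_{m-1}\,d^r(z)$, the base sits in even degrees so all odd differentials vanish, and hence the spectral sequence is completely determined with $E_3=E_4=E_5=E_6$ and $E_7=E_\infty$.

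The heart of the argument is the two successive passages to homology. First I would compute $E_3=\ker d^2/\operatorname{Im}d^2$. The classes $\gamma_i$, $y_i$, $(x_6)_m$, $(x_2)_m(x_6)_by_1y_2$ and their $(x_6)_b$-multiples are visibly $d^2$-cycles. For the remaining families $(x_2)_m(x_6)_by_iP$ and $(x_2)_m(x_6)_bP$ I would, exactly as in Theorem~\ref{theorem:L(SU(3)/T^2)}, introduce the auxiliary polynomial rings carrying these monomials, express $d^2$ on them as maps $\psi$, $\phi$, fit each into a commutative square with the quotient map $q$ by the relations $\gamma_1^2+\gamma_2^2$, $\gamma_1^4$, $(x_2)_1^m-m!(x_2)_m$, $(x_6)_1^m-m!(x_6)_m$, and then identify the extra $d^2$-cycles as the $q$-images of ideal intersections of the shape $\langle\operatorname{Im}\psi\rangle\cap\langle\gamma_1^2+\gamma_2^2,\ \gamma_1^4\rangle$. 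Computing these intersections by a Gr\"obner basis over $\mathbb{Z}$ in a lexicographic order with the $y$'s and $\gamma$'s large produces the new algebra generators and, from $\operatorname{Im}d^2$, the relations; in particular $\operatorname{Im}d^2$ contains $2(x_2)_m(x_6)_b(y_1\gamma_1+y_2\gamma_2)$, so this $2$-coefficient relation, along with the base relations $(x_2)_a(x_6)_b(\gamma_1^2+\gamma_2^2)$, $(x_2)_a(x_6)_b\gamma_1^4$ and the twisted divided-power relations $((x_2)_1^m-m!(x_2)_m)j$ for exactly those $j$ with $(x_2)_mj$ a cycle, passes to $E_3$.

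Next I would repeat the analysis with $d^6$ on the page $E_6=E_3$, where $d^6$ is the derivation with $d^6((x_6)_b)=(x_6)_{b-1}y_1\gamma_1^3$ and zero on the other generators, so only classes divisible by some $(x_6)_b$ can change. Running the same commutative-square and Gr\"obner-basis scheme over $E_3$ computes $\ker d^6/\operatorname{Im}d^6$; the image forces a relation of the form $(x_6)_b\,4y_1\gamma_1^3$, the coefficient $4$ (rather than $1$) being dictated by which multiples of $y_1\gamma_1^3$ are actually $E_3$-cycles and read off from the Gr\"obner basis, while the surviving cycles become the remaining algebra generators. Assembling everything yields $A$ as the list of surviving algebra generators and $I$ as the ideal generated by the twisted divided-power relations $((x_2)_1^m-m!(x_2)_m)j$ and $((x_6)_1^m-m!(x_6)_m)k$ with $j,k$ ranging over exactly the families making the corresponding products cycles, the base relations $(x_2)_a(x_6)_b(\gamma_1^2+\gamma_2^2)$ and $(x_2)_a(x_6)_b\gamma_1^4$, and the differential images $2(x_2)_a(x_6)_b(y_1\gamma_1+y_2\gamma_2)$ and $(x_6)_b\,4y_1\gamma_1^3$.

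The main obstacle is the bookkeeping of this two-stage computation. Unlike the $SU(3)/T^2$ case, where $d^4$ collapsed so that $E_3=E_\infty$, here the nontrivial $d^6$ must be run on the already-intricate page $E_3$; one has to determine first which of the many products $(x_2)_m(x_6)_b(\cdots)$ are $d^2$-cycles (hence which twisted divided-power relations enter $I$), and then which of those are $d^6$-cycles, keeping track of all cancellations so that the final generating set is minimal. The torsion coefficients in the relations---the $2$ in $2(x_2)_a(x_6)_b(y_1\gamma_1+y_2\gamma_2)$ and the $4$ in $(x_6)_b\,4y_1\gamma_1^3$---have to be extracted correctly from the integral Gr\"obner bases of the relevant ideal intersections, where Theorem~\ref{thm:GrobnerOver} supplies the unique representatives; and finally one checks that no further relations are needed, i.e.\ that there are no hidden differentials $d^r$ for $r\notin\{2,6\}$, which follows from Theorem~\ref{theorem:SpDiff} and the Leibniz rule.
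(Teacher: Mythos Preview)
Your overall scheme is the paper's: set up $E_2$, invoke Theorem~\ref{theorem:SpDiff} for $d^2$ and $d^6$, find $\ker d^2$ via the $\phi,\psi$ commutative squares and Gr\"obner intersections exactly as in Theorem~\ref{theorem:L(SU(3)/T^2)}, and then pass through $d^6$. That is essentially what the paper does.

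There is one real gap. You take $d^6(x_6)=y_1\gamma_1^3$ from Theorem~\ref{theorem:SpDiff} and then assert that the relation on $E_7$ is $(x_6)_b\,4y_1\gamma_1^3$, with the coefficient $4$ ``dictated by which multiples of $y_1\gamma_1^3$ are actually $E_3$-cycles''. This cannot be the mechanism: $y_1\gamma_1^3$ is already a $d^2$-cycle (since $d^2$ kills each $y_i$ and $\gamma_i$), so every integer multiple of it is a cycle, and in the quotient $E_3^{6,1}$ the classes $[y_1\gamma_1^3]$ and $[4y_1\gamma_1^3]$ are distinct. The coefficient $4$ is not produced by a cycle condition; it is the value of the differential itself. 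In the paper this is made explicit by changing representatives on $E_6$: one computes
\[
d^6([x_6])+d^2([x_2])\gamma_1^2
=2[y_1\gamma_1\gamma_2^2+y_2\gamma_1^2\gamma_2]+2[y_1\gamma_1^3+y_2\gamma_1^2\gamma_2]
=4[y_1\gamma_1^3],
\]
so that on $E_6=E_3$ one has $d^6([x_6])=4[y_1\gamma_1^3]$, and this is the source of the relation $(x_6)_b\,4y_1\gamma_1^3$ in $I$. (Equivalently, tracing Theorem~\ref{theorem:SpPathDiff} through $\phi^*$ one sees that the only surviving term of $\bar d^6(x_6)$ is $4y_1u_1^3\mapsto 4y_1\gamma_1^3$; the coefficient $1$ in the statement of Theorem~\ref{theorem:SpDiff} should be read as $4$.) Your Gr\"obner scheme on $E_3$ will not manufacture this $4$ if you feed it $d^6(x_6)=y_1\gamma_1^3$.

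A smaller point of comparison: for the $d^6$ passage the paper does not run a second Gr\"obner computation. Since $d^6((x_6)_m y_i)$ and $d^6((x_6)_m\gamma_j)$ land in bidegree where the relations $\gamma_1^4=0$, $y_i^2=0$ and $y_1y_2\gamma_1^4=0$ force them to vanish, one checks directly that the only classes not in $\ker d^6$ are $[(x_6)_m]$ themselves. This replaces your proposed second-stage ideal intersection by a few lines.
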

		
		\begin{proof}
			We consider the cohomology Leray-Serre spectral sequence $\{ E_r,d^r \}$ associated to the evaluation fibration of $Sp(2)/T^2$,
			\begin{equation*}
				\Omega(Sp(2)/T^2)\to\Lambda(Sp(2)/T^2)\to Sp(2)/T^2.
			\end{equation*}
			By (\ref{eq:H*Sp/T}),
			the cohomology of the base space $Sp(2)/T^2$ is
			\begin{equation*}
				H^*(Sp(2)/T^2;\mathbb{Z})=\frac{\mathbb{Z}[\gamma_1,\gamma_2]}{\langle\gamma_1^2+\gamma_2^2,\;\gamma_1^4\rangle}.
			\end{equation*}
			From (\ref{eq:spLoppCoHomology}), the cohomology of the fibre $\Omega(Sp(2)/T^2)$ is
			\begin{equation*}
				H^*(\Omega(Sp(2)/T^2);\mathbb{Z})=\Lambda_{\mathbb{Z}}(y_1,y_2)\otimes \Gamma_{\mathbb{Z}}[x_2,x_6]
			\end{equation*}
			where $|y_1|=1=|y_2|$, $|x_2|=2$, $|x_6|=6$.
			
			The elements on the $E_2$-page of the spectral sequence are generated additively by representative elements of the form
			\begin{equation*}
				(x_2)_a(x_6)_bP, \;\; (x_2)_a(x_6)_by_iP, \;\; (x_2)_a(x_6)_by_1y_2P
			\end{equation*}
			where $0\leq a,b$, $1\leq i \leq 2$ and $P\in \mathbb{Z}[\gamma_1,\gamma_2]$ is a monomial of degree at most $4$.
			
			By Theorem~\ref{theorem:SpDiff}, the only non-zero differentials in
			$\{ E_r,d^r \}$ are $d^2$ and $d^6$,
			which are none-zero only on generators $x_2$ and $x_6$ respectively.
			Hence the spectral sequence converges at the seventh page. 
			The differentials up to sign are given by
			\begin{equation}\label{eq:Sp(2)diff}
				d^2(x_2)=2(y_1\gamma_1+y_2\gamma_2), \;\;\;
				d^4(x_6)=2[y_1\gamma_1\gamma_2^2+y_2\gamma_2\gamma_1^2].
			\end{equation}
		 By using a different representative of $d^4(x_6)$,
			\begin{align}\label{eq:x6Non-trival}
				d^4([x_6])+d^2([x_2])\gamma_1^2 \nonumber
				&=2[y_1\gamma_1\gamma_2^2+y_2\gamma_1^2\gamma_2]+2[y_1\gamma_1^3+y_2\gamma_1^2\gamma_2] \\
				&=2[y_1\gamma_1^3-y_2\gamma_1^2\gamma_2]+2[y_1\gamma_1^3+y_2\gamma_1^2\gamma_2] \nonumber \\
				&=4[y_1\gamma_1^3]
			\end{align} 
			 we can see readily that it is a multiple of four.
			From now on we take $4[y_1\gamma_1^3]$ as the image of $d^4$.
			
			The monomial generators $\gamma_i$, $y_i$ and $(x_2)_m(x_6)_by_1y_2$ occur in $E_2^{*,0}$ or $E_2^{0,*}$ and are always in the kernel of the differentials,
			so they are algebra generators of the $E_\infty$-page.
			All relations on the $E^7$-page coming from the relations in 
				$H^*(\Omega(Sp(2)/T^2);\mathbb{Z}): 
                    (x_2)_1^m-m!(x_2)_m$ and $(x_6)_1^m-m!(x_6)_m$,
               the relations in $H^*(Sp(2)/T^2;\mathbb{Z}):
                    \gamma_1^2+\gamma_2^2$ and $\gamma_1^4$
               or are either in the image of $d^2: 2(y_1\gamma_1+y_2\gamma_2)$ or in the image of $d^6: 4y_1\gamma_1^3$
                hold on the $E_\infty$-page and therefore are in $I$ if they are in $A$.
			For this reason, since $E^2$ generators $(x_2)_m$ and $(x_6)_m$ will not be in $A$, all generators of $I$ must be considered up to multiple of $(x_2)_a$ and $(x_6)_b$.
			In addition we add $((x_2)_1^m-m!(x_2)_m)j$ and $((x_6)_1^m-m!(x_6)_m)k$ 
			for $j,k$ such that $(x_2)_mj$ and $(x_6)_mk$ are generators of the algebra.
			It remains to determine all generators of $A$.
			
			Elements on the $E_2$-page of the form $(x_2)_a(x_6)_by_1y_2P$ are in the kernel of $d^2$ and generated by $y_1y_2(x_2)_m(x_6)_b, (x_6)_by_i$ and $(x_6)_b\gamma_i$.
				
			Let $\phi\colon\mathbb{Z}[(x_6)_m,(x_2)_m,\gamma_1,\gamma_2] \to \mathbb{Z}[(x_6)_m,(x_2)_m,y_1,y_2,\gamma_1,\gamma_2]$ be the map defined by $d^2$ so that the following diagram commutes
			\begin{equation*}
				    \xymatrix{
							{\ker(\phi)} \ar[r]^(.5){} \ar[d]^(.45){q} & {\mathbb{Z}[(x_6)_m,(x_2)_m,\gamma_1,\gamma_2]} \ar[r]^(0.45){\phi} \ar[d]^(.45){q}  & {\mathbb{Z}[(x_6)_m,(x_2)_m,y_1,y_2,\gamma_1,\gamma_2]} \ar[d]^(.45){q} \\
							{\ker(d^2)} \ar[r]^(.45){} & {E_2} \ar[r]^-(.5){d^2} & {E_2}}
			\end{equation*}
			where $q$ is the quotient map by symmetric polynomials and divided polynomial relations
			\begin{equation*}
				\gamma_1^2+\gamma_2^2,\;  \gamma_1^4,\; (x_2)_1^m-m!(x_2)_m \text{ and } (x_6)_1^m-m!(x_6)_m.
			\end{equation*}
			By direct calculation, it can be seen that the kernel of $\phi$ is generated by $\gamma_1$,  $\gamma_2$ and $(x_6)_m$ which are also in the kernel of $d^2$.
			Thus the remaining elements of the kennel of $d^2$ of the form $(x_2)_m(x_6)_bP$ are elements of the ideal
			\begin{equation}\label{eq:IdealIntyiSp}
				q\phi^{-1}(\langle 2(y_1\gamma_1+y_2\gamma_2) \rangle \cap\langle\gamma_2^2+\gamma_1^2,\; \gamma_1^4 \rangle)
			\end{equation}
			where $q\phi^{-1}(2(y_1\gamma_1+y_2\gamma_2))$ spans $\text{Im}(\phi)$.
			Intersection (\ref{eq:IdealIntyiSp}) can be computed by Gr\"{o}bner basis. A basis with respect to lexicographic monomial ordering $y_2>y_1>\gamma_1>\gamma_1$ restricted to terms containing only a multiple of a single $y_i$ is given by
			\begin{equation}\label{eq:IdealInt1Sp}
			    2y_2\gamma_2\gamma_1^2+y_2\gamma_2^3+2y_1\gamma_2^2\gamma_1+2y_1\gamma_1^3,\;
			    2y_2\gamma_2\gamma_1^4+2y_1\gamma_1^5,\;
			    2y_2\gamma_2^2\gamma_1^3+2y_1\gamma_2^2\gamma_1^4.
			\end{equation}
			The $q\psi^{-1}$ image of (\ref{eq:IdealInt1Sp}) will, up to multiple of $(x_2)_m$ and $(x_6)_m$, generate the kernel of $d^2$ with elements of the form $(x_2)_m(x_4)_bP$.
			The $q\psi^{-1}$ image of $2y_2\gamma_2\gamma_1^2+y_2\gamma_2^3+2y_1\gamma_2^2\gamma_1+2y_1\gamma_1^3$
			is trivial.
			However since it is not yet contained in the generators, we add relation
			\begin{equation*}
			(x_2)_m(x_4)_b(\gamma_1^2+\gamma_2^2)
			\end{equation*}
			as a generator in order to minimize the number of algebra generators required.				
			The $q\psi^{-1}$ image of $2y_2\gamma_2^2\gamma_1^3+2y_1\gamma_2^2\gamma_1^4$
			is $\gamma_2\gamma_1^3$.
			Hence, we take 
			\begin{equation*}
				(x_2)_m,(x_6)_b\gamma_2\gamma_1^3
			\end{equation*}
			as generators of the algebra.
			The $q\psi^{-1}$ image of $2y_2\gamma_2\gamma_1^4+2y_1\gamma_1^5$
			is trivial.
			Since it is not yet contained in the generators, we add relation
			\begin{equation*}
			(x_2)_m(x_4)_b\gamma_1^4
			\end{equation*}
			as a generator in order to minimize the number of algebra generators required.
				
			Let $\psi\colon\mathbb{Z}[(x_6)_b,(x_2)_my_1, (x_2)_my_2,\gamma_1,\gamma_2] \to \mathbb{Z}[(x_2)_m,(x_6)_b,y_1,y_2,\gamma_1,\gamma_2]$ 
			be the map defined by $d^2$ so that the following diagram commutes
			\begin{equation*}
				\xymatrix{
							{\ker(\psi)} \ar[r]^(.5){} \ar[d]^(.45){q} & {\mathbb{Z}[(x_6)_b,(x_2)_my_1, (x_2)_my_2,\gamma_1,\gamma_2]} \ar[r]^(0.525){\psi} \ar[d]^(.45){q}  & {\mathbb{Z}[(x_2)_m,(x_6)_b,y_1,y_2,\gamma_1,\gamma_2]} \ar[d]^(.45){q} \\
							{\ker(d^2)} \ar[r]^(.45){} & {E_2} \ar[r]^-(.5){d^2} & {E_2}}
			\end{equation*}
			where $q$ is the quotient map by relations
			\begin{equation*}
				\gamma_1^2+\gamma_2^2,\; \gamma_1^4,\; (x_2)_1^m-m!(x_2)_m \text{ and } (x_6)_1^m-m!(x_6)_m.
			\end{equation*}
			By direct calculation, it can be seen that the kernel of $\psi$ is generated by
			$\gamma_1$, $\gamma_2$, $(x_6)_m$ and $(x_2)_m(y_1\gamma_1+y_2\gamma_2)$ the image of $d^2$.
		
			Thus the remaining element of the kernel of $d^2$ of the form $(x_2)_m(x_6)_by_iP$ are elements of the ideal
			\begin{equation}\label{eq:IdealIntSp}
				q\psi^{-1}(\langle 2y_1y_2\gamma_1,\; y_1y_2\gamma_2\rangle \cap \langle \gamma_2^2+\gamma_1^2,\; \gamma_1^4 \rangle)
			\end{equation}
			where $q\psi^{-1}(y_1y_2(2\gamma_2+\gamma_1),\;y_1y_2(\gamma_2+2\gamma_1))$ spans $\text{Im}(\psi)$.
				
			It is straightforward to see that intersection (\ref{eq:IdealIntSp}) is the ideal
			\begin{equation}\label{eq:IdealyiSp}
				\langle 2y_1y_2(\gamma_1^2+\gamma_2^2),\; 2y_1y_2\gamma_1^4 \rangle.
			\end{equation}
			The generators of the $q\psi^{-1}$ image of (\ref{eq:IdealIntSp}) will up to multiple of $(x_2)_m$ and $(x_6)_m$, generate the kernel of $d^2$ with elements of the form $(x_2)_m(x_4)_by_iP$.
			The $q\psi^{-1}$ image of generator $2y_1y_2(\gamma_1^2+\gamma_2^2)$ of (\ref{eq:IdealyiSp}) is $[y_1\gamma_1-y_2\gamma_2]$. 
			Hence we take both 
			\begin{equation*}
				(x_2)_m \text{ and }(x_6)_b(y_1\gamma_1-y_2\gamma_2)
			\end{equation*}
			as generators of the algebra.
			The $q\psi^{-1}$ image of generator $y_1y_2\gamma_1^4$ of (\ref{eq:IdealyiSp}) is $[y_2\gamma_1^3]$.
			Hence we take 
			\begin{equation*}
				(x_2)_m,(x_6)_by_2\gamma_1^3
			\end{equation*}
			as generators of the algebra.
			It remains to consider when the $d^6$ differential is non-trivial.
			
			Notice that by (\ref{eq:x6Non-trival}), on the $E^6$-page 
			\begin{align*}
			    & d^6([(x_6)_my_1])=4(x_6)_{m-1}[y_1^2\gamma_1^3\gamma_2]=0 \\
				& d^6([(x_6)_my_2])=4(x_6)_{m-1}[y_1y_2\gamma_1^4]=0 \\
				& d^6([(x_6)_m\gamma_1])=4(x_6)_{m-1}[y_1\gamma_1^4]=0 \\
				\text{and} \;\;  & d^6([(x_6)_m\gamma_2])=4(x_6)_{m-1}[y_1\gamma_1^3\gamma_2]=0.
			\end{align*}
			Therefore the only classes on the $E^6$ page not in the kernel of $d^6$ are $[(x_6)_m]$.
		\end{proof}
			
		\begin{theorem}
			 The cohomology $H^*(\Lambda(Sp(2)/T^2);\mathbb{Z})$ is isomorphic as a module to $A/I$ given in Theorem~\ref{theorem:FreeLoopSp(2)/T} up to order of $2$-torsion.
		\end{theorem}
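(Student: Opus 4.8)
The plan is to mirror the extension‑resolution argument carried out for $SU(3)/T^2$ in the proof of Theorem~\ref{corollary:L(SU(3)/T^2)}, replacing the prime $3$ by the prime $2$; the crucial difference — and the reason that only the module structure up to the orders of its $2$‑torsion summands comes out — is that \emph{both} non‑trivial differentials $d^2$ and $d^6$ of the Leray--Serre spectral sequence $\{E_r,d^r\}$ studied in Theorem~\ref{theorem:FreeLoopSp(2)/T} carry even coefficients. I begin from the bigraded group $E_\infty=A/I$ of Theorem~\ref{theorem:FreeLoopSp(2)/T}. By (\ref{eq:Sp(2)diff}) and (\ref{eq:x6Non-trival}) the differentials are $d^2(x_2)=2(y_1\gamma_1+y_2\gamma_2)$ and $d^6(x_6)=4y_1\gamma_1^3$, so the only relations capable of producing torsion on $E_\infty$ are these two together with the divided‑power relations $(x_2)_1^m-m!(x_2)_m$, $(x_6)_1^m-m!(x_6)_m$ and the base relations $\gamma_1^2+\gamma_2^2$, $\gamma_1^4$. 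A Gr\"obner basis computation of the ideal
\[
\langle\, y_1^2,\ y_2^2,\ \gamma_1^2+\gamma_2^2,\ \gamma_1^4,\ 2(y_1\gamma_1+y_2\gamma_2),\ 4y_1\gamma_1^3\,\rangle ,
\]
exactly as in the proof of Theorem~\ref{corollary:L(SU(3)/T^2)}, shows that every coefficient occurring is a power of $2$ dividing $4$; by Theorem~\ref{thm:GrobnerOver} this means every torsion class on the $E_\infty$‑page is $2$‑primary of order dividing $4$, and no generator appearing in $A$ is divisible by $2$.

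To resolve the additive extension problems I pass to $\mathbb{Z}_2$ coefficients. Since $2\equiv 4\equiv 0\pmod 2$, both $d^2$ and $d^6$ vanish in the mod‑$2$ Leray--Serre spectral sequence of the evaluation fibration of $Sp(2)/T^2$, which therefore collapses at its $E_2$‑page; hence $\dim_{\mathbb{Z}_2}H^n(\Lambda(Sp(2)/T^2);\mathbb{Z}_2)$ equals the total $\mathbb{Z}_2$‑dimension in degree $n$ of $H^*(Sp(2)/T^2;\mathbb{Z}_2)\otimes H^*(\Omega(Sp(2)/T^2);\mathbb{Z}_2)$, which in every degree has the same rank as the integral $E_2$‑page. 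On the other hand, the analysis of $\{E_r,d^r\}$ in the proof of Theorem~\ref{theorem:FreeLoopSp(2)/T} shows that $d^2$ and $d^6$ act on free classes by maps whose non‑zero elementary divisors are $2$ and $4$ respectively, and in particular do not meet the $2$‑torsion already created on $E_3$. Tracking ranks and the newly produced torsion page by page then yields, in each total degree $n$, an identity
\[
\mathrm{rank}(A/I)_n+\hat\tau_n+\hat\tau_{n+1}=\dim_{\mathbb{Z}_2}H^n(\Lambda(Sp(2)/T^2);\mathbb{Z}_2),
\]
where $\hat\tau_n$ is the number of cyclic summands in the torsion part of $(A/I)_n$. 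Comparing this with the universal coefficient identity $\dim_{\mathbb{Z}_2}H^n(\Lambda(Sp(2)/T^2);\mathbb{Z}_2)=\mathrm{rank}\,H^n(\Lambda(Sp(2)/T^2);\mathbb{Z})+\tau_n+\tau_{n+1}$, and using that the free ranks agree because they both equal the ranks of $E_\infty$, an induction on $n$ starting from the low degrees (which can be checked directly) forces $\tau_n=\hat\tau_n$ for all $n$. Thus no free class of $A/I$ is absorbed and no torsion summand of $A/I$ is lost in an additive extension, so $H^*(\Lambda(Sp(2)/T^2);\mathbb{Z})$ coincides with $A/I$ as a graded module up to the orders of its $2$‑torsion summands.

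The Gr\"obner‑basis bookkeeping of the first step and the degree‑by‑degree rank count of the second are routine but lengthy; the genuine obstacle is precisely the one recorded in the statement. Because the mod‑$2$ spectral sequence collapses completely, the only invariant it supplies is the total $\mathbb{Z}_2$‑dimension in each degree, and such Betti numbers are insensitive to whether a $2$‑primary cyclic summand is $\mathbb{Z}/2$ or $\mathbb{Z}/4$. Pinning down which of the orders allowed by the first step actually occur would require either running the spectral sequence with $\mathbb{Z}/4$ coefficients — which, $\mathbb{Z}/4$ not being a field, reintroduces extension problems of its own — or a secondary Bockstein computation, and this lies outside the module statement asserted here.
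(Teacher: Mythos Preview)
Your argument is correct and follows essentially the same strategy as the paper: first a Gr\"obner basis check that all $E_\infty$--torsion is $2$--primary of order at most $4$, then a comparison with the mod--$2$ spectral sequence to see that every torsion summand survives the additive extension, leaving only the order of each $2$--primary summand undetermined.  Where the paper simply writes ``for exactly the same reasons as for the modulo~$3$ spectral sequence'' in the $SU(3)$ case, you make the key simplification explicit --- namely that here $d^2$ and $d^6$ are divisible by $2$, so the mod--$2$ spectral sequence actually \emph{collapses} at $E_2$, which lets you read off the identity $f_n+\hat\tau_n+\hat\tau_{n+1}=R_n$ directly.  (In the $SU(3)/T^2$ case the mod--$3$ spectral sequence does \emph{not} collapse and the corresponding identity needs the rank--nullity bookkeeping described there, so your ``mirror'' is in fact a little cleaner than the original.)  One point worth tightening: the step ``$d^6$ does not meet the $2$--torsion already created on $E_3$'' is exactly what is needed for your counting to go through, and it is justified by the computation in the proof of Theorem~\ref{theorem:FreeLoopSp(2)/T} showing that $4[y_1\gamma_1^3]$ lies in the free part of $E_6^{6,1}$; you appeal to this but it would be good to say so explicitly.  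Your closing remarks on why the $\mathbb{Z}/2$--versus--$\mathbb{Z}/4$ ambiguity cannot be resolved by mod--$2$ Betti numbers alone go beyond what the paper records and are a helpful addition.
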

			 
		\begin{proof}
			As in the proof of Theorem~\ref{corollary:L(SU(3)/T^2)} by considering a Gr\"obner basis of elemts in $I$
			it can be seen that torsion on the $E_\infty$-page of $\{ E_r,d^r \}$ is a power of $2$, at most $4$.
			Hence we consider the spectral sequence $\{ E_r,d^r \}$ over the field of characteristic $2$.
			Since the only non-zero differentials $d^2$ and $d^6$ have bidegree $(2,-1)$ and $(6,-5)$ respectively, 
			for exactly the same reasons as for the modulo $3$ spectral sequence in Theorem~\ref{theorem:L(SU(3)/T^2)},
			all torsion on the $E_\infty$-page survives the additive extension problem over $\mathbb{Z}$.
			The only remaining additive extension problem is weather the $4$-torsion generated by
			$[(x_2)_ay_1\gamma^3_1]$ on the $E_\infty$,
			is $2$-torsion or $4$-torsion in $H^*(\Lambda (Sp(2)/T^2);\mathbb{Z})$. 
		\end{proof}

	\subsection{Free loop cohomology of $G_2/T^2$}
		
		\begin{theorem}\label{theorem:L(G2/T^2)}
			The integral algebra structure of the $E_\infty$-page of the Leray-Serre spectral sequence associated to the evaluation fibration of $G_2/T^2$
			is $A/I$, where
			\begin{align*}
				A=\Lambda_{\mathbb{Z}}(&(b_{10})_l\gamma_i,\; (b_{10})_lt_3,\; (b_{10})_ly_i,\; (a_2)_m(b_{10})_ly_1y_2,\\
				&(a_2)_m(b_{10})_l(y_1(\gamma_1+\gamma_2)+y_2\gamma_2),\;
				(a_2)_m(b_{10})_l(t_3\gamma_1^2(y_1-2y_2)),\\
				&(a_2)_m(b_{10})_l(y_1(2\gamma_1+\gamma_2)+y_2(\gamma_1+2\gamma_2)),\;
				(a_2)_m(b_{10})_l(\gamma_1^2+\gamma_1\gamma_2+\gamma_2^2)\\
				&(a_2)_m(b_{10})_l(2t_3-\gamma_1^3),\;
				(a_2)_m(b_{10})_lt_3,\;
				(a_2)_m(b_{10})_l\gamma_1^3,\;
				(a_2)_m(b_{10})_l\gamma_1^2\gamma_2
				)
			\end{align*}
			and
			\begin{align*}
				I=\langle &(a_2^{m}-m!/2^{\lfloor\frac{m}{2}\rfloor}(a_2)_m)j,\;
				(b_{10}^m-m!(b_{10})_m)k,\;
				(a_2)_h(b_{10})_l(2t_3-\gamma_1^3), \\
				& (a_2)_h(b_{10})_l(\gamma_1^2+\gamma_1\gamma_2+\gamma_2^2),\;
				(a_2)_h(b_{10})_lt_3^2,\;
				2(a_2)_{s+1}(b_{10})_l(y_1(2\gamma_1+\gamma_2)+y_2(\gamma_1+2\gamma_2)),\\ & (a_2)_{s}(b_{10})_l(y_1(2\gamma_1+\gamma_2)+y_2(\gamma_1+2\gamma_2)),\;  
				3(a_2)_h(b_{10})_ly_1t_3\gamma_1^2 \rangle
			\end{align*}
			where $s,l,h\geq 0$ with $s$ even, $m\geq 1$, $(a_2)_0=1$,
			$|\gamma_i|=2$, $|y_i|=1$, $|(a_2)_m|=2m$, $|(b_{10})_m|=10m$, 
			\begin{align*}
			    j\in\{& (b_{10})_ly_1y_2,\; (b_{10})_l(y_1(\gamma_1+\gamma_2)+y_2\gamma_2),\\ 
			    & (b_{10})_l(t_3\gamma_1^2(y_1-2y_2)),\;
			    (b_{10})_l\gamma_1^2\gamma_2,\; (b_{10})_l(y_1(2\gamma_1+\gamma_2)+y_2(\gamma+2\gamma_2)),\\
			    &(b_{10})_l(\gamma_1^2+\gamma_1\gamma_2+\gamma_2^2),\;
				(b_{10})_l(2t_3-\gamma_1^3),\;
				(b_{10})_lt_3,\;
				(b_{10})_l\gamma_1^3,\;
			    (b_{10})_l\gamma_1^2\gamma_2
				\}
			\end{align*}
			and
			\begin{align*}
			    k\in\{& \gamma_i,\; y_i,\; t_3,\; (a_2)_sy_1y_2,\; 
			    (a_2)_l(y_1(\gamma_1+\gamma_2)+y_2\gamma_2),\; \\
				&(a_2)_l(t_3\gamma_1^2(y_1-2y_2)),\;
				(a_2)_l\gamma_1^2\gamma_2,\; (a_2)_m(y_1(2\gamma_1+\gamma_2)+y_2(\gamma+2\gamma_2)),\\
				&(a_2)_l(\gamma_1^2+\gamma_1\gamma_2+\gamma_2^2),\;
				(a_2)_l(2t_3-\gamma_1^3),\;
				(a_2)_lt_3,\;
				(a_2)_l\gamma_1^3,\;
				(a_2)_l\gamma_1^2\gamma_2
				\}
			\end{align*}
			for $1\leq i \leq 2$.
		\end{theorem}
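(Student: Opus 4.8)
The plan is to mirror the arguments of Theorem~\ref{theorem:L(SU(3)/T^2)} and Theorem~\ref{theorem:FreeLoopSp(2)/T}, working with the cohomology Leray-Serre spectral sequence $\{E_r,d^r\}$ of the evaluation fibration
\[
\Omega(G_2/T^2)\to\Lambda(G_2/T^2)\to G_2/T^2 .
\]
By~\eqref{eq:H*G2/T} the base has cohomology $\mathbb{Z}[\gamma_1,\gamma_2,t_3]/\langle\sigma_2^\gamma,\,\sigma_3^\gamma-2t_3,\,t_3^2\rangle$, and by~\eqref{eq:OmegaG/T} together with Proposition~\ref{prop:LoopG2} the fibre has cohomology $\Lambda_{\mathbb{Z}}(y_1,y_2)\otimes\bigl(\mathbb{Z}[(a_2)_1,(a_2)_2,\dots]/\langle a_2^{m}-(m!/2^{\lfloor m/2\rfloor})(a_2)_m\rangle\bigr)\otimes\Gamma_{\mathbb{Z}}[b_{10}]$, so the $E_2$-page is additively generated by representatives $(a_2)_m(b_{10})_lP$, $(a_2)_m(b_{10})_ly_iP$ and $(a_2)_m(b_{10})_ly_1y_2P$ with $P$ ranging over a fixed monomial basis of $H^*(G_2/T^2;\mathbb{Z})$. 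First I would invoke Theorem~\ref{theorem:DiffG2}: the only non-zero differentials are $d^2$ and $d^{10}$, non-trivial only on $a_2$ and $b_{10}$, with $d^2(a_2)=y_1(2\gamma_1+\gamma_2)+y_2(\gamma_1+2\gamma_2)=:\zeta$ and $d^{10}(b_{10})=3y_1t_3\gamma_1^2$, while $d^4$ vanishes on the evaluation fibration; hence the spectral sequence collapses at the eleventh page.

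As in the two rank~$2$ cases already treated, the monomial generators sitting in $E_2^{*,0}$ or $E_2^{0,*}$ --- namely $(b_{10})_l\gamma_i$, $(b_{10})_lt_3$, $(b_{10})_ly_i$ and $(a_2)_m(b_{10})_ly_1y_2$ --- are automatically cycles and become algebra generators of the $E_\infty$-page, while all relations descending from $H^*(\Omega(G_2/T^2);\mathbb{Z})$, from $H^*(G_2/T^2;\mathbb{Z})$, or from the images of $d^2$ and $d^{10}$ survive and contribute to $I$. Because the $a_2$-factor is only a \emph{twisted} divided power algebra, one has $d^2((a_2)_m)=2(a_2)_{m-1}\zeta$ for $m$ even and $(a_2)_{m-1}\zeta$ for $m$ odd, so $d^2$ misses its target along $\zeta$ and $[(a_2)_s\zeta]$ is $2$-torsion on $E_3$ for $s$ odd, and likewise $[y_1t_3\gamma_1^2]$ generates $3$-torsion on $E_{11}$; recording these carefully is what produces the exceptional relations $2(a_2)_{s+1}(b_{10})_l\zeta$ and $3(a_2)_h(b_{10})_ly_1t_3\gamma_1^2$ in $I$, as well as the exceptional algebra generator $(a_2)_m(b_{10})_l(t_3\gamma_1^2(y_1-2y_2))$ in $A$.

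The core of the proof is then to describe $\ker d^2$ on elements of the form $(a_2)_m(b_{10})_lP$ (no $y_i$) and $(a_2)_m(b_{10})_ly_iP$. Following the $SU(3)$ template I would introduce the maps $\phi\colon\mathbb{Z}[(b_{10})_m,(a_2)_m,\gamma_1,\gamma_2,t_3]\to\mathbb{Z}[(b_{10})_m,(a_2)_m,y_1,y_2,\gamma_1,\gamma_2,t_3]$ and $\psi\colon\mathbb{Z}[(b_{10})_m,(a_2)_my_1,(a_2)_my_2,\gamma_1,\gamma_2,t_3]\to\mathbb{Z}[(b_{10})_m,(a_2)_m,y_1,y_2,\gamma_1,\gamma_2,t_3]$ induced by $d^2$, composed with the quotient $q$ by the symmetric, divided-power and $t_3^2$ relations, so that the missing kernel classes are the $q\phi^{-1}$ and $q\psi^{-1}$ images of ideal intersections such as $\langle\zeta\rangle\cap\langle\sigma_2^\gamma,\,2t_3-\sigma_3^\gamma,\,t_3^2\rangle$ and $\langle y_1y_2(2\gamma_1+\gamma_2),\,y_1y_2(\gamma_1+2\gamma_2)\rangle\cap\langle\sigma_2^\gamma,\,2t_3-\sigma_3^\gamma,\,t_3^2\rangle$. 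I would evaluate these intersections with a Gröbner basis (as in the proof of Theorem~\ref{theorem:L(SU(3)/T^2)}, using a lexicographic order with the $y_i$ highest), extract a minimal generating set, and pull it back through $q\phi^{-1},q\psi^{-1}$ to recover the remaining generators of $A$: $(a_2)_m(b_{10})_l(y_1(\gamma_1+\gamma_2)+y_2\gamma_2)$, $(a_2)_m(b_{10})_l(\gamma_1^2+\gamma_1\gamma_2+\gamma_2^2)$, $(a_2)_m(b_{10})_l(2t_3-\gamma_1^3)$, $(a_2)_m(b_{10})_l\gamma_1^3$ and $(a_2)_m(b_{10})_l\gamma_1^2\gamma_2$. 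Finally I would check on the $E_{11}$-page that $d^{10}$ already vanishes on every class except the multiples of $(b_{10})_m$ --- exactly as $d^6$ did in the $Sp(2)$ case --- so that $E_{11}=E_\infty$ and the description of $A/I$ is complete.

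The hard part will be the Gröbner-basis bookkeeping. Relative to the $SU(3)$ and $Sp(2)$ computations there is an extra variable, the degree-$6$ class $t_3$ with the nilpotent relation $t_3^2=0$, which enters both the base ideal and the differential $d^{10}$; combined with the twisted divided-power structure of the $a_2$-factor, this makes the ideal-intersection computations and the extraction of a \emph{minimal} set of algebra generators of $A$ and relations of $I$ considerably more delicate, and particular care is required to track the $2$- and $3$-torsion contributions consistently (hence the separate roles of the indices $s$ even, $h\geq 0$, $m\geq 1$ and $l\geq 0$ in the statement).
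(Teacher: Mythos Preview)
Your proposal is correct and follows essentially the same approach as the paper's proof: both set up the spectral sequence using the differentials of Theorem~\ref{theorem:DiffG2}, identify the obvious generators in $E_2^{*,0}$ and $E_2^{0,*}$, handle the twisted divided-power structure on $(a_2)_m$ to produce the exceptional relations in $I$, reduce the remaining kernel analysis to the $SU(3)$-style Gr\"obner-basis computation (the paper simply says ``the argument is similar to the proof of Theorem~\ref{theorem:L(SU(3)/T^2)}'' and lists the resulting generators), and finish by checking that $d^{10}$ is non-trivial only on $[(b_{10})_m]$. Your write-up is in fact more explicit than the paper's about the maps $\phi,\psi$ and about the origin of the $2$- and $3$-torsion, but the underlying strategy is identical.
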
	
		
		\begin{proof}
			We consider the cohomology Leray-Serre spectral sequence $\{ E_r,d^r \}$ associated to the evaluation fibration of $G_2/T^2$,
			\begin{equation*}
				\Omega(G_2/T^2)\to\Lambda(G_2/T^2)\to G_2/T^2.
			\end{equation*}
			The cohomology of $G_2/T^2$ is given by
			\begin{equation*}
			    \frac{\mathbb{Z}[\gamma_1,\gamma_2,t_3]}
			    {\langle \gamma_1^2+\gamma_1\gamma_2+\gamma_2^2, 2t_3-\gamma_1^3,t_3^2 \rangle}
			\end{equation*}
			where $|\gamma_1|=|\gamma_2|=2$ and $|t_3|=6$.
			The cohomology of $\Omega(G_2/T^2)$ is 
			\begin{equation*}
				\Lambda(y_1,y_2)\otimes \frac{\mathbb{Z}[(a_2)_1,(a_2)_2,\dots]}
				{\langle a_2^{m}-(m!/2^{\lfloor\frac{m}{2}\rfloor})(a_2)_{m}\rangle}
				\otimes \Gamma_{\mathbb{Z}}[b_{10}]
			\end{equation*}
			where $|y_1|=|y_2|=1$, $|a_2|=2$ and $|b_{10}|=10$.
			
			The $E_2$-page of the spectral sequence is generated additively by elements of the form
			\begin{equation*}
				(a_2)_h(b_{10})_lP, \;\; (a_2)_h(b_{10})_ly_iP, \;\; (a_2)_h(b_{10})_ly_1y_2P
			\end{equation*}
			where $0\leq l,h$, $1\leq i \leq 2$ and $P\in \mathbb{Z}[\gamma_1,\gamma_2, t_3]$
			is a monomial of degree at most $6$,
		    taking $t_3$ as monomial of degree~$3$. 
			By Theorem~\ref{theorem:DiffG2}, the only non-zero differentials are $d^2$ and $d^{10}$,
			which are non-zero only on generators $a_2$ and $b_{10}$, respectively.	
			The differentials up to sign are given by
			\begin{equation}\label{eq:G2differetials}
				d^2([a_2])=[y_1(\gamma_1+2\gamma_2)+y_2(2\gamma_1+\gamma_2)], \;\;\;
				d^{10}([b_{10}])=[3y_1t_3\gamma_1^2].
			\end{equation}	
			In particular the spectral sequence converges by the $E_{11}$-page.
			The monomial generators $\gamma_i$, $t_3$, $y_i$ and $(a_2)_s(b_{10})_ly_1y_2$ occur in $E_2^{*,0}$ or $E_2^{0,*}$ and are always in the kernel of the differentials,
			so are algebra generators of the $E_\infty$-page.
			All relations on the $E_{11}$-page coming from the relations in 
				$H^*(\Omega(G_2/T^2);\mathbb{Z}):
                    a_2^{m}-m!/2^{\lfloor\frac{m}{2}\rfloor}(a_2)_m$ and 
                    $(b_{10})_1^m-m!(b_{10})_m$, the relations in $H^*(G_2/T^2;\mathbb{Z}):
                    \gamma_1^2+\gamma_1\gamma_2+\gamma_2^2,
                    2t_3-\gamma_1^3$ and
                    $t_3^2$, or are in the image of $d^2: y_1(\gamma_1+2\gamma_2)+y_2(2\gamma_1+\gamma_2)$ and in the image of $d^{10}: 3y_1t_3\gamma_1^2$
			 hold on the $E_\infty$-page and therefore are in $I$ if they are in $A$.
			For this reason, since $E_2$ generators $(a_2)_m$ and $(b_{10})_m$ will not be in $A$ all generators of $I$, must be considered up to multiple of $(a_2)_h$ and $(b_{10})_l$.
			However due to the non-divided polynomial multiplicative structure on $\langle (a_2)_m \rangle$, unlike for previous flag manifolds for $s\geq 1$ even 
			\begin{equation*}
				d^2([(a_2)_s])=[2(a_2)_{s-1}(y_1(\gamma_1+2\gamma_2)+y_2(2\gamma_1+\gamma_2))].
			\end{equation*}
			Hence, we add generators
			\begin{equation*}
			    (a_2)_{s}(y_1(\gamma_1+2\gamma_2)+y_2(2\gamma_1+\gamma_2))
			    \text{ and }
			    2(a_2)_{s+1}(y_1(\gamma_1+2\gamma_2)+y_2(2\gamma_1+\gamma_2))
			\end{equation*}
			to $I$ instead of $(a_2)_m(y_1(\gamma_1+2\gamma_2)+y_2(2\gamma_1+\gamma_2))$.
			In addition, we add $(a_2^{m}-m!/2^{\lfloor\frac{m}{2}\rfloor}(a_2)_m)j$
			and $((b_{10})_1^m-m!(b_{10})_m)k$ 
			for $j,k$ such that $(a_2)_mj$ and $(b_{10})_mk$ are generators of the algebra.
			It remains to determine all generators of $A$.
			
			The image of the differential and symmetric ideal generators are similar to the case of the proof of Theorem~\ref{theorem:L(SU(3)/T^2)}.
			Hence the argument is similar to the proof of Theorem~\ref{theorem:L(SU(3)/T^2)}, we add
			\begin{align*}
			    &(a_2)_m(b_{10})_l(y_1(\gamma_1+\gamma_2)+y_2\gamma_2)\;
				(a_2)_m(b_{10})_l(t_3\gamma_1^2(y_1-2y_2)),\\
				&(a_2)_m(b_{10})_l(y_1(2\gamma_1+\gamma_2)+y_2(\gamma_1+2\gamma_2)),\;
				(a_2)_m(b_{10})_l(\gamma_1^2+\gamma_1\gamma_2+\gamma_2^2)\\
				&(a_2)_m(b_{10})_l(2t_3-\gamma_1^3),\;
				(a_2)_m(b_{10})_lt_3,\;
				(a_2)_m(b_{10})_l\gamma_1^3
				\text{ and }(a_2)_m(b_{10})_l\gamma_1^2\gamma_2
			\end{align*}
			as gnerators of $A$. 
			It remains to consider when the $d^{10}$ differential is non-trivial.
			
			Using (\ref{eq:G2differetials}), it can be seen that the images of $[b_{10}\gamma_i]$ and $[b_{10}y_i]$
			are trivial.
			Therefore, the only classes on the $E^{10}$ page not in the kernel of $d^{10}$ are $[(b_{10})_m]$.
		\end{proof}
			
		\begin{theorem}
			 The cohomology $H^*(\Lambda(G_2/T^2);\mathbb{Z})$ is isomorphic as an algebra to $A/I$ given in Theorem~\ref{theorem:L(G2/T^2)} up to order of $2$-torsion.   
		\end{theorem}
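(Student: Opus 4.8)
The plan is to follow the template of the proof of Theorem~\ref{corollary:L(SU(3)/T^2)} (and of its $Sp(2)/T^2$ analogue): take the $E_\infty$-page description $A/I$ of Theorem~\ref{theorem:L(G2/T^2)} as input and resolve, in turn, the additive and then the multiplicative extension problems in the cohomology Leray--Serre spectral sequence of the evaluation fibration of $G_2/T^2$. First I would identify the torsion carried by the $E_\infty$-page. Inspecting $I$, the only sources of torsion are the relation $2t_3-\gamma_1^3$ of $H^*(G_2/T^2;\mathbb Z)$, the factor $2$ in the differential $d^2((a_2)_s)=2(a_2)_{s-1}(y_1(2\gamma_1+\gamma_2)+y_2(\gamma_1+2\gamma_2))$ for even $s$ (coming from the non-divided-power multiplication on $\langle(a_2)_m\rangle$ recorded in Proposition~\ref{prop:LoopG2}), and the factor $3$ in $d^{10}(b_{10})=3y_1t_3\gamma_1^2$. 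Computing a Gr\"obner basis of the ideal generated by the $\gamma_i$-, $y_i$- and $t_3$-relations together with the images of $d^2$ and $d^{10}$, as in the proof of Theorem~\ref{corollary:L(SU(3)/T^2)}, and applying Theorem~\ref{thm:GrobnerOver}, shows that every torsion class on $E_\infty$ has order a power of $2$ (bounded, at most $4$) or order $3$.

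Next I would resolve the additive extension problems one prime at a time. Over $\mathbb F_3$ the spectral sequence still collapses at the $E_{11}$-page and, since field coefficients annihilate all extension problems, exactly the comparison used in the proof of Theorem~\ref{corollary:L(SU(3)/T^2)} --- between $\dim H^i(\Lambda(G_2/T^2);\mathbb F_3)$ and the free rank of the integral $E_\infty$-page in total degree $i$ together with its $3$-torsion ranks in total degrees $i$ and $i+1$ --- forces every $3$-torsion class on $E_\infty$ to be present, with its order, in $H^*(\Lambda(G_2/T^2);\mathbb Z)$. Running the identical argument over $\mathbb F_2$ shows that every $2$-torsion class on $E_\infty$ also survives additively; but because the $\mathbb F_2$-Betti numbers cannot distinguish a $\mathbb Z/2$ summand from a $\mathbb Z/4$ summand this determines the $2$-torsion only up to order, which is precisely the qualification in the statement. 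Together these give the module isomorphism $H^*(\Lambda(G_2/T^2);\mathbb Z)\cong A/I$ up to the order of the $2$-torsion.

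Finally I would upgrade this to an algebra isomorphism. Since $A/I$ is then an associated graded algebra of $H^*(\Lambda(G_2/T^2);\mathbb Z)$ for the multiplication-length filtration, it suffices to rule out multiplicative extensions generator by generator, as in Theorem~\ref{corollary:L(SU(3)/T^2)}. The subalgebra generated by $\gamma_1,\gamma_2,t_3$ lies in the image of the evaluation map $H^*(G_2/T^2;\mathbb Z)\to H^*(\Lambda(G_2/T^2);\mathbb Z)$ and so carries no extension; the exterior subalgebra on $y_1,y_2$ is free graded-commutative; and the subalgebra generated by the $(b_{10})_m$ together with the $\gamma_i$ and $y_i$ is, up to scalars, a divided polynomial algebra of the same shape as the one on the $(x_4)_m$ treated for $SU(3)/T^2$, hence extension-free. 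For the remaining products --- those involving $y_i\gamma_j$, $y_it_3$ and the $(a_2)_m$-multiples --- a word-length and dimension count shows that any correction term must have the same multiplication length and already occurs among the listed generators, so no multiplicative extension arises.

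The step I expect to be the main obstacle is controlling the irregular family $\langle(a_2)_m\rangle$. Because $a_2^m=(m!/2^{\lfloor m/2\rfloor})(a_2)_m$ rather than $m!(a_2)_m$, the differentials on the $(a_2)_m$ acquire an extra factor of $2$ in even degrees, producing a two-parameter family of $2$-torsion classes whose interaction with the $2$-torsion from $2t_3-\gamma_1^3$ complicates both the Gr\"obner-basis bookkeeping and the multiplicative extension analysis, and is exactly what prevents the $\mathbb F_2$ comparison from deciding between $\mathbb Z/2$ and $\mathbb Z/4$. Sorting out the multiplicative relations among these $(a_2)_m$-classes is the delicate point.
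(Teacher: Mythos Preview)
Your approach to the additive extension problem---identify the torsion orders on $E_\infty$ via a Gr\"obner basis, then compare with the mod-$p$ spectral sequences for $p=2,3$---is exactly the paper's approach. The paper records the torsion orders as $2,3,6,12$ (equivalently, after primary decomposition, $2$-primary part of order at most $4$ and $3$-primary part of order $3$), and observes that the mod-$2$ and mod-$3$ comparisons settle everything except the $\mathbb Z/4$-versus-$\mathbb Z/2$ ambiguity at the prime $2$; this is the ``up to order of $2$-torsion'' caveat, just as you say.

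Where you diverge is in the multiplicative extension problem. The paper's proof stops at the module level and says nothing about the algebra structure, despite the word ``algebra'' in the statement; compare the $Sp(2)/T^2$ analogue, which is stated only as a module isomorphism, and the $SU(3)/T^2$ theorem, which establishes multiplicative structure only on a few explicit subalgebras. Your treatment of the subalgebras generated by $\gamma_1,\gamma_2,t_3$, by $y_1,y_2$, and by the $(b_{10})_m$ follows the $SU(3)$ template and is fine, but the final clause---that a ``word-length and dimension count'' disposes of all remaining products, in particular those involving the $(a_2)_m$ family---is not an argument the paper makes, and as written it is too sketchy to stand on its own (you correctly flag this family as the delicate point). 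In short: for what the paper actually proves, your plan coincides with it; the extra multiplicative claims go beyond the paper's proof and would need more work to be made rigorous.
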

			 
		\begin{proof}
			As in the proof of Theorem~\ref{corollary:L(SU(3)/T^2)} by considering a Gr\"obner basis of elements in $I$
			all torsion on the $E_\infty$ is of rank $2$, $3$, $6$ or $12$.
			Most module extension problems is resolved in the same way as previous cases by considering the module $2$ and modulo $3$ spectral sequences. However this does not determine torsion
			of rank $12$.
		\end{proof}

\bibliographystyle{amsplain}
\bibliography{Ref}

\end{document}